\documentclass[review,hidelinks,onefignum,onetabnum]{siamart220329}

\usepackage{booktabs}  
\usepackage{array}       
\usepackage{booktabs}    
\usepackage{caption}     
\usepackage{geometry}   
\usepackage{graphicx}
\usepackage{subcaption} 
\usepackage{rotating}
\usepackage{float} 
\usepackage[utf8]{inputenc}

\usepackage{lipsum}
\usepackage{amsfonts}
\usepackage{graphicx}
\usepackage{epstopdf}
\usepackage{algorithmic}
\usepackage{amsfonts}
\usepackage{amsmath,booktabs,ctable,threeparttable}
\usepackage{amssymb,amsfonts,boxedminipage}
\usepackage{algorithm}
\usepackage{algorithmic}

\ifpdf
  \DeclareGraphicsExtensions{.eps,.pdf,.png,.jpg}
\else
  \DeclareGraphicsExtensions{.eps}
\fi

\newsiamremark{remark}{Remark}
\newsiamremark{hypothesis}{Hypothesis}
\crefname{hypothesis}{Hypothesis}{Hypotheses}
\newsiamthm{claim}{Claim}
\nolinenumbers

\headers{A numerical analysis of sLS problems}{ZHONGXIAO JIA AND XINYUAN WAN}

\title{A Numerical Analysis of Sketched Linear Squares Problems and Stopping Criteria for Iterative Solvers\thanks{Submitted to the editors DATE.
\funding{Supported in part by the National Natural Science Foundation of China under grants
12171273 and 12571404.
}}}

\author{ZHONGXIAO JIA\thanks{Corresponding author. Department of Mathematical Sciences, Tsinghua University, 100084 Beijing, China
  (\email{ jiazx@tsinghua.edu.cn}).}
\and XINYUAN WAN\thanks{Department of Mathematical Sciences, Tsinghua University, 100084 Beijing, China
  (\email{wanxy22@mails.tsinghua.edu.cn}).}}

\usepackage{amsopn}

\ifpdf
\hypersetup{
  pdftitle={A Numerical Analysis of Sketched Linear Squares Problems and Stopping Criteria for Iterative Solvers},
  pdfauthor={ZHONGXIAO JIA AND XINYUAN WAN}
}
\fi

\begin{document}

\maketitle
\nolinenumbers

\begin{abstract}
Randomized subspace embedding methods have had a great impact on
the solution of a linear least
squares (LS) problem by reducing its row dimension, leading to a randomized or
sketched LS (sLS) problem, and use the solution of the sLS problem as an approximate
solution of the LS problem. This work makes a
numerical analysis on the sLS problem,
establishes its numerous theoretical properties, and
show their crucial roles on the most effective and efficient use of iterative solvers.
We first establish a compact bound on the
norm of the residual difference between the solutions of the LS and sLS problems,
which is the first key result towards understanding the rationale of the sLS problem.
Then from the perspective of backward errors, we prove that the solution of the
sLS problem is the
one of a certain perturbed LS problem with minimal backward error,
and quantify how the embedded quality affects the residuals,
solution errors, and the
relative residual norms of normal equations of the LS and sLS problems. These theoretical results enable us to propose new novel and reliable general-purpose
stopping criteria for iterative solvers for the
sLS problem, which dynamically monitor stabilization
patterns of iterative solvers for the LS problem itself
and terminate them at the earliest iteration.
Numerical experiments justify the theoretical bounds and demonstrate that the new stopping
criteria work reliably and result in a tremendous reduction in computational cost
without sacrificing attainable accuracy.
\end{abstract}
\begin{keywords}
  Sketched Least Squares,
  Subspace Embedding,
  Stopping Criteria,
  Backward Error Analysis,
  Iterative Solvers,
  Matrix Perturbation Theory,
  Probabilistic Guarantees
\end{keywords}

\begin{MSCcodes}
  65F20, 65F35
\end{MSCcodes}

\section{Introduction}
The solution of large-scale linear least squares (LS) problem:
\begin{equation}\label{eq:original problem}
    x_{ls}=\arg\min_x \|Ax - b\|,
\end{equation}
is a fundamental task in scientific computing, where $A \in \mathbb{R}^{m \times n}$ with
$m \gg n$ is of full column rank, $b \in \mathbb{R}^m$, and $\| \cdot \|$
is the Euclidean norm of a vector and the spectral norm of a matrix.  We denote by
$r_{ls}=Ax_{ls}-b$ the minimum residual of the problem \eqref{eq:original problem}.
The introduction and development of randomized algorithms that leverage dimensionality reduction techniques may accelerate traditional deterministic LS solvers substantially. Among these, subspace embedding methods 
have emerged as powerful tools for reducing the dimensionality of the LS problem, as shown below.

A matrix \(S \in \mathbb{R}^{d \times m}\) with $n\leq d<m$ is called a subspace embedding if, for all
\(Ax-b \in \mathcal{L}\subseteq \mathbb{R}^m\), it satisfies
\begin{equation}\label{eq:basic_inequality}
(1 - \epsilon) \|Ax - b\|^2 \leq \|S(Ax - b)\|^2 \leq (1 + \epsilon) \|Ax - b\|^2,
\end{equation}
with high probability, where \(\epsilon \in (0, 1)\) is the embedding error parameter and is called distortion \cite{4031351}, and $\mathcal{L}$ is a given subspace.
It states that
$S$ preserves the geometric structures of $\|Ax-b\|$ over $\mathcal{L}$ with the
distortion $\epsilon$ and
the set $\mathcal{L}$ of higher $m$-dimensional data is embedded into a set
of lower $d$-dimensional data in $\mathbb{R}^d$ with approximately pairwise
equal lengths. Such subspace embedding can be
used to transform the LS problem
\eqref{eq:original problem} into a low dimensional surrogate, i.e., the sLS
problem:
\begin{equation}\label{eq:sketched problem}
 x_s=\arg\min_x \|S(Ax - b)\|,
\end{equation}
in which we assume that $SA$ is of full column rank. If $n\leq d\ll m$, the row dimension $d$
of the sLS problem \eqref{eq:sketched problem} is greatly reduced relative to the row dimension $m$
of the LS problem \eqref{eq:original problem}.
Throughout, we denote by \(r_{s}=Ax_{s}-b\) the residual of the solution $x_s$ to the sLS problem.

As delineated in \cite{Martinsson_Tropp_2020}, upon obtaining the sLS problem \eqref{eq:sketched
problem}, the methodological approaches for its resolution can be categorized into three
classes: The first class is on numerical solution of the sLS problem
\eqref{eq:sketched problem}, referred to as
the sketch-and-solve paradigm \cite{doi:10.1137/15M1040827}, and uses $x_s$ as an approximate
solution of the LS problem \eqref{eq:original problem}. It
has recently been used to solve a sequence of the LS problems
produced by the GMRES method at iterations \cite{tropp2024}. The
second class
leverages \eqref{eq:sketched problem} to obtain a right preconditioner from \eqref{eq:sketched
problem} for the LS problem \eqref{eq:original problem} \cite{derezinski2025faster,epperly2024,epperly2025,rokhlin2008fast,shan2025optimal}, referred to as the
sketch-and-precondition paradigm. The third class uses
sketching to improve solution accuracy of the sketch-and-solve paradigm by repeatedly using
it to reduce the residual vector \cite{xu2024randomized}; there is a similar approach for the constrained LS problems in the convex
optimization \cite{pilanci2016iterative}.


In this paper we consider the sketch-and-solve paradigm.
Its key theoretical foundation, which originates in \cite{4031351} and has gained substantial developments in \cite{Martinsson_Tropp_2020,meier2024randomized,woodruff2014sketching}, is
the inequality
\begin{equation}\label{eq:basic foundation}
    \|r_{ls}\|\leq\|r_s\| \leq \left(\frac{1 + \epsilon}{1 - \epsilon}\right)^{\frac{1}{2}}\|r_{ls}\|.
\end{equation}
Based on \eqref{eq:basic foundation}, one usually exploits Krylov iterative solvers, e.g., the
commonly used LSQR and LSMR algorithms
\cite{doi:10.1137/1.9781611971484,fongsisc11}, to compute a $k$-step
approximate solution $x_k^s$ of the sLS problem \eqref{eq:sketched problem} at iteration $k$ and
use it as an approximate solution of the original LS problem \eqref{eq:original problem}.

A deficiency in the current literature lies in
its reliance solely on \eqref{eq:basic foundation} to
establish connections between problem \eqref{eq:original problem} and problem \eqref{eq:sketched problem}.
This approach provides only a partial perspective by exclusively focusing on residual magnitudes but
neglecting some other crucial aspects: the more important
directional difference $r_{ls}-r_s$ and more fundamental
intrinsic connections underlying these two problems, e.g., relationships between
$x_{ls}$ and $x_s$, iterative solutions of \eqref{eq:original problem} and \eqref{eq:sketched
problem} and their residuals, and so forth. In
fact, only two comparable but {\em not} small residuals $r_{ls}$ and $r_s$ in size
shown by \eqref{eq:basic foundation} tell us very little, as we shall see in this paper. Such inadequacy
has consequently led to theoretically unsound stopping criteria when applying iterative methods to solve the
sLS problem \eqref{eq:sketched problem}, particularly in terms of directional consistency and error
propagation mechanisms. Overall, the sketch-and-solve paradigm lacks the integration of randomization into
classical numerical algorithms and rigorous perturbation analysis on the sLS
problem \eqref{eq:sketched problem}, and does not establish necessary bounds on numerical error propagations.
Such absence leaves some critical gaps between the sLS problem and the original LS problem when
considering the rationale and working mechanism of the sketch-and-solve methods, particularly the reliable
design of general-purpose stopping criteria for iterative solvers. In this work, we are concerned
with the size of $\|r_{ls}-r_s\|$ in terms of $\epsilon$, and
make a comprehensive and rigorous backward error analysis and perturbation analysis. The results will
explicitly quantify how approximation errors of the sLS problem \eqref{eq:sketched
problem} interact with traditional numerical
uncertainties;when using the $x_s$ of \eqref{eq:sketched
problem} as an approximate solution of \eqref{eq:sketched
problem}, they enable us to design the most effective and efficient stopping criteria for iterative
solvers for \eqref{eq:sketched problem} that, unlike traditional ones,
terminate an
iterative solver at right time without sacrificing
ultimately attainable accuracy.

We shall first establish a compact bound for the residual error $\|r_{ls}-r_s\|$ in terms of $\epsilon$.
Then from the perspectives of backward error analysis and perturbation analysis, we investigate the
sLS problem \eqref{eq:sketched problem} for a given subspace embedding matrix
$S$ and $\epsilon$ satisfying
\eqref{eq:basic_inequality}. Backward error analysis is a cornerstone of numerical linear algebra, and it
gets insight into the computed solutions by quantifying the minimal perturbations to the input data that
make the computed solution an exact one of a certain perturbed \eqref{eq:original problem}.
Particularly, we revisit the standard backward error results, and distinguish and
and shed light on them. We then show
that the solution $x_s$ of the sLS problem \eqref{eq:sketched problem} is that of a
perturbed one of the LS problem \eqref{eq:original problem}, where the coefficient matrix $A$ is perturbed
to $A+E$ and the perturbation matrix $E$ satisfies $\|E\| \leq \epsilon \|A\|$. This result establishes a
key connection between the embedding quality and the backward error, and provides a guarantee on the
stability of $x_s$ as an approximate solution of
the LS problem \eqref{eq:original problem}. This bound not only deepens our theoretical understanding
but also provides a practical guidance for assessing the reliability of sketch-and-solve paradigm. In the
meantime, we consider a number of important quantities, including a further investigation on the size of
$\|r_{ls}-r_{s}\|$ and the solution error $\|x_{ls}-x_{s}\|$. We will
obtain compact upper bounds for them and have new findings.

While solving \eqref{eq:sketched problem} accelerates computation, it also necessitates a careful revisit
of stopping criteria for iterative solvers. We will establish sharp bounds for
$\|A^T r_s\| / (\|A\| \|r_s\|)$
and  $\|r_{ls}-r_s\|/\|r_{ls}\|$ in terms of $\epsilon$. Notice that $\|A^T r_s\| / (\|A\| \|r_s\|)$ is the
(relative) residual norm that the exact solution $x_s$
of \eqref{eq:sketched problem} as an approximate one of \eqref{eq:original problem} in the corresponding normal equation of \eqref{eq:original problem}. They describe
the achievable sizes of
these two quantities when the exact solution $x_s$ of \eqref{eq:sketched problem} is used as an approximate
solution of \eqref{eq:original problem}. Exploiting the fundamental and provable
property that $\|A^T r_s\| / (\|A\| \|r_s\|)$ and  $\|r_{ls}-r_s\|$ are no more than their
counterparts $\|A^T r_k^s\| /
(\|A\| \|r_k^s\|)$ and $\|r_{ls}-r_k^s\|$ where $x_s$ is replaced by any iterate
$x_k^s$ of \eqref{eq:sketched problem} and $r_k^s=Ax_k^s-b$,
we are able to design the most proper and efficient
general-purpose stopping criteria that terminate
iterations once $\|A^T r_k^s\| / (\|A\| \|r_k^s\|)$ or  $\|r_{ls}-r_k^s\|$
becomes almost unchanged for a few consecutive iteration steps $k$, thereby
avoiding unnecessary computation or over-solving
without sacrificing attainable accuracy. Specifically, by taking the convergence
properties of LSQR and LSMR into account, we will prove that it is preferable to
monitor $\|A^T r_k^s\| / (\|A\| \|r_k^s\|)$ in the LSMR algorithm and
$\|r_{ls}-r_k^s\|$ in the LSQR algorithm.
Solving the sLS problem \eqref{eq:sketched problem} using such stopping criteria delivers
a best possible approximate solution of \eqref{eq:original problem}, and further iterations do not improve it any longer. Strikingly, for a fairly small distortion $\epsilon$, say around $10^{-1}$, as we shall see,
$\|A^T r_s\| / (\|A\| \|r_s\|)$
and  $\|r_{ls}-r_s\|/\|r_{ls}\|$ is {\em several}
orders larger than $10^{-8}$,
a very common traditional stopping tolerance. Therefore, with new stopping tolerances, LSQR and LSMR
for \eqref{eq:sketched problem} terminate much more early than they do with traditional stopping criteria
and can thus be performed in the most effective and efficient way.

In summary, our contributions are both theoretical and practical. On the theoretical side, we provide a rigorous analysis on backward errors related to \eqref{eq:sketched problem},
the embedded solution $x_s$
and attainable relative residual norms of $x_s$
as approximate solutions of \eqref{eq:original problem} itself and its normal equation,
bridging the gap of modern randomized algorithms. On both the theoretical and
practical sides, we investigate carefully and propose
general-purpose stopping criteria that achieve the most efficient implementation of
iterative solvers without losing accuracy of computed solutions. We report numerical experiments
to confirm our theoretical results and demonstrate the reliability and efficiency of our new stopping criteria.

The paper is organized as follows. \Cref{sec: pre} reviews necessary preliminaries
and presents our main tasks. Our main theoretical
results are in \cref{sec:main}. In \cref{stopdesign}, we analyze the LSQR and LSMR iterates
and propose two new stopping criteria based on the results in \cref{sec:main}.
We report numerical experiments using LSQR and LSMR in \cref{sec:experiments},
and conclude the paper in \cref{sec:conclusions}.

\section{Preliminaries and our main tasks}
\label{sec: pre}

This section introduces the fundamental concepts and foundational tools necessary for understanding the subsequent analysis on the sLS problem \eqref{eq:sketched problem}. We begin with the problem formulation, followed by an overview of subspace embedding techniques, backward error analysis, and traditional
stopping criteria for iterative solvers.

\subsection{Subspace embedding theory}

A randomized subspace embedding matrix changes \eqref{eq:original problem} to \eqref{eq:sketched problem} by
projecting high-dimensional data into low dimensional ones while preserving geometric structures
with the distortion $\epsilon$. As a consequence, if the same direct solver is used, then we may solve
\eqref{eq:sketched problem} much more efficiently than \eqref{eq:original problem}. This section briefly
reviews three mainstream embedding matrices: SRHT, Gaussian matrices, and sparse embedding matrices.

\subsubsection{Subsampled randomized Hadamard transform (SRHT) matrix}
\label{subsec:srht}

A SRHT matrix $S$ (cf. \cite{doi:10.1137/120874540,doi:10.1137/090771806,meier2022randomizedalgorithmstikhonovregularization,
doi:10.1142/S1793536911000787,doi:10.1137/18M1201068,woodruff2014sketching,WOOLFE2008335}) is a composite of three components:
\begin{itemize}
    \item \textbf{Diagonal matrix} $D \in \mathbb{R}^{m \times m}$: Diagonal entries are $\pm 1$ with equal probability, i.e., Rademacher variables.
    \item \textbf{Walsh-Hadamard transform} $H \in \mathbb{R}^{m \times m}$: An orthogonal transform satisfying $H^TH = mI$ and each element of $H$ is $\pm1$.
    \item \textbf{Subsampling matrix} $P \in \mathbb{R}^{d \times m}$: Uniformly select $d$ rows of $A$ and scale them by $\sqrt{1/d}$.
\end{itemize}
$S$ is defined as
\begin{equation}
    S = P H D \in \mathbb{R}^{d \times m}.
\end{equation}
A probabilistic distortion guarantee is
\begin{equation}\label{eq:probabilistic bound}
    \mathbb{P}\left( \forall x \in \mathbb{R}^m,\ (1-\epsilon)\|x\|^2 \leq \|Sx\|^2 \leq (1+\epsilon)\|x\|^2 \right) \geq 1 - \delta,
\end{equation}
which is ensured, provided the embedding row number satisfies  \cite{doi:10.1137/120874540,woodruff2014sketching}:
\begin{equation}\label{eq: d1}
    d= O\left( \epsilon^{-2} (n+\log m ) \log n \right),
\end{equation}
where $\delta$ is a fixed small constant 
and denotes the failure probability.

When applying a SRHT embedding matrix to $A$,
the computation of the matrix multiplication $SA$ costs $\mathcal{O}(mn \log m)$ flops via the fast
Hadamard transform \cite{meier2022randomizedalgorithmstikhonovregularization}. Using a QR
factorization-based algorithm to solve \eqref{eq:sketched problem} costs additional
$O(\epsilon^{-2}n^3\log n)$ flops.

\subsubsection{Gaussian embedding matrix}
\label{subsec:gauss}

A Gaussian embedding matrix  $S \in \mathbb{R}^{d \times m}$ (cf. \cite{10.1145/276698.276876,doi:10.1137/18M1201068,woodruff2014sketching}) has entries independently sampled from $\mathcal{N}(0, 1/d)$. Its key properties include:
\begin{itemize}
    \item \textbf{Unbiasedness}: $\mathbb{E}[\|Sx\|^2] = \|x\|^2$ for any vector $x$.
    \item \textbf{High-probability isometry}: For any fixed subspace in $\mathbb{R}^m$, the embedded row number $d$ satisfies \cite{woodruff2014sketching}:
\end{itemize}
\begin{equation}\label{eq: d2}
    d= O\left( \epsilon^{-2} n \right).
\end{equation}

When applying a Gaussian embedding matrix to $A \in \mathbb{R}^{m \times n}$, the computation of the matrix multiplication $SA$ costs $O(mnd)$ flops \cite{meier2022randomizedalgorithmstikhonovregularization}.
Using a QR factorization-based algorithm to solve \eqref{eq:sketched problem} costs additional
$O(\epsilon^{-2} n^3 )$ flops.

\subsubsection{Sparse embedding matrix}
\label{subsec:sparse}

A sparse embedding matrix $S \in \mathbb{R}^{d \times m}$ (cf.  \cite{ACHLIOPTAS2003671,towards,10.1145/2488608.2488620,doi:10.1137/1.9781611974331.ch21,
10.1145/2488608.2488621,6686147,10.1007/978-3-662-43948-7_73,doi:10.1137/18M1201068,woodruff2014sketching})  contains only one  nonzero entry ($\pm 1$) per column, with its position chosen uniformly at random. Its key features include:
\begin{itemize}
    \item \textbf{Storage efficiency}: Only $O(m)$ space is required to store nonzero indices and signs.
    \item \textbf{Fast computation}: The computation of the matrix
    multiplication $SA$ costs $O(dn)$ flops.
\end{itemize}

It is known from \cite{doi:10.1137/1.9781611974331.ch21,woodruff2014sketching} that the
 embedding row number $d$ in inequality \eqref{eq:probabilistic bound} is
\begin{equation}\label{eq: d3}
    d= O\left( \epsilon^{-2} n^2  \right).
\end{equation}

When applying a sparse embedding matrix to $A \in \mathbb{R}^{m \times n}$, the computation of the matrix multiplication $SA$ costs $O(nd)$ flops \cite{meier2022randomizedalgorithmstikhonovregularization}.
Using a QR factorization-based algorithm to solve \eqref{eq:sketched problem} costs additional
$O(\epsilon^{-2} n^4 )$ flops.

\subsubsection{Comparison and discussion}
\label{subsec:compare}

SRHT balances computational efficiency and distortion control for large-scale dense datasets by leveraging
fast Fourier-like transforms and near-optimal subspace preservation. Gaussian embeddings, while achieving
minimum number \( O(\epsilon^{-2}n) \) of embedded rows incur \( O(mnd) \)
computational cost of $SA$. It suits for small to moderate dense LS problems. Sparse embeddings,
with their low-memory and constant non-zero entries per column, optimize for streaming environments and
memory-constrained architectures, but they hinge on critical trade-offs: problem
dimensionality, inherent data sparsity patterns, and permissible error-accuracy thresholds for downstream
tasks like LS or low-rank approximation. An important observation from \eqref{eq: d1}--\eqref{eq: d3} is that $\epsilon$ approximately reduces $\sqrt{2}$ times if $d$ is
doubled, a quite limited improvement with the doubled computational cost.

\subsection{Backward error analysis, residual analysis and others}

Backward error analysis is a fundamental tool in numerical linear algebra for assessing the stability of numerical algorithms. Regarding the exact solution $x_s$ of \eqref{eq:sketched problem} as an approximate solution of \eqref{eq:original problem}, we will investigate the minimal backward errors $\Delta A$ and $\Delta b$ such that $x_s$ solves
\[
x_s = \arg\min_x \|(A + \Delta A)x - (b + \Delta b)\|.
\]
The minimal normwise backward error for the LS problem \eqref{eq:original problem} is defined as \cite{doi:10.1137/1.9781611971484,higham2002accuracy}:
\[
\eta_{F}(x_s) = \min \left\{ \|(\Delta A,\theta\Delta b)\|_F \, \Big| \, x_s = \arg\min_x \|(A + \Delta A)x - (b + \Delta b)\| \right\},
\]
where $\|\cdot\|_F$ denotes the Frobenuis norm of a matrix and $0\leq \theta<\infty$ is a parameter
controlling. $\theta\rightarrow \infty$ means that only $A$ is perturbed, and $\theta=0$ means that
only $b$ is perturbed.

In the context of the sketch-and-solve paradigm, we will prove that the embedding process amounts to
taking $\Delta b=0$ and only
introducing an approximately minimal perturbation $\Delta A$ in the LS problem \eqref{eq:original problem}.
In this sense, the sLS problem \eqref{eq:sketched problem} is considered to be a perturbed one of the LS
problem \eqref{eq:original problem}, which will enable us to connect the backward errors,
the errors of solution $x_s$ and residual $r_s$ in terms of
the embedding quality $\epsilon$ and to establish a number of insightful results.

\subsection{Stopping criteria for iterative solvers} \label{tradstop}

For $n$ large, it is preferable to iteratively
solve \eqref{eq:original problem} by the LSQR and LSMR methods.
The LS problem \eqref{eq:original problem} is mathematically equivalent to
its normal equation
\begin{equation}\label{eq:normal equation}
    A^TAx=A^Tb.
\end{equation}
LSQR and LSMR terminate iterations when some prescribed tolerance
is satisfied. For the inconsistent \eqref{eq:original problem}, i.e., $r_{ls}\not=0$,
a general-purpose traditional
stopping criterion is
\begin{equation}\label{trad}
\frac{\|A^T r_k\|}{\|A\| \|r_k\|} \leq tol,
\end{equation}
where \(r_k = Ax_k - b\) denotes the residual vector of the
$k$-th iterate $x_k$ and $tol$ is a given tolerance \cite{10.1145/355984.355989}.
Once the above requirement is met for the first $k$, one has
already solved the problem and terminates iterations. Traditionally, one typically
takes $tol\in [10\epsilon_{\rm mach},\sqrt{\epsilon_{\rm mach}}]$
with $\epsilon_{\rm mach}$ being the machine precision.


Although the sLS problem \eqref{eq:sketched problem} has been regarded as a surrogate of the
LS problem \eqref{eq:original problem} since the publication of \cite{4031351},
their numerical treatments have not yet been considered carefully.
Remind that our ultimate mission is solution of the LS problem \eqref{eq:original problem}, and
solving the sLS problem \eqref{eq:sketched problem} is only an intermediate process of
solving \eqref{eq:original problem}.
We may miss smart computational shortcuts that might exist if we do not properly connect the two
mathematical frameworks.
Therefore, a vital question arises: how to appropriately select the tolerance
$tol$ in \eqref{trad} in order to obtain a best
possible approximate solution of \eqref{eq:original problem} when
iteratively solving \eqref{eq:sketched problem}. An excessively large \(tol\) may
prevent us from obtaining an optimal approximate solution of the LS problem
\eqref{eq:original problem} from iteratively solving
the sLS problem \eqref{eq:sketched problem}, while an overly small \(tol\)
leads to redundant iterations without improving accuracy of approximate solutions, causing unnecessary
computational wastes.

Alternatively, we will also
explore equally important general-purpose stopping criteria for
$\|r_k-r_{ls}\|$ based on its intimate connection to
$\|r_s-r_{ls}\|$, and show how to terminate iterative solvers for
the sLS problem \eqref{eq:sketched problem} at right time.

\section{Theoretical results}\label{sec:main}

\subsection{Residual analysis of the sLS problem}
To examine relationships between $r_s$ and $r_{ls}$, we first reiterate the following
result.

\begin{lemma}[\cite{woodruff2014sketching}]
\label{thm:old}
The sketched solution residual $r_s$ and the original residual $r_{ls}$ satisfy
 \eqref{eq:basic foundation}.
\end{lemma}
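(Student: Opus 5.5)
The plan is to use only the two optimality properties of $x_{ls}$ and $x_s$, together with the embedding inequality \eqref{eq:basic_inequality} applied on the subspace $\mathcal{L}=\mathrm{span}\{A,b\}=\{Ax-b\cdot t\}$ (dimension at most $n+1$). This subspace contains both $r_{ls}=Ax_{ls}-b$ and $r_s=Ax_s-b$. We assume the embedding event holds, so all statements are conditional on it, i.e., they hold with probability at least $1-\delta$.

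The lower bound $\|r_{ls}\|\le\|r_s\|$ is immediate, because $x_{ls}$ minimizes $\|Ax-b\|$ over all $x$, and $x_s$ is one particular $x$. No sketching property is needed for this half.

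For the upper bound, chain three inequalities. First, the left inequality in \eqref{eq:basic_inequality} applied to $r_s\in\mathcal{L}$ gives $(1-\epsilon)\|r_s\|^2\le\|Sr_s\|^2$. Second, the optimality of $x_s$ for \eqref{eq:sketched problem} gives $\|Sr_s\|^2=\|S(Ax_s-b)\|^2\le\|S(Ax_{ls}-b)\|^2=\|Sr_{ls}\|^2$. Third, the right inequality in \eqref{eq:basic_inequality} applied to $r_{ls}\in\mathcal{L}$ gives $\|Sr_{ls}\|^2\le(1+\epsilon)\|r_{ls}\|^2$. Combining these yields $(1-\epsilon)\|r_s\|^2\le(1+\epsilon)\|r_{ls}\|^2$. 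Since $\epsilon<1$, we may divide by $1-\epsilon$ and take square roots to obtain \eqref{eq:basic foundation}.

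The only point needing care is the legitimacy of applying \eqref{eq:basic_inequality} simultaneously to $r_s$ and $r_{ls}$. The vector $x_s$ depends on $S$, so a guarantee for fixed vectors would not suffice. The resolution is that \eqref{eq:basic_inequality} is a uniform guarantee over the whole subspace $\mathcal{L}$, which is fixed independently of $S$; on the success event it therefore holds for every vector of the form $Ax-b$, including the data-dependent $r_s$. I expect this to be the main subtlety, and I would state it explicitly.
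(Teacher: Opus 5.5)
Your proof is correct, and it is the standard argument: $(1-\epsilon)\|r_s\|^2\le\|Sr_s\|^2\le\|Sr_{ls}\|^2\le(1+\epsilon)\|r_{ls}\|^2$, with the optimality of $x_{ls}$ giving the lower bound; the paper gives no proof of its own and only cites Woodruff's survey, where this is the argument. You were right to state explicitly that the embedding guarantee has to hold uniformly on the fixed subspace $\mathrm{span}(A,b)$, because $x_s$ depends on $S$.
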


We now present the following result, which will be frequently used later.

\begin{theorem} [Geometric preservation]\label{thm:projected_error}
    For $ A \in \mathbb{R}^{m \times n} $ and $ b \in \mathbb{R}^m $,
    let $ S \in \mathbb{R}^{d \times m} $ be a randomized embedding matrix with
    the distortion $ \epsilon \in
    (0, 1) $ on the subspace $ \mathcal{L} = \mathrm{span}(A, b).$
    Then for any vector $ y \in \mathbb{R}^n $, it holds that
  \begin{equation}\label{eq:thm3.2}
      \left\| A^T (S^T S-I)(Ay-b) \right\| \leq \epsilon \|A\| \|Ay-b\|.
  \end{equation}
  \end{theorem}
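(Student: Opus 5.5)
The plan is to turn the two-sided distortion condition \eqref{eq:basic_inequality} on $\mathcal{L}=\mathrm{span}(A,b)$ into an operator-norm bound for the compressed matrix $P(S^TS-I)P$. Here $P$ is the orthogonal projector onto $\mathcal{L}$. Let $Q\in\mathbb{R}^{m\times p}$, $p\le n+1$, have orthonormal columns spanning $\mathcal{L}$, so that $P=QQ^T$. For every $u\in\mathcal{L}$, written $u=Qz$, the distortion condition reads
\[
(1-\epsilon)\|z\|^2\le z^TQ^TS^TSQz\le (1+\epsilon)\|z\|^2 ,
\]
since $\|Qz\|=\|z\|$. Hence the symmetric matrix $M=Q^T(S^TS-I)Q$ satisfies $|z^TMz|\le\epsilon\|z\|^2$ for all $z$. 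Because $M$ is symmetric, its spectral norm equals $\max_{\|z\|=1}|z^TMz|$, so $\|M\|\le\epsilon$.

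Next I write the target vector in terms of $M$. Take any $y\in\mathbb{R}^n$ and set $v=Ay-b$. Then $v\in\mathcal{L}$, and every column of $A$ lies in $\mathcal{L}$, so $v=QQ^Tv$ and $A=QQ^TA$, that is, $A^T=A^TQQ^T$. Consequently
\[
A^T(S^TS-I)(Ay-b)=A^TQ\,\bigl(Q^T(S^TS-I)Q\bigr)\,Q^Tv=(A^TQ)\,M\,(Q^Tv).
\]
Submultiplicativity then gives the bound
\[
\|A^T(S^TS-I)(Ay-b)\|\le \|A^TQ\|\,\|M\|\,\|Q^Tv\|\le \|A\|\cdot\epsilon\cdot\|Ay-b\|.
\]
The last step uses $\|A^TQ\|\le\|A\|$ and $\|Q^Tv\|=\|v\|$, which is \eqref{eq:thm3.2}.

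The main point requiring care is the passage from the quadratic-form (norm-preservation) statement to a bound on the bilinear quantity $u^T(S^TS-I)w$ for two \emph{different} vectors $u\in\mathrm{range}(A)$ and $w=Ay-b$. The symmetric-matrix argument handles this, and it relies essentially on the embedding holding on all of $\mathcal{L}$, not just on the set of residual vectors $Ay-b$. An alternative I would keep in reserve is polarization. For unit $u,w\in\mathcal{L}$ one has $4u^T(S^TS-I)w=\bigl(\|S(u+w)\|^2-\|u+w\|^2\bigr)-\bigl(\|S(u-w)\|^2-\|u-w\|^2\bigr)$, which is at most $\epsilon(\|u+w\|^2+\|u-w\|^2)=4\epsilon$ in absolute value. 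Either route yields the same constant $\epsilon$. If $A$ is rank deficient or $b\in\mathrm{range}(A)$, then $p$ is simply smaller, and nothing in the argument changes.
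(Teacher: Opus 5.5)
Your proof is correct, and it differs from the paper's at exactly the step that matters. The paper passes from $|x^T(S^TS-I)x|\le\epsilon\|x\|^2$ on $\mathcal{L}$ to the claim $\|(S^TS-I)x\|\le\epsilon\|x\|$ for every $x\in\mathcal{L}$, and then multiplies by $\|A^T\|$.

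That intermediate claim does not follow, and it is false in general. The supremum of the Rayleigh quotient over $\mathcal{L}$ equals $\|P(S^TS-I)P\|$, which is your $\|M\|$. The claim would instead need $\|(S^TS-I)P\|$, and the component $(I-P)S^TSx$ is not controlled by the embedding property at all. For example, $S=(1,\ c)\in\mathbb{R}^{1\times 2}$ is an exact isometry on $\mathcal{L}=\mathrm{span}(e_1)$, yet $\|(S^TS-I)e_1\|=|c|$. For Gaussian sketches with $d\ll m$ this component typically has norm of order $\sqrt{m/d}\,\|x\|$.

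The theorem itself still holds only because $\mathcal{R}(A)\subseteq\mathcal{L}$, so $A^T$ annihilates $\mathcal{L}^\perp$. That is precisely your identity $A^T=A^TQQ^T$, which lets you factor $A^T$ through $Q$ \emph{before} bounding anything. Everything then reduces to the compressed operator $M=Q^T(S^TS-I)Q$, which the quadratic-form condition does control.

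So your route gives a valid proof of the stated bound. It also shows the bound holds with $A^T$ replaced by $B^T$ for any $B$ with $\mathcal{R}(B)\subseteq\mathcal{L}$, which is the natural scope of the result. The paper's route, as written, rests on a stronger statement that is not true. Your polarization variant is an equally sound way to obtain the same bilinear bound $|u^T(S^TS-I)w|\le\epsilon\|u\|\|w\|$ for $u,w\in\mathcal{L}$.
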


  \begin{proof}
    By the definition of $ \epsilon $-subspace embedding, for all $ x \in \mathcal{L} $, we have
    $$
    (1 - \epsilon) \|x\|^2 \leq \|Sx\|^2 \leq (1 + \epsilon) \|x\|^2
    $$
    from which it follows that
    $$
    \left| x^T (S^T S - I) x \right| \leq \epsilon \|x\|^2.
    $$
    For any symmetric matrix $ M $, denote by $ \|M|_\mathcal{L}\| $ the spectral norm of $M$ restricted to $\mathcal{L}$. Then
    $$
    \| (S^T S - I)|_\mathcal{L} \| = \sup_{x \in \mathcal{L} \setminus \{0\}} \frac{|x^T (S^T S - I)x|}{\|x\|^2} \leq \epsilon.
    $$
    Thus, for any $ x \in \mathcal{L} $,
    $$
    \| (S^T S-I) x \| \leq \epsilon \|x\|.
    $$
    Replace $x$ by a residual vector $ r = Ay-b\in \mathcal{L}$.
    Then
    $$
    \| (S^T S-I) r \| \leq \epsilon \|r\|.
    $$
    Therefore,
    \begin{align*}
        \|A^T (S^T S-I) r\| \leq \|A^T\|\| (S^T S-I) r \| \leq \epsilon \|A\| \|r\|.
    \end{align*}
  \end{proof}

Exploiting Theorem~\ref{thm:projected_error}, we can establish one of the main results in this paper.

\begin{theorem}
\label{thm:new}
With the notation of \Cref{thm:projected_error}, suppose that $r_{ls}\not=0$. Then
\begin{equation}\label{eq:second_inequality}
\frac{\| r_{ls}-r_s\|}{ \|r_{ls}\|} \leq \sqrt{\frac{2\epsilon}{1 - \epsilon}}.
\end{equation}
\end{theorem}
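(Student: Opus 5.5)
The plan is to use the Pythagorean identity for the LS problem together with \eqref{eq:basic foundation} from Lemma~\ref{thm:old}. Since $x_{ls}$ minimizes $\|Ax-b\|$, its residual satisfies $A^T r_{ls}=0$, so $r_{ls}$ is orthogonal to $\mathrm{range}(A)$. The difference $r_s-r_{ls}=A(x_s-x_{ls})$ lies in $\mathrm{range}(A)$, and therefore
\begin{equation*}
\|r_s\|^2=\|r_{ls}+A(x_s-x_{ls})\|^2=\|r_{ls}\|^2+\|r_s-r_{ls}\|^2 .
\end{equation*}
This gives the exact identity $\|r_{ls}-r_s\|^2=\|r_s\|^2-\|r_{ls}\|^2$. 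The problem thus reduces to bounding how much larger $\|r_s\|$ can be than $\|r_{ls}\|$.

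Next I invoke \eqref{eq:basic foundation}, which gives $\|r_s\|^2\le \frac{1+\epsilon}{1-\epsilon}\|r_{ls}\|^2$. This holds because both $r_s$ and $r_{ls}$ lie in $\mathcal{L}=\mathrm{span}(A,b)$, where the embedding inequality \eqref{eq:basic_inequality} applies; Lemma~\ref{thm:old} supplies it directly. Substituting into the identity yields
\begin{equation*}
\frac{\|r_{ls}-r_s\|^2}{\|r_{ls}\|^2}\le \frac{1+\epsilon}{1-\epsilon}-1=\frac{2\epsilon}{1-\epsilon}.
\end{equation*}
Taking square roots gives \eqref{eq:second_inequality}. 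The hypothesis $r_{ls}\neq 0$ is needed only so that we can divide by $\|r_{ls}\|$.

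There is no serious obstacle. The one point to state carefully is that the orthogonality $A^Tr_{ls}=0$ is exact for the full LS problem, whereas the sketched normal equations only give $A^TS^TSr_s=0$. The argument must therefore use the Pythagorean split around $r_{ls}$, not around $r_s$. If one wanted a bound that does not cite \eqref{eq:basic foundation}, Theorem~\ref{thm:projected_error} could be used instead, but that route gives a bound in terms of $\kappa(A)$. I would stick with the clean route above.
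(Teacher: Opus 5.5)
Your proof is correct and is essentially the paper's argument. Both use $A^Tr_{ls}=0$ to obtain the identity $\|r_{ls}-r_s\|^2=\|r_s\|^2-\|r_{ls}\|^2$, and then apply \eqref{eq:basic foundation}. The only cosmetic difference is that the paper first shows $r_{ls}^Tr_s=\|r_{ls}\|^2$ and expands the square, whereas you state the Pythagorean split directly.
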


\begin{proof}
From the normal equation \eqref{eq:normal equation}, we obtain
\begin{align*}
    \|r_{ls}\|^2 &= \|(Ax_{ls}-b)^T (Ax_{ls}-b)\|\\
    &= \|(Ax_{ls}-b)^T b\|\\
    &=\|(Ax_{ls}-b)^T (Ax_{s}-b)\|\\
    &=\|r_{ls}^T r_s\|,
    \end{align*}
where the second and third equalities exploit the property that $A^Tr_{ls}=0$ by noticing
$Ax_{ls}$ and $Ax_s$ are in the $\mathcal{R}(A)$, the column space of $A$.
Therefore,
\begin{align*}
    \|r_{ls}-r_s\|^2&=\|r_s\|^2+\|r_{ls}\|^2-2\|r_s^T r_{ls}\|\\
    &=\|r_s\|^2-\|r_{ls}\|^2.
    \end{align*}
Making use of \eqref{eq:basic foundation}, we have
\begin{align*}
    \frac{\|r_{ls}-r_s\|}{ \|r_{ls}\|}&=\left(\frac{\|r_s\|^2}{\|r_{ls}\|^2}-1\right)^{\frac{1}{2}}\\
    &\leq \sqrt{\frac{2\epsilon}{1-\epsilon}}.
    \end{align*}
\end{proof}

\begin{remark}
    Theorem \ref{thm:new} is more insightful and informative than \Cref{thm:old}.
        Notice that the residual $r_{ls}$ is orthogonal to $\mathcal{R}(A)$.
        Bound \eqref{eq:second_inequality} ensures that $r_s$ aligns closely with the \textit{direction} of
        $r_{ls}$, not just its magnitude, so that $r_s$ adheres to the geometric requirement of orthogonality
        to $\mathcal{R}(A)$.
        In contrast, bound \eqref{eq:basic foundation} ignores directional deviations. In addition,
        bound~\eqref{eq:second_inequality} implies that $r_s$ is a meaningful approximation
        to $r_{ls}$, i.e., its relative error is smaller than one, only when $\epsilon<\frac{1}{3}$.

Bound \eqref{eq:second_inequality} will enable us to analyze the error of $x_s$, backward stability and practical error control simultaneously, as we shall show in the sequel.
\end{remark}

\subsection{Residual analysis of normal equations of the LS and sLS problems}

The heart of randomized subspace embedding techniques
lies in their possible capabilities to preserve geometric fidelity across all potential solutions
simultaneously – a universal guarantee that forms the foundation of our first theorem.
To rigorously analyze properties and capabilities of the sketch-and-solve paradigm, we first establish two
sharp bounds for the residual errors of normal equations of \eqref{eq:original problem} and
\eqref{eq:sketched problem} when considering $x_s$ and $x_{ls}$ to be approximate solutions of
\eqref{eq:original problem} and
\eqref{eq:sketched problem}, respectively.

\begin{theorem}[Relative residual bounds]\label{thm:embedded_stopping}
    With the notation of \Cref{thm:projected_error}, we have
\begin{equation}\label{eq:normal_residual_upper_bound}
\frac{\|A^T r_s\|}{\|A\| \|r_s\|} \leq \epsilon
\end{equation}
and
\begin{equation}\label{eq:corr_normal_residual_upper_bound}
\frac{\|(SA)^T (Sr_{ls})\|}{\|SA\| \|Sr_{ls}\|} \leq \frac{\epsilon}{1-\epsilon}.
\end{equation}
\end{theorem}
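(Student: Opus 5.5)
For \eqref{eq:normal_residual_upper_bound}, I would start from the normal equation of the sLS problem \eqref{eq:sketched problem}. Since $x_s$ minimizes $\|S(Ax-b)\|$, it satisfies $(SA)^T S r_s = A^T S^T S r_s = 0$. Hence
\[
A^T r_s = A^T r_s - A^T S^T S r_s = -A^T (S^TS - I) r_s .
\]
Because $r_s = Ax_s - b$ is a residual vector lying in $\mathcal{L}=\mathrm{span}(A,b)$, \Cref{thm:projected_error} with $y=x_s$ gives $\|A^T r_s\| \le \epsilon\|A\|\|r_s\|$. Dividing by $\|A\|\|r_s\|$ yields the first bound. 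This part is immediate.

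For \eqref{eq:corr_normal_residual_upper_bound} I would argue symmetrically. Since $A^T r_{ls}=0$,
\[
(SA)^T S r_{ls} = A^T S^TS r_{ls} = A^T (S^TS-I) r_{ls}.
\]
Applying \Cref{thm:projected_error} with $y=x_{ls}$ bounds the numerator by $\epsilon\|A\|\|r_{ls}\|$.

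It then remains to bound the denominator from below, i.e., to show $\|SA\|\,\|Sr_{ls}\| \ge (1-\epsilon)\|A\|\|r_{ls}\|$. For the residual factor, the embedding property \eqref{eq:basic_inequality} gives $\|Sr_{ls}\| \ge \sqrt{1-\epsilon}\,\|r_{ls}\|$. For the matrix factor, let $v$ be a unit right singular vector of $A$ with $\|Av\|=\|A\|$. Since $Av\in\mathcal{L}$, we get $\|SA\| \ge \|SAv\| \ge \sqrt{1-\epsilon}\,\|A\|$. Multiplying these two lower bounds gives exactly $(1-\epsilon)\|A\|\|r_{ls}\|$, and the ratio is at most $\epsilon/(1-\epsilon)$.

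The only delicate point is the lower bound for $\|SA\|$. It requires the embedding inequality to hold on all of $\mathcal{R}(A)\subseteq\mathcal{L}$, not only on affine residuals $Ax-b$. I would justify this by taking $\mathcal{L}$ to be the linear span of the columns of $A$ and $b$, as in \Cref{thm:projected_error}, so that it contains $Av$. The two square-root factors combining to $1-\epsilon$ is what produces the stated constant.
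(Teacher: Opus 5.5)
Your proposal is correct and follows the paper's proof essentially step for step. The first bound comes from the sLS normal equation $A^TS^TSr_s=0$ together with \Cref{thm:projected_error}, and the second from $A^Tr_{ls}=0$, \Cref{thm:projected_error}, and the lower bounds $\|SA\|^2\geq(1-\epsilon)\|A\|^2$ and $\|Sr_{ls}\|^2\geq(1-\epsilon)\|r_{ls}\|^2$; your top-singular-vector argument just spells out the bound on $\|SA\|$ that the paper reads off directly from \eqref{eq:basic_inequality}.
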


\begin{proof}
By the normal equation of \eqref{eq:sketched problem} and the definition of $r_s$, we have
\begin{equation*}
A^T S^T S r_s = 0.
\end{equation*}
Therefore,
$$
\|A^T (I - S^T S)r_s\| = \|A^T r_s\|.
$$
By Theorem~\ref{thm:projected_error}, we have
$$ \|A^T r_s\| \leq \epsilon \|A\| \|r_s\|,$$
which proves \eqref{eq:normal_residual_upper_bound}.

Exploiting $A^Tr_{ls}=0$, we obtain
\begin{equation*}
    (SA)^T (Sr_{ls})=A^T (S^T S-I)r_{ls}.
\end{equation*}
From Theorem~\ref{thm:projected_error}, we obtain
\begin{equation}\label{eq:temp2}
    \|(SA)^T (Sr_{ls})\|\leq\epsilon\|A\|\|r_{ls}\|.
\end{equation}
From \eqref{eq:basic_inequality}, we have
\begin{equation}\label{lowerb}
    \|SA\|^2\geq(1-\epsilon)\|A\|^2,\quad \|Sr_{ls}\|^2\geq(1-\epsilon)\|r_{ls}\|^2.
\end{equation}
Substituting them into the right-hand side of \eqref{eq:temp2} yields
\begin{equation*}
    \|(SA)^T (Sr_{ls})\|\leq\frac{\epsilon}{1-\epsilon}\|SA\|\|Sr_{ls}\|,
\end{equation*}
which establishes \eqref{eq:corr_normal_residual_upper_bound}.
\end{proof}

\begin{remark}
This theorem quantifies how embedding matrices
control the alignments between the residual $r_s$ of the sketched solution $x_s$
and the column space $\mathcal{R}(A)$ of $A$ as well as between $Sr_{ls}$
and the sketched column space $\mathcal{R}(SA)$.
Ratio \eqref{eq:normal_residual_upper_bound} approximately measures the cosine of the angle between $r_s$ and $\mathcal{R}(A)$. For the sLS problem \eqref{eq:sketched problem}, ratio
\eqref{eq:corr_normal_residual_upper_bound} analogously measures the cosine of the angle between $Sr_{ls}$ and $\mathcal{R}(SA)$, where we consider the solution $x_{ls}$ of the original LS problem \eqref{eq:original problem} as an approximate solution of the sLS problem \eqref{eq:sketched problem}.
Bound \eqref{eq:normal_residual_upper_bound} ensures near-orthogonality between $r_s$ and $\mathcal{R}(A)$ within an $\epsilon$-tolerance, while bound \eqref{eq:corr_normal_residual_upper_bound} ensures near-orthogonality between $Sr_{ls}$ and $\mathcal{R}(SA)$ within a $\frac{\epsilon}{1-\epsilon}$-tolerance,
a most conservative estimate, as is seen from \eqref{lowerb}. The denominator $1-\epsilon$ in \eqref{eq:corr_normal_residual_upper_bound} is, on average, one.
\end{remark}

\subsection{Backward error analysis of the sLS problem}
Our analysis is based on the characterization of backward stability of LS
solutions \cite[ p.~392, Theorem 20.5]{higham2002accuracy}.

\begin{lemma}[\cite{higham2002accuracy}]\label{thm:higham}
    Let $\bar{x}$ be an approximate solution of the LS problem \eqref{eq:original problem}. The normwise backward error
    \begin{equation}
    \eta_F(\bar{x})=\min\{\|(E,\theta e)\|_F \ |\  \|(A+E)x-(b+e)\|=\min\}
    \end{equation}
    is given by
    \begin{equation}\label{2}
    \eta_F(\bar{x})=\left\{\begin{array}{lr}
    \gamma\sqrt{\mu},& \lambda_*\geq 0,\\
    (\gamma^2\mu+\lambda_*)^{1/2}, & \lambda_*<0,
    \end{array}
    \right.
    \end{equation}
    where $\bar{r}=A\bar{x}-b, \ \gamma=\|\bar{r}\|/\|\bar{x}\|$, and
    \begin{equation}
    \lambda_*=\lambda_{\min}\left(AA^T-\mu\frac{\bar{r}\bar{r}^T}{\|\bar{x}\|^2}\right),\ \
    \mu=\frac{\theta^2\|\bar{x}\|^2}{1+\theta^2\|\bar{x}\|^2}<1.
    \end{equation}
    Particularly, $\theta\rightarrow\infty$ means that only $A$ is perturbed and $b$ remains unchanged, so that $\mu=1$.
    \end{lemma}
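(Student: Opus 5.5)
Since this is the Waldén--Karlson--Sun formula quoted in \cite{higham2002accuracy}, we could simply cite it. To reproduce it, I would reduce the backward error problem to a minimum-norm problem with two linear constraints, solved in closed form for each fixed candidate residual, and then optimize over the residual.

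\textbf{Step 1: reformulate optimality.} Write $\tilde r=(A+E)\bar x-(b+e)$ for the residual of the perturbed problem. Then $\bar x$ minimizes $\|(A+E)x-(b+e)\|$ if and only if $(A+E)^T\tilde r=0$. Treat $r:=\tilde r$ as a free parameter. The feasible perturbations are exactly those with
\begin{equation*}
E\bar x-e=r-\bar r,\qquad E^Tr=-A^Tr .
\end{equation*}
Set $F=(E,\theta e)$ and $z=(\bar x^T,-\theta^{-1})^T$. Note that $\|z\|^2=\|\bar x\|^2+\theta^{-2}=\|\bar x\|^2/\mu$. The first constraint becomes $Fz=r-\bar r$, and the second constrains only the first $n$ columns of $F$ through $r^TF$.

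\textbf{Step 2: fixed $r$.} For fixed $r$, the minimum of $\|F\|_F$ over the affine set defined by these two constraints has an explicit closed form. I would obtain it by splitting $F$ along the orthogonal projectors $rr^T/\|r\|^2$ on the left and $zz^T/\|z\|^2$ on the right, and checking that the two constraints are compatible. The consistency condition is $r^T(r-\bar r)=r^TF z=-r^TA\bar x$, i.e.\ $r^Tr=r^T(A\bar x-\bar r)=r^Tb$. The result is an expression of the form $\|r-\bar r\|^2\mu/\|\bar x\|^2$ plus a term $\|A^Tr\|^2/\|r\|^2$, corrected for the overlap of the two projections. The case $r=0$ is allowed and gives $F=-\bar r z^T/\|z\|^2$, with
\begin{equation*}
\|F\|_F^2=\|\bar r\|^2/\|z\|^2=\mu\gamma^2 .
\end{equation*}

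\textbf{Step 3: optimize over $r$.} After simplification, I expect the remaining minimization over $r$ to become a Rayleigh quotient,
\begin{equation*}
\mu\gamma^2+\min_{v}\frac{v^T\bigl(AA^T-\mu\bar r\bar r^T/\|\bar x\|^2\bigr)v}{v^Tv},
\end{equation*}
restricted to those directions that actually decrease the cost, with $r=0$ always available as a fallback. If $\lambda_*\ge 0$, no nonzero $r$ improves on $r=0$, so $\eta_F=\gamma\sqrt\mu$. If $\lambda_*<0$, choosing $r$ along an eigenvector for $\lambda_*$ gives $\eta_F^2=\gamma^2\mu+\lambda_*$. In the limit $\theta\to\infty$ we have $\mu\to1$, which recovers the case where only $A$ is perturbed.

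\textbf{Main obstacle.} The hardest part is the algebra in Steps 2--3: making the fixed-$r$ minimum explicit and showing that it collapses exactly to $\lambda_{\min}$ of the rank-one-modified matrix $AA^T-\mu\bar r\bar r^T/\|\bar x\|^2$. I would handle this by writing $r=\alpha\bar r+w$, using the consistency condition to eliminate $\alpha$, and then completing squares.
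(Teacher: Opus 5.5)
The paper gives no proof of this lemma; it quotes Higham's Theorem 20.5 (the Waldén--Karlson--Sun formula), so your opening sentence is exactly the paper's route. Your reconstruction has the right architecture: parametrize by the perturbed residual $r$, solve a minimum-norm problem for fixed $r$, then optimize over $r$. However, Step 2 contains a genuine error.

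The constraint $E^Tr=-A^Tr$ fixes only $r^TE$, not $r^Te$. Hence $r^TFz=-r^TA\bar x-r^Te$, not $-r^TA\bar x$. For $\theta<\infty$ there is therefore no consistency condition on $r$ at all. The two constraints are compatible for every $r$, and compatibility merely forces $r^Te=-r^T(r+b)$. What you wrote holds only in the limit $\theta\to\infty$, where $e=0$ is forced. Even then the sign is off: since $\bar r-A\bar x=-b$, one gets $\|r\|^2=-r^Tb$. If you discard the $r$ that violate your condition, you minimize over a strict subset of the feasible perturbations and obtain only an upper bound on $\eta_F$. The matching lower bound is what is missing.

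The repair is short, and it also removes your ``main obstacle''. For $r\neq 0$ let $u=r/\|r\|$ and split $\|F\|_F^2=\|u^TF\|^2+\|(I-uu^T)F\|_F^2$. The constraints decouple, and for fixed $r$ the minimum of $\|F\|_F^2$ equals
\[
\|A^Tu\|^2+\theta^2\bigl(\|r\|+u^Tb\bigr)^2+\frac{\|(I-uu^T)\bar r\|^2}{\|z\|^2}
=\mu\gamma^2+u^TMu+\theta^2\bigl(\|r\|+u^Tb\bigr)^2,
\qquad M=AA^T-\mu\frac{\bar r\bar r^T}{\|\bar x\|^2}.
\]
The $\theta^2$-term is the cost of $e$, and it matters in two ways:
\begin{itemize}
\item Its nonnegativity gives the lower bound $\eta_F^2\ge\min\{\mu\gamma^2,\mu\gamma^2+\lambda_*\}$.
\item It vanishes exactly when $\|r\|=-u^Tb$. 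So your ``consistency condition'' is really the optimal choice of $\|r\|$, not a constraint.
\end{itemize}
The remaining cost depends on $r$ only through the Rayleigh quotient in the direction $u$. No splitting $r=\alpha\bar r+w$ and no completing of squares is needed.

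For attainment you still have to handle directions with $u^Tb=0$, for which $\|r\|=-u^Tb$ is impossible with $r\neq 0$. For such $u$ you have $u^T\bar r=(A^Tu)^T\bar x$. Cauchy--Schwarz and $\mu\le 1$ then give $u^TMu\ge(1-\mu)\|A^Tu\|^2\ge 0$. Hence, if $\lambda_*<0$, its unit eigenvector $u$ satisfies $u^Tb\neq 0$. Replace $u$ by $-u$ if necessary so that $u^Tb<0$; then $r=-(u^Tb)u$ attains $\mu\gamma^2+\lambda_*$. The choice $r=0$ always attains $\mu\gamma^2$, which completes the formula.
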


Below we investigate this lemma by distinguishing $r_{ls}\not=0$ and $r_{ls}=0$, which
correspond to the inconsistent and
consistent \eqref{eq:original problem}, respectively,
and prove what values $\eta_F(\bar{x})$ takes.

\begin{theorem}\label{thm:teacher}
With the notation of \cref{thm:higham}, suppose that $A$ is of full column rank and $r_{ls}\not=0$.
Then $\lambda_*<0$, i.e.,
\begin{equation*}
\eta_F(\bar{x})= (\gamma^2\mu+\lambda_*)^{1/2},
\end{equation*}
and
\begin{equation}\label{5}
\eta_F(\bar{x})\leq \frac{\|A^T\bar{r}\|}{\|\bar{r}\|}.
\end{equation}
If $r_{ls}=0$ and $\bar{x}$ is sufficiently
close to $x_{ls}$, then $\lambda_*\geq 0$, i.e.,
    \begin{equation}\label{eq: extra_back}
        \eta_F(\bar{x})= \gamma\sqrt{\mu}.
    \end{equation}
\end{theorem}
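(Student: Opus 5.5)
The plan is to work directly with the symmetric $m\times m$ matrix
\[
M=AA^T-\mu\frac{\bar r\bar r^T}{\|\bar x\|^2}
\]
from \cref{thm:higham}, and to determine the sign of $\lambda_*=\lambda_{\min}(M)$ by evaluating Rayleigh quotients at well-chosen test vectors. The key structural fact is that $m>n$, so $AA^T$ is singular. Its null space is $\mathcal{N}(A^T)=\mathcal{R}(A)^\perp$, on which $M$ acts only through the rank-one term. Throughout I assume $\bar x\neq 0$ and $\theta>0$, so that $\mu>0$; otherwise the statement degenerates, and I will note this caveat.

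\emph{Inconsistent case: showing $\lambda_*<0$.} Take $z=r_{ls}\neq 0$, which lies in $\mathcal{N}(A^T)$. Then
\[
z^TMz=-\mu\frac{(r_{ls}^T\bar r)^2}{\|\bar x\|^2}.
\]
As in the proof of \Cref{thm:new}, $A^Tr_{ls}=0$ gives $r_{ls}^T\bar r=r_{ls}^T(A\bar x-b)=-r_{ls}^Tb=\|r_{ls}\|^2$, using $r_{ls}^Tr_{ls}=r_{ls}^T(Ax_{ls}-b)=-r_{ls}^Tb$. Hence
\[
z^TMz=-\mu\frac{\|r_{ls}\|^4}{\|\bar x\|^2}<0,
\]
so $\lambda_*<0$, and the second branch of \eqref{2} applies.

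\emph{Inconsistent case: the bound \eqref{5}.} Use the test vector $z=\bar r/\|\bar r\|$. This gives
\[
z^TMz=\frac{\|A^T\bar r\|^2}{\|\bar r\|^2}-\mu\frac{\|\bar r\|^2}{\|\bar x\|^2}=\frac{\|A^T\bar r\|^2}{\|\bar r\|^2}-\gamma^2\mu.
\]
Since $\lambda_*\le z^TMz$, it follows that $\eta_F(\bar x)^2=\gamma^2\mu+\lambda_*\le\|A^T\bar r\|^2/\|\bar r\|^2$, which is \eqref{5}. Note that $\bar r\neq0$ automatically here, because $r_{ls}^T\bar r=\|r_{ls}\|^2\neq 0$.

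\emph{Consistent case $r_{ls}=0$.} Now $b=Ax_{ls}$, so $\bar r=A(\bar x-x_{ls})\in\mathcal{R}(A)$. Both $AA^T$ and $\bar r\bar r^T$ map $\mathcal{R}(A)$ into itself and vanish on $\mathcal{N}(A^T)$. Hence $M$ is block diagonal with respect to $\mathbb{R}^m=\mathcal{R}(A)\oplus\mathcal{N}(A^T)$, with zero block on $\mathcal{N}(A^T)$, which is nontrivial since $m>n$. Therefore
\[
\lambda_*=\min\bigl\{0,\ \lambda_{\min}(M|_{\mathcal{R}(A)})\bigr\}.
\]
On $\mathcal{R}(A)$, $AA^T$ has smallest eigenvalue $\sigma_n(A)^2>0$ by full column rank. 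By Weyl's inequality,
\[
\lambda_{\min}(M|_{\mathcal{R}(A)})\ge\sigma_n^2-\mu\frac{\|\bar r\|^2}{\|\bar x\|^2}\ge\sigma_n^2-\frac{\|A\|^2\|\bar x-x_{ls}\|^2}{\|\bar x\|^2}.
\]
Assuming $x_{ls}\neq0$ (i.e.\ $b\neq0$), the right-hand side is positive once $\|\bar x-x_{ls}\|$ is small enough. For instance, $\|\bar x-x_{ls}\|\le\|x_{ls}\|/2$ together with $\|\bar x-x_{ls}\|<\sigma_n\|x_{ls}\|/(2\|A\|)$ suffices, since $\|\bar x\|\ge\|x_{ls}\|/2$. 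Then $\lambda_*=0\ge0$, and \eqref{eq: extra_back} follows from the first branch of \eqref{2}.

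\emph{Main obstacle.} I expect the main difficulty to be the consistent case: making "sufficiently close" quantitative, and justifying the invariant-subspace decomposition so that the zero eigenvalue from $\mathcal{N}(A^T)$ cannot be pushed negative. The inconsistent case reduces to the two Rayleigh-quotient computations above.
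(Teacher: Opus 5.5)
Your proof is correct. For the inconsistent case it is the paper's argument: Rayleigh quotients of $M=AA^T-\mu\bar r\bar r^T/\|\bar x\|^2$ at a unit null vector of $A^T$ and at $\bar r/\|\bar r\|$. You are slightly more explicit, since you name $y=r_{ls}/\|r_{ls}\|$ and check $r_{ls}^T\bar r=\|r_{ls}\|^2\neq0$, whereas the paper only asserts that some unit $y\in\mathcal{N}(A^T)$ with $b^Ty\neq0$ exists.

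The consistent case is where you genuinely diverge. The paper neither splits $\mathbb{R}^m$ nor uses Weyl. It writes $\bar r=Au$ with $u=\bar x-x_{ls}$ and factors
\[
M=A\Bigl(I-\mu\frac{uu^T}{\|\bar x\|^2}\Bigr)A^T .
\]
By congruence, $M$ is positive semidefinite as soon as the middle factor is, i.e.\ as soon as $\mu\|u\|^2\le\|\bar x\|^2$. Since $\mu<1$, this holds whenever $\|\bar x-x_{ls}\|\le\|\bar x\|$, for instance whenever $\|\bar x-x_{ls}\|\le\|x_{ls}\|/2$. It even holds for every $\bar x\neq0$ when $x_{ls}=0$. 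So the factorization gives a closeness radius independent of $\kappa(A)$ and needs no assumption $b\neq0$.

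Your Weyl step bounds the rank-one term only by its norm. That discards the fact that $\bar r=Au$ is shaped by $A$, which is why your radius carries the factor $\sigma_n/\|A\|=1/\kappa(A)$ and why you must exclude $x_{ls}=0$. In exchange, your block decomposition shows $\lambda_*=0$ exactly, since $M$ vanishes on the nontrivial $\mathcal{N}(A^T)$. Your caveats $\theta>0$ and $\bar x\neq0$ also expose hypotheses the paper uses tacitly: with $\theta=0$ one has $\mu=0$ and $\lambda_*=\lambda_{\min}(AA^T)=0$, so the claimed strict inequality $\lambda_*<0$ fails.
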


\begin{proof}
        For $r_{ls}\not=0$ and $m>n$,  there exists $y$ $\in \mathbb{R}^m$ with $\|y\|=1$ such that
        \begin{equation*}
            A^T y=0, \quad \bar{r}^T y=(A\bar{x}-b)^T y =-b^T y\neq 0.
        \end{equation*}
    Based on the basic property that the smallest eigenvalue of a real symmetric matrix
    is its smallest Rayleigh quotient, we obtain
        \begin{equation*}
            \lambda_*\leq y^T\left(AA^T-\mu\frac{\bar{r}\bar{r}^T}{\|\bar{x}\|^2}\right)y
            =-\mu\frac{(\bar{r}^T y)^2}{\|\bar{x}\|^2}<0.
        \end{equation*}
    Since
    \begin{eqnarray*}
    \lambda_* &\leq & \frac{\bar{r}^T\left(AA^T-\mu\frac{\bar{r}\bar{r}^T}{\|\bar{x}\|^2}\right)\bar{r}}{\|\bar{r}\|^2}\\
    &=& \frac{\|A^T\bar{r}\|^2}{\|\bar{r}\|^2}-\mu\frac{\|\bar{r}\|^2}{\|\bar{x}\|^2}\\
    &=& \frac{\|A^T\bar{r}\|^2}{\|\bar{r}\|^2}-\gamma^2\mu,
    \end{eqnarray*}
    we obtain
    \begin{equation*}
    \eta_F(\bar{x})=(\gamma^2\mu+\lambda_*)^{1/2}\leq \frac{\|A^T\bar{r}\|}{\|\bar{r}\|},
    \end{equation*}
   proving \eqref{5}.

For $r_{ls}=0$, we have $ b=Ax_{ls} $. Therefore,
    \begin{equation*}
        AA^T-\mu\frac{\bar{r}\bar{r}^T}{\|\bar{x}\|^2}=
        A\left(I-\mu\frac{(\bar{x}-x_{ls})(\bar{x}-x_{ls})^T}{\|\bar{x}\|^2}\right)A^T.
    \end{equation*}
    For $\bar{x}$ sufficiently close to $x_{ls}$, since $\mu<1$, the matrix $I-\mu\frac{(\bar{x}-x_{ls})(\bar{x}-x_{ls})^T}{\|\bar{x}\|^2}$ must be
    symmetric positive semi-definite, so that
     \begin{equation*}
       A\left(I-\mu\frac{(\bar{x}-x_{ls})(\bar{x}-x_{ls})^T}{\|\bar{x}\|^2}\right)A^T
    \end{equation*}
    is positive semi-definite too. Thus, we have $\lambda_*\geq 0$, i.e., \eqref{eq: extra_back} holds.
    \end{proof}

\begin{remark}\label{rem9}
    By $A^Tr_{ls}=0$,
    it is straightforward to verify that $r_{ls}/\|r_{ls}\|$ is a
    unit-length eigenvector of $AA^T-\mu\frac{r_{ls}r_{ls}^T}{\|x_{ls}\|^2}$ corresponding
    to its smallest eigenvalue $-\mu\|r_{ls}\|^2/\|x_{ls}\|^2<0$.
   For $\bar{x}$ sufficiently close to $ x_{ls}$,  the residual
   $\bar{r}$ must be sufficiently close to $r_{ls}$ too.
   As a result, by continuity arguments, $\lambda_{\min}\left(AA^T-\mu\frac{\bar{r}\bar{r}^T}{\|\bar{x}\|^2}\right)$ must be negative.
        Bound \eqref{5} shows that $\eta_F(\bar{x})\rightarrow 0$ as $\bar{x}\rightarrow
    x_{ls}$. Moreover, this bound is also sharp since $\bar{r}\rightarrow r_{ls}$ and $\bar{r}/\|\bar{r}\|$ is naturally an excellent approximation
    to the eigenvector of $AA^T-\mu \frac{\bar{r}\bar{r}^T}{\|\bar{x}\|^2}$ corresponding to its smallest eigenvalue $\lambda_*$.
As a matter of fact, we find out that bound \eqref{5} is the same order small as the lower bound for $\eta_F(\bar{x}$) in \cite[p.~279]{https://doi.org/10.1002/nla.1680020308} before Theorem 3.2 in it
    and both tend to zero as fast as $\|A^T\bar{r}\|$. These two arguments together
    show that bound \eqref{5} is indeed sharp. Therefore, we
   can use $\|A^T\bar{r}\|/\|\bar{r}\|$ to accurately estimate the minimal backward error $\eta_F(\bar x)$
   for $r_{ls}\not=0$ once $\bar{r}$ is close to $r_{ls}$.
    \end{remark}

    \begin{remark}
        For $r_{ls}=0$, the condition that $\bar{x}$ is sufficiently close to $x_{ls}$ is necessary for
        \eqref{eq: extra_back} to hold. Otherwise, it is likely that $\lambda_*<0$,
        so that  \eqref{eq: extra_back} fails to hold. Here is an example:
        Take an $\bar{x}$ satisfying $\|\bar{x}\|^2<\frac{\mu}{1+\mu}\|x_{ls}-\bar{x}\|^2$, and let
        $A^T y=x_{ls}-\bar{x}$ for some $y$. Then
        \begin{eqnarray*}
        \lambda_*&\leq& \frac{y^TA\left(I-\mu\frac{(\bar{x}-x_{ls})(\bar{x}-x_{ls})^T}{\|\bar{x}\|^2}\right)A^T y}{\|y\|^2}\\
        &=&\frac{\|\bar{x}\|^2\|\bar{x}-x_{ls}\|^2-\mu\|\bar{x}-x_{ls}\|^4}{\|\bar{x}\|^2\|y\|^2}\\
        &\leq& \frac{\mu\|\bar{x}-x_{ls}\|^4-\mu(1+\mu)\|\bar{x}-x_{ls}\|^4}{(1+\mu)\|\bar{x}\|^2\|y\|^2}\\
        &=&\frac{-\mu^2\|\bar{x}-x_{ls}\|^4}{(1+\mu)\|\bar{x}\|^2\|y\|^2}<0.
        \end{eqnarray*}
    \end{remark}

    \begin{remark}
        Equality \eqref{eq: extra_back} shows that $\eta_F(\bar{x})\rightarrow 0$ as $\bar{x}\rightarrow x_{ls}$, and it is included for completeness. Throughout this paper, we focus on the more general
        inconsistent case $r_{ls}\not=0$.
    \end{remark}

    To get insight into the size of minimal backward error of $x_s$ as an approximate solution of \eqref{eq:original problem}, we need the following result.

    \begin{lemma}[\mbox{\cite[p.~34]{doi:10.1137/1.9781611971484}}]
    For $r_{ls}\not=0$, the perturbation $E=-\bar{r}\bar{r}^{\dagger}A$ satisfies
    $$
    \bar{x}=\arg\min \|(A+E)x-b\|,
    $$
    and
    \begin{equation}\label{6}
    \|E\|=\frac{\|A^T\bar{r}\|}{\|\bar{r}\|},
    \end{equation}
    where $\dagger$ denotes the Moore--Penrose generalized inverse of a matrix.
    \end{lemma}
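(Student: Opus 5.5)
The plan is to verify the two claims directly: that $\bar{x}$ satisfies the normal equation of the perturbed problem, and that the spectral norm of $E$ equals the stated ratio. Since $\bar{r}\neq 0$ is a nonzero vector, its pseudoinverse is $\bar{r}^{\dagger}=\bar{r}^T/\|\bar{r}\|^2$. Hence
\[
E=-\frac{\bar{r}\,\bar{r}^TA}{\|\bar{r}\|^2},
\]
a rank-one matrix. We need $\bar{r}\neq 0$, which holds because $r_{ls}\neq 0$ implies $b\notin\mathcal{R}(A)$, so no residual $A\bar{x}-b$ can vanish.

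\emph{Optimality.} Compute the residual of $\bar{x}$ for the perturbed problem:
\[
(A+E)\bar{x}-b=\bar{r}-\bar{r}\,\frac{\bar{r}^TA\bar{x}}{\|\bar{r}\|^2}
=\bar{r}\Bigl(1-\frac{\bar{r}^TA\bar{x}}{\|\bar{r}\|^2}\Bigr).
\]
Call this vector $\hat r=c\,\bar{r}$. We then check the normal-equation condition $(A+E)^T\hat r=0$. Expanding,
\[
(A+E)^T\bar{r}=A^T\bar{r}-A^T\bar{r}\,\frac{\bar{r}^T\bar{r}}{\|\bar{r}\|^2}=0,
\]
so $(A+E)^T\hat r=c\,(A+E)^T\bar r=0$. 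Since the LS objective is convex, satisfying the normal equation means $\bar{x}$ is a minimizer of $\|(A+E)x-b\|$. If uniqueness is wanted, one argues that $A+E$ keeps full column rank when $\|E\|<\sigma_{\min}(A)$. The lemma as stated, with ``$\arg\min$'', is already satisfied by a minimizer, so I would only add a remark on this point.

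\emph{Norm.} $E$ is the outer product of $u=-\bar{r}/\|\bar{r}\|^2$ and $v=A^T\bar{r}$. Its spectral norm is therefore
\[
\|u\|\,\|v\|=\frac{\|A^T\bar{r}\|}{\|\bar{r}\|},
\]
which is \eqref{6}.

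The computations are elementary. The only delicate point I expect is the interpretation of $\arg\min$: showing that $\bar x$ solves the normal equation is immediate, while uniqueness requires the extra full-rank condition. I would handle this by noting that the normal equation characterizes all minimizers, so $\bar x$ belongs to the solution set in any case.
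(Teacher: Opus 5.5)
Your proof is correct and follows the standard argument behind the cited result: you verify the perturbed normal equation $(A+E)^T\bigl((A+E)\bar x-b\bigr)=0$ directly and use the rank-one norm identity, which is exactly how the paper computes $\|E_1\|$ in the proof of \cref{thm:explicit_error}. Your uniqueness caveat is unnecessary, and the route via $\|E\|<\sigma_{\min}(A)$ need not apply in general: since $A+E=(I-\bar r\bar r^{\dagger})A$, any $z\neq 0$ with $(A+E)z=0$ would give $Az\in\mathrm{span}(\bar r)\setminus\{0\}$, hence $\bar r\in\mathcal{R}(A)$ and $b\in\mathcal{R}(A)$, contradicting $r_{ls}\neq 0$, so $A+E$ keeps full column rank and $\bar x$ is the unique minimizer.
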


   By \cref{rem9}, this lemma indicates that $\|E\|=\|\bar{r}\bar{r}^\dagger A\|$ is
   a sharp bound for the minimal backward error $\eta_F(\bar{x})$ when $\bar{r}$ is close to $r_{ls}$.

\begin{theorem}[Explicit backward perturbations]\label{thm:explicit_error}
For the sLS problem \eqref{eq:sketched problem}, its solution $x_s$
solves the perturbed \eqref{eq:original problem} with two explicit backward perturbation matrices
\begin{equation}\label{twofor}
E_1 = -r_sr_s^\dagger A,\quad E_2=(r_{ls}-r_s) x_s^{\dagger}.
\end{equation}
Furthermore,
\begin{equation}\label{eq:E1}
    \|E_1\| \leq \|A\|\epsilon,
\end{equation}
and
\begin{equation}\label{eq:E2}
    \|E_2\| \leq \frac{\|r_{ls}\|}{\|x_s\|} \sqrt{\frac{2\epsilon}{1 - \epsilon}}.
\end{equation}
\end{theorem}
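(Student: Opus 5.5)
The argument splits into the two perturbations. For $E_1=-r_sr_s^\dagger A$ most of the work is already done. The lemma cited from \cite{doi:10.1137/1.9781611971484} applies with $\bar x=x_s$ and $\bar r=r_s$, which is nonzero because $\|r_s\|\geq\|r_{ls}\|>0$ by \eqref{eq:basic foundation}. It gives directly that $x_s=\arg\min\|(A+E_1)x-b\|$ and that $\|E_1\|=\|A^Tr_s\|/\|r_s\|$. Bound \eqref{eq:normal_residual_upper_bound} of \Cref{thm:embedded_stopping} then yields $\|E_1\|\leq\epsilon\|A\|$, which is \eqref{eq:E1}. To be safe I would also verify the norm identity: since $r_s^\dagger=r_s^T/\|r_s\|^2$, we have $E_1=-r_s(A^Tr_s)^T/\|r_s\|^2$, a rank-one matrix with norm $\|r_s\|\|A^Tr_s\|/\|r_s\|^2$.

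For $E_2=(r_{ls}-r_s)x_s^\dagger$, with $x_s\neq0$ and $x_s^\dagger=x_s^T/\|x_s\|^2$, the first step is to compute $E_2x_s=r_{ls}-r_s$. This gives
\[
(A+E_2)x_s-b=r_s+(r_{ls}-r_s)=r_{ls}.
\]
Optimality of $x_s$ for the perturbed problem is equivalent to $(A+E_2)^Tr_{ls}=0$, and since $A^Tr_{ls}=0$ this reduces to $E_2^Tr_{ls}=0$. Now $E_2^Tr_{ls}$ is a multiple of $x_s$ with coefficient $(r_{ls}-r_s)^Tr_{ls}$. This coefficient vanishes because $r_{ls}^Tr_s=\|r_{ls}\|^2$, an identity already shown in the proof of \Cref{thm:new}. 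Hence $(A+E_2)^T((A+E_2)x_s-b)=0$, so $x_s$ satisfies the normal equations of the perturbed problem and is therefore a minimizer; this holds whether or not $A+E_2$ keeps full rank, since the normal equations are sufficient for a minimizer. Finally, the rank-one norm is $\|E_2\|=\|r_{ls}-r_s\|/\|x_s\|$, and \Cref{thm:new} gives \eqref{eq:E2}.

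The main obstacle is recognizing the orthogonality $(r_{ls}-r_s)\perp r_{ls}$ that makes $E_2$ work, and that is already available from \Cref{thm:new}. Two degenerate cases need attention. We need $x_s\neq0$, which I would assume, or note that it follows when $b\notin\mathcal{R}(A)^\perp$. We also need $r_s\neq0$ for $E_1$, which holds since $r_{ls}\neq0$. The rest is routine computation with rank-one matrices.
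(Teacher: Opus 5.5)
Your proposal is correct and agrees with the paper's proof on the two norm bounds. Both arguments use the rank-one identities $\|E_1\|=\|A^Tr_s\|/\|r_s\|$ and $\|E_2\|=\|r_{ls}-r_s\|/\|x_s\|$, followed by \eqref{eq:normal_residual_upper_bound} and \Cref{thm:new}. The only difference is that the paper never checks that $x_s$ actually minimizes $\|(A+E_i)x-b\|$; it simply refers to Bj\"{o}rck's book (p.~98). You verify this. Your argument for $E_2$ replaces that citation with a clean self-contained check: the perturbed residual at $x_s$ is exactly $r_{ls}$, and $r_{ls}^T(r_{ls}-r_s)=0$ by the identity from the proof of \Cref{thm:new}. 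It also shows that no rank condition on $A+E_2$ is needed.

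One side remark is wrong. $x_s\neq0$ does not follow from $b\notin\mathcal{R}(A)^\perp$, because $x_s=0$ exactly when $A^TS^TSb=0$, and this can happen with $A^Tb\neq0$. For example, take $n=1$, $A=e_1$, $b=e_1-10e_2$, and let $S$ send $e_1,e_2$ to unit vectors $u,v$ with $u^Tv=0.1$, so that $\epsilon=0.1$ on $\mathrm{span}(A,b)$. Then $(SA)^TSb=1-10\cdot 0.1=0$, hence $x_s=0$ and $E_2=0$, yet $x_s$ does not solve the original problem since $x_{ls}=1$. So $x_s\neq0$ has to be kept as a hypothesis, as the paper implicitly does by dividing by $\|x_s\|$.
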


\begin{proof}
For the construction of the matrices in \eqref{twofor} and the proof, we refer to
\cite[p.~98]{doi:10.1137/1.9781611971484} and the reference therein when considering
$x_s=\arg\min \|(A+E)x-b\|$.
We only need to prove \eqref{eq:E1} and \eqref{eq:E2}. Using the spectral norm inequality and \eqref{eq:normal_residual_upper_bound}, we have
    \begin{equation*}
    \|E_1\| = \|r_sr_s^\dagger A\| = \frac{\|r_sr_s^T A\|}{\|r_s\|^2} =\frac{\|A^Tr_s\|}{\|r_s\|}\leq \|A\|\epsilon.
    \end{equation*}
   By Theorem~\ref{thm:new}, we obtain
      \begin{equation*}
    \|E_2\| = \frac{\|r_{ls}-r_s\|}{\|x_s\|}\leq  \frac{\|r_{ls}\|}{\|x_s\|}\sqrt{\frac{2\epsilon}{1 - \epsilon}},
    \end{equation*}
which establishes \eqref{eq:E2}.
\end{proof}
\begin{remark}
Comparative analysis with the preceding minimal backward error estimates reveals that the upper bound in \eqref{eq:E1} exactly coincides with that of the minimal backward error. That is, the sharpness of \eqref{eq:E1} demonstrates that $\|E_1\|$ constitutes an approximately minimal backward error bound
as $\epsilon\rightarrow 0$. For the
inconsistent LS problem \eqref{eq:original problem}, bound \eqref{eq:E2} is inferior to bound
\eqref{eq:E1} and
is only $O(\sqrt{\epsilon})$ as $\epsilon\rightarrow 0$.
\end{remark}

Next we connect backward and forward error analysis, and establish error bounds for $x_s$ in terms of
$\epsilon$ and $r_s$ or $r_{ls}$.

\begin{theorem}[Approximate solution error bounds]\label{thm:error_bounds}
Let $\kappa(A)=\|A\|\|A^\dagger\|$. Then
\begin{align}
\frac{\|x_{ls} - x_s\|}{\|x_s\|} &\leq \kappa^2(A)\epsilon\frac{\|r_s\|}{\|A\|\|x_s\|}, \label{eq:bound1} \\
\frac{\|x_{ls} - x_s\|}{\|x_{ls}\|} &\leq \kappa^2(A)\epsilon\sqrt{\frac{1+\epsilon}{1-\epsilon}}
\frac{\|r_{ls}\|}{\|A\|\|x_{ls}\|}.  \label{eq:bound1b}
\end{align}
\end{theorem}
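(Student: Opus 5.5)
The plan is to express the solution difference directly through the normal equation of \eqref{eq:original problem} and then apply \Cref{thm:embedded_stopping}. Since $A$ has full column rank, $A^\dagger=(A^TA)^{-1}A^T$. Also $A^Tr_{ls}=0$ and $r_s-r_{ls}=A(x_s-x_{ls})$. Hence
\begin{equation*}
x_s-x_{ls}=(A^TA)^{-1}A^T(r_s-r_{ls})=(A^TA)^{-1}A^Tr_s .
\end{equation*}
Taking norms and using $\|(A^TA)^{-1}\|=\|A^\dagger\|^2=\kappa^2(A)/\|A\|^2$ gives
\begin{equation*}
\|x_{ls}-x_s\|\leq \frac{\kappa^2(A)}{\|A\|^2}\,\|A^Tr_s\| .
\end{equation*}

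Next I bound $\|A^Tr_s\|$ by \eqref{eq:normal_residual_upper_bound}, namely $\|A^Tr_s\|\leq \epsilon\|A\|\|r_s\|$. This yields
\begin{equation*}
\|x_{ls}-x_s\|\leq \kappa^2(A)\,\epsilon\,\frac{\|r_s\|}{\|A\|}.
\end{equation*}
Dividing by $\|x_s\|$ gives \eqref{eq:bound1}.

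For \eqref{eq:bound1b}, I start from the same intermediate inequality. I replace $\|r_s\|$ by its upper bound from \eqref{eq:basic foundation} (\Cref{thm:old}), $\|r_s\|\leq\sqrt{(1+\epsilon)/(1-\epsilon)}\,\|r_{ls}\|$, and then divide by $\|x_{ls}\|$.

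I expect no real obstacle. The only points needing care are the identity $A^T(r_s-r_{ls})=A^Tr_s$, which uses $A^Tr_{ls}=0$, and the norm identity $\|(A^TA)^{-1}\|=\|A^\dagger\|^2$, which follows from the SVD of $A$ and uses its full column rank. I will also note that $\|x_s\|\neq 0$ and $\|x_{ls}\|\neq0$ are implicitly assumed so that the ratios are defined.
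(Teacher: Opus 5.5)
Your proof is correct and is essentially the paper's argument. The paper likewise starts from $A^TA(x_{ls}-x_s)=-A^Tr_s$ and bounds it via $\sigma_{\min}^2(A)=1/\|A^\dagger\|^2$ together with \eqref{eq:normal_residual_upper_bound}. It then obtains \eqref{eq:bound1b} by inserting the upper bound on $\|r_s\|$ from \eqref{eq:basic foundation}, just as you do.
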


\begin{proof}
Exploiting $A^T r_{ls}=0$, from $r_{ls}=Ax_{ls}-b$ and $r_s=A x_s-b$, we obtain
\begin{equation}
A^TA(x_{ls}- x_s)=A^T (r_{ls}-r_s)=-A^T r_s.
\end{equation}
    Taking norms on both sides gives
\begin{equation*}
        \sigma_{\min}^2(A)\|x_{ls} - x_s\| \leq \|A^T r_s\|,
\end{equation*}
where $\sigma_{\min}(\cdot)$ is the smallest singular value of a matrix.
By  \eqref{eq:normal_residual_upper_bound}, we obtain
$$
  \sigma_{\min}^2(A)\|x_{ls} - x_s\| \leq \epsilon \|A\| \|r_s\|.
$$
Therefore,
$$
    \frac{\|x_{ls} - x_s\|}{\|x_s\|} \leq \kappa^2(A)\epsilon\frac{\|r_s\|}{\|A\|\|x_s\|},
$$
which proves \eqref{eq:bound1}.

Exploiting \eqref{eq:basic foundation}, we amplify bound \eqref{eq:bound1} and can rewrite the resulting bound as
\eqref{eq:bound1b}.
\end{proof}

\begin{remark}
Bounds \eqref{eq:bound1} and \eqref{eq:bound1b} reveal a geometric stability inherent to $\epsilon$ and the
sizes of $\|r_s\|$ and $\|r_{ls}\|$, respectively. They demonstrate how embedding-induced distortions propagate
and affect the accuracy of $x_s$, and enable us to quantify solution quality.
\end{remark}

\begin{remark}
If $r_{ls}=0$, then $r_s=0$, provided that the matrix \( S \) ensures that \( SA \) maintains full column
rank, a condition under which \eqref{eq:sketched problem} is equivalent to \eqref{eq:original problem}.
This equivalence guarantees that the solution \( x_s =x_{ls}\),
independently of the parameter \( \epsilon \) and the conditioning of $A$. In this case, bounds
\eqref{eq:bound1} and \eqref{eq:bound1b} are trivially zero. Moreover, if the
inconsistence degree of \eqref{eq:original problem} is weak, i.e., $\|r_{ls}\|/(\|A\|\|x_{ls}\|)$ is  small,
then $x_s$ can be a good approximation to $x_{ls}$ even though $\epsilon$ is only fairly small; otherwise,
$x_s$ may be a poor approximation to $x_{ls}$ and even may have no accuracy when
$\kappa(A)$ is large and $\epsilon$ is not small enough. The stronger the inconsistence degree is,
the smaller $\epsilon$ is required to obtain equally accurate approximate solutions $x_s$'s of
\eqref{eq:original problem}. More precisely, this theorem indicates that, for the same $A$ but different
right-hand sides, equally accurate approximate
solutions require that the products $\epsilon\|r_{ls}\|$'s be a constant, which is different from
\eqref{eq:second_inequality} where the relative residual
error $\|r_{ls}-r_s\|/\|r_{ls}\|$ only depends on the size of $\epsilon$.
\end{remark}

With the help of \Cref{thm:error_bounds}, we can revisit
the left-hand side of \eqref{eq:second_inequality} in \Cref{thm:new} and obtain a new bound.

\begin{theorem}
With the notation of \Cref{thm:projected_error}, we have
\begin{equation}\label{eq: second_new_bound}
\frac{\| r_{ls}-r_s\|}{ \|r_{ls}\|} \leq \min\left\{\sqrt{\frac{2\epsilon}{1 - \epsilon}},\kappa^2(A)\epsilon\sqrt{\frac{1+\epsilon}{1-\epsilon}}\right\}.
\end{equation}
\end{theorem}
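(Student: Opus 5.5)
The bound \eqref{eq: second_new_bound} is a minimum of two terms. The first term is exactly \Cref{thm:new}, so only the second needs a proof. I will bound $\|r_{ls}-r_s\|$ through the solution error $\|x_{ls}-x_s\|$ and then invoke \Cref{thm:error_bounds}.

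The first step is to write $r_{ls}-r_s=A(x_{ls}-x_s)$, which follows because $b$ cancels. This gives
\begin{equation*}
\|r_{ls}-r_s\|\leq \|A\|\,\|x_{ls}-x_s\|.
\end{equation*}
The second step is to apply \eqref{eq:bound1b} in the form
\begin{equation*}
\|x_{ls}-x_s\|\leq \kappa^2(A)\,\epsilon\sqrt{\frac{1+\epsilon}{1-\epsilon}}\,\frac{\|r_{ls}\|}{\|A\|}.
\end{equation*}
Multiplying by $\|A\|$ cancels the factor $1/\|A\|$. Dividing by $\|r_{ls}\|$, assuming $r_{ls}\neq 0$ as in \Cref{thm:new}, then yields
\begin{equation*}
\frac{\|r_{ls}-r_s\|}{\|r_{ls}\|}\leq \kappa^2(A)\,\epsilon\sqrt{\frac{1+\epsilon}{1-\epsilon}}.
\end{equation*}
Combining this with \Cref{thm:new} gives the minimum.

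There is no real obstacle here. The only point to check is that \eqref{eq:bound1b} is used in its unnormalized form, with $\|x_{ls}\|$ cleared from both sides, so that no $\|x_{ls}\|$ or $\|x_s\|$ remains in the final bound.

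One could sharpen the second term. Since $A^Tr_{ls}=0$, we have $A^TA(x_{ls}-x_s)=-A^Tr_s$, so $\|r_{ls}-r_s\|=\|A(A^TA)^{-1}A^Tr_s\|\leq \|A^\dagger\|\,\|A^Tr_s\|\leq\kappa(A)\,\epsilon\,\|r_s\|$. This would give a bound with only one power of $\kappa(A)$. I would note this refinement, but follow the stated $\kappa^2(A)$ form, which comes directly from \Cref{thm:error_bounds}.
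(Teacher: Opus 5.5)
Your proof is correct and is essentially the paper's: the paper uses $\|r_{ls}-r_s\|\le\|A\|\|x_{ls}-x_s\|$, applies \eqref{eq:bound1} to get $\kappa^2(A)\epsilon\|r_s\|/\|r_{ls}\|$, and then invokes \eqref{eq:basic foundation}, which is exactly the combination you use in packaged form through \eqref{eq:bound1b}. Your side remark is also right, since $r_s-r_{ls}=P_{\mathcal{R}(A)}r_s$ and $\|(A^\dagger)^T\|=\|A^\dagger\|$; in fact, applying \Cref{thm:projected_error} with $A$ replaced by an orthonormal basis $U$ of $\mathcal{R}(A)$ (this leaves $\mathcal{L}$ unchanged, and $U^TS^TSr_s=0$) gives $\|r_s-r_{ls}\|=\|U^Tr_s\|\le\epsilon\|r_s\|$, which removes $\kappa(A)$ altogether.
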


\begin{proof}
    Since
    \begin{equation}\label{9}
        \|r_{ls}-r_s\|=\|A(x_{ls}-x_s)\|\leq \|A\|\|x_{ls}-x_s\|,
    \end{equation}
    by \eqref{9} and \eqref{eq:bound1}, we obtain
    \begin{align*}
        \frac{\|r_{ls}-r_s\|}{\|r_{ls}\|}&\leq\frac{\|A\|\|x_{ls}-x_s\|}{\|r_{ls}\|}\\
        &\leq \kappa^2(A)\epsilon\frac{\|r_s\|}{\|r_{ls}\|},
    \end{align*}
from which and \eqref{eq:basic foundation} it follows that
\begin{equation}\label{10}
\frac{\| r_{ls}-r_s\|}{ \|r_{ls}\|} \leq \kappa^2(A)\epsilon\left(\frac{1+\epsilon}{1-\epsilon}\right)^{\frac{1}{2}}.
\end{equation}
Together with \eqref{eq:second_inequality}, we complete the proof.
\end{proof}

\begin{remark}
    Bound \eqref{eq: second_new_bound} reveals how the condition number $\kappa(A)$ affects the error
    $\|r_{ls}-r_s\|$. If $\kappa(A)< \sqrt[4]{\frac{2}{(1+\epsilon)\epsilon}}< \frac{1.2}{\epsilon^{1/4}}$,
    bound \eqref{eq: second_new_bound} takes
    the second term in the braces; otherwise, it takes the first term, i.e., \eqref{eq:second_inequality},
    which should be a general case since, in applications,
    one generally cannot expect that $A$ is so well conditioned that $\kappa(A)<\frac{1.2}{\epsilon^{1/4}}$
    for a given fairly small $\epsilon$. For instance, for
    $\epsilon=10^{-4}$, it requires $\kappa(A)<12$, meaning that $A$ is very well conditioned.
\end{remark}

\subsubsection{Acute angle perturbations and embeddings}
The previous backward error analysis motivates us to study more on perturbations
that satisfy geometric constraints. Let us start with a definition.

\begin{definition}[Acute perturbation \mbox{\cite[pp.~137-146]{stewart1990matrix}}]\label{def:acute_p}
Let $\tilde{A}=A+E$. The subspaces $\mathcal{R}(A)$ and $\mathcal{R}(\tilde{A})$ are said to be acute if the corresponding orthogonal projectors $P_{\mathcal{R}(A)}$ and $P_{\mathcal{R}(\tilde{A})}$ satisfy
$$\|P_{\mathcal{R}(A)}-P_{\mathcal{R}(\tilde{A})}\|<1.$$
Correspondingly, $\tilde{A}=A+E$ is said to be an acute perturbation of A if $\mathcal{R}(A)$ and $\mathcal{R}(\tilde{A})$ as well as $\mathcal{R}(A^T)$ and $\mathcal{R}(\tilde{A}^T)$ are acute.
\end{definition}

\begin{lemma}[\mbox{\cite[p.~27]{doi:10.1137/1.9781611971484}}]\label{lemma:eq_relation_acute}
    The matrix $\tilde{A}$ is an acute perturbation matrix of A if and only if
    $$\mathrm{rank}(A)=\mathrm{rank}(\tilde{A})=\mathrm{rank}(P_{\mathcal{R}(A)}\tilde{A}P_{\mathcal{R}(A^T)}).$$
\end{lemma}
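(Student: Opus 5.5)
Write $P=P_{\mathcal{R}(A)}$, $Q=P_{\mathcal{R}(A^T)}$ and $k=\mathrm{rank}(A)$. The plan is to prove the two implications separately, working with principal angles between subspaces and the SVD of $A$. The standard facts we need are these. For two subspaces $\mathcal{X},\mathcal{Y}$ of equal dimension, $\|P_{\mathcal{X}}-P_{\mathcal{Y}}\|<1$ holds if and only if $P_{\mathcal{X}}$ restricted to $\mathcal{Y}$ is injective, i.e. if and only if no vector of $\mathcal{Y}$ is orthogonal to $\mathcal{X}$. If the dimensions differ, then $\|P_{\mathcal{X}}-P_{\mathcal{Y}}\|=1$. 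Both facts follow from the CS decomposition of the pair of projectors in Stewart--Sun \cite{stewart1990matrix}, which we take as known.

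\textbf{Acute $\Rightarrow$ rank condition.} Acuteness of $\mathcal{R}(A)$ and $\mathcal{R}(\tilde A)$ forces $\dim\mathcal{R}(\tilde A)=k$, so $\mathrm{rank}(\tilde A)=k$. Now take $x\in\mathcal{R}(A^T)$ with $P\tilde A Q x=0$, i.e. $P\tilde A x=0$.
\begin{itemize}
\item Since $\mathcal{R}(A)$ and $\mathcal{R}(\tilde A)$ are acute and $\tilde A x\in\mathcal{R}(\tilde A)$, the only vector of $\mathcal{R}(\tilde A)$ annihilated by $P$ is zero, so $\tilde A x=0$.
\item Then $x\in\mathcal{N}(\tilde A)=\mathcal{R}(\tilde A^T)^\perp$, while $x\in\mathcal{R}(A^T)$.
\item Acuteness of $\mathcal{R}(A^T)$ and $\mathcal{R}(\tilde A^T)$ rules out a nonzero vector of $\mathcal{R}(A^T)$ orthogonal to $\mathcal{R}(\tilde A^T)$, so $x=0$.
\end{itemize}
Hence $P\tilde AQ$ is injective on the $k$-dimensional space $\mathcal{R}(A^T)$ and vanishes on its complement, giving $\mathrm{rank}(P\tilde AQ)=k$.

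\textbf{Rank condition $\Rightarrow$ acute.} Let $M=P\tilde AQ$ with $\mathrm{rank}(M)=k$. Since $\mathcal{R}(M)\subseteq\mathcal{R}(A)$ and both have dimension $k$, we get $\mathcal{R}(M)=\mathcal{R}(A)$. Also $M=P\,(\tilde A Q)$, so
$$k=\mathrm{rank}(M)\le\mathrm{rank}(\tilde A Q)\le\mathrm{rank}(\tilde A)=k,$$
and therefore $\mathcal{R}(\tilde AQ)=\mathcal{R}(\tilde A)$. Consequently $P$ maps $\mathcal{R}(\tilde A)$ onto $\mathcal{R}(A)$. Both spaces have dimension $k$, so this restriction is injective: no nonzero vector of $\mathcal{R}(\tilde A)$ is orthogonal to $\mathcal{R}(A)$. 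With equal dimensions, the criterion above gives $\|P-P_{\mathcal{R}(\tilde A)}\|<1$. The row-space statement follows in the same way by applying the argument to $M^T=Q\tilde A^TP$ with the roles of $A,\tilde A$ replaced by $A^T,\tilde A^T$.

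\textbf{Main obstacle.} The key step is the projector criterion: for equal-dimensional subspaces, $\|P_{\mathcal{X}}-P_{\mathcal{Y}}\|<1$ holds exactly when $\mathcal{Y}\cap\mathcal{X}^\perp=\{0\}$, and the norm equals $1$ when the dimensions differ. My first approach would be to take orthonormal bases $X,Y$ and use that the nonzero singular values of $P_{\mathcal{X}}-P_{\mathcal{Y}}$ are the sines of the principal angles, together with ones accounting for any dimension mismatch. Then $\|P_{\mathcal{X}}-P_{\mathcal{Y}}\|<1$ is equivalent to $\sigma_{\min}(X^TY)>0$, which is the injectivity condition. If I avoid the CS decomposition, a direct argument suffices for the direction used. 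If $y\in\mathcal{Y}$ is a unit vector orthogonal to $\mathcal{X}$, then $(P_{\mathcal{Y}}-P_{\mathcal{X}})y=y$, so the norm is $1$. Conversely, for a unit $z$, write $\|(P_{\mathcal{X}}-P_{\mathcal{Y}})z\|^2$ in terms of the components of $z$ and bound it strictly below $1$ by compactness of the unit sphere.
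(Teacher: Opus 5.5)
Your proof is correct. Both implications go through as written, including the row-space half: $M^T=P_{\mathcal{R}(A^T)}\tilde A^TP_{\mathcal{R}(A)}$ is exactly the matrix of the lemma for the pair $(A^T,\tilde A^T)$, and it has rank $k$.

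The paper gives no argument of its own here; it quotes the lemma from Bj\"orck, where it goes back to Stewart. There it is usually proved in coordinates. Take an SVD $A=U\,\mathrm{diag}(\Sigma_1,0)\,V^T$, let $U_1,V_1$ be the first $k$ columns of $U,V$, and partition $U^T\tilde AV=(B_{ij})$ conformally. Then $P_{\mathcal{R}(A)}\tilde AP_{\mathcal{R}(A^T)}=U_1B_{11}V_1^T$. So the rank condition says that $B_{11}$ is nonsingular and $B_{22}=B_{21}B_{11}^{-1}B_{12}$. Acuteness is then read off from bases of $\mathcal{R}(\tilde A)$ and $\mathcal{R}(\tilde A^T)$ whose leading blocks are $B_{11}$ and $B_{11}^T$.

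Your condition that $P_{\mathcal{R}(A)}$ is injective on $\mathcal{R}(\tilde A)$ is the basis-free form of that nonsingularity, so the idea is the same. Your version saves the block bookkeeping. It also shows clearly that the forward implication uses only the trivial half of the projector criterion: a unit vector in one subspace orthogonal to the other gives norm $1$, and unequal dimensions always produce one. Only the converse needs the nontrivial half.

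One thing to tighten is your fallback ``direct argument'' for that nontrivial half, which the converse relies on if you avoid the CS decomposition. Write $z_1=P_{\mathcal{Y}}z$ and $z_2=z-z_1$. Then $(P_{\mathcal{X}}-P_{\mathcal{Y}})z=P_{\mathcal{X}}z_2-(I-P_{\mathcal{X}})z_1$ is a sum of orthogonal vectors. Hence $\|(P_{\mathcal{X}}-P_{\mathcal{Y}})z\|=\|z\|$ forces both $z_1\in\mathcal{Y}\cap\mathcal{X}^\perp$ and $z_2\in\mathcal{X}\cap\mathcal{Y}^\perp$.

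Your hypothesis excludes only the first intersection. The second is trivial only because $\dim\mathcal{X}=\dim\mathcal{Y}$: equal dimensions make $P_{\mathcal{X}}|_{\mathcal{Y}}\colon\mathcal{Y}\to\mathcal{X}$ bijective, and its adjoint is $P_{\mathcal{Y}}|_{\mathcal{X}}$, which is therefore injective too. Without this step the claim fails; take $\mathcal{Y}$ a line inside a plane $\mathcal{X}$. So the equal-dimension hypothesis must enter explicitly before compactness finishes the argument. In the principal-angle route you cite, this is automatic because $X^TY$ is square.
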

Based on this lemma, we now introduce a special class of subspace embeddings that guarantee to induce acute angle perturbations.

\begin{definition}[Acute angle embedding]\label{def:acute_embed}
An embedding matrix $S \in \mathbb{R}^{s \times m}$ is called an acute angle embedding of $A\in \mathbb{R}^{m \times n}$  with ${\rm rank}(A)=n$ if there exists an acute perturbation $\tilde{A}=A+E$ of $A$ with the minimal perturbation $E$ such that the unique
solution $x_s$ of \eqref{eq:sketched problem} satisfies
\begin{equation}
    x_s = \arg\min_y \|(A+E)y - b\|.
\end{equation}
\end{definition}

Based on the above, we investigate what types of embeddings can preserve acute angles. Let us review
two fundamental results for non-acute and acute perturbations, though they are not used in our later proofs.

\begin{lemma}[\mbox{\cite[p.~140]{stewart1990matrix}}]\label{lem:acute_perturb}
    If $A$ and $A+E$ is not acute, then
    \begin{equation}
        \|A^\dagger-\tilde{A}^\dagger\|\geq \frac{1}{\|E\|}.
    \end{equation}
\end{lemma}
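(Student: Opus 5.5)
Since the lemma is quoted from Stewart--Sun, I would prove it directly by exhibiting one unit vector on which $A^\dagger-\tilde A^\dagger$ is large. Write $P=P_{\mathcal{R}(A)}$ and $\tilde P=P_{\mathcal{R}(\tilde A)}$, and similarly $Q=P_{\mathcal{R}(A^T)}$ and $\tilde Q=P_{\mathcal{R}(\tilde A^T)}$. If $A$ and $\tilde A$ are not acute, then by \cref{def:acute_p} either $\|P-\tilde P\|\ge 1$ or $\|Q-\tilde Q\|\ge 1$. Since a difference of two orthogonal projectors always has norm at most one, this means $\|P-\tilde P\|=1$ or $\|Q-\tilde Q\|=1$.

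The standard consequence is that one of the subspaces contains a unit vector orthogonal to the other. Concretely, in the column-space case there is either a unit $u\in\mathcal{R}(\tilde A)$ with $u\perp\mathcal{R}(A)$, or a unit $u\in\mathcal{R}(A)$ with $u\perp\mathcal{R}(\tilde A)$. This follows from the CS decomposition of two projectors, or from the fact that $\|P-\tilde P\|=1$ forces $\mathcal{R}(\tilde A)\cap\mathcal{R}(A)^\perp\neq\{0\}$ or $\mathcal{R}(A)\cap\mathcal{R}(\tilde A)^\perp\neq\{0\}$. The row-space case yields an analogous vector $v$. I expect this to be the main technical step, and I would settle it by citing the CS decomposition of two projectors rather than reproving it.

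Take the case $u\in\mathcal{R}(\tilde A)$ with $u\perp\mathcal{R}(A)$, $\|u\|=1$. Then $A^\dagger u=0$, because $\mathcal{N}(A^\dagger)=\mathcal{R}(A)^\perp$. We also have
\[
u=\tilde A\tilde A^\dagger u=(A+E)\tilde A^\dagger u,
\]
and $A\tilde A^\dagger u\in\mathcal{R}(A)$ while $u\perp\mathcal{R}(A)$. Applying $I-P$ to this identity gives $u=(I-P)E\tilde A^\dagger u$. Hence $1\le\|E\|\,\|\tilde A^\dagger u\|$, and therefore
\[
\|(A^\dagger-\tilde A^\dagger)u\|=\|\tilde A^\dagger u\|\ge 1/\|E\|.
\]
The symmetric case, $u\in\mathcal{R}(A)$ with $u\perp\mathcal{R}(\tilde A)$, is handled by exchanging the roles of $A$ and $\tilde A$ and replacing $E$ by $-E$. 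The row-space cases follow from the same argument applied to $A^T$ and $\tilde A^T=A^T+E^T$, using $(A^\dagger)^T=(A^T)^\dagger$ and the fact that $\|E^T\|=\|E\|$ and $\|(A^\dagger-\tilde A^\dagger)^T\|=\|A^\dagger-\tilde A^\dagger\|$.

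Finally, $\|A^\dagger-\tilde A^\dagger\|\ge\|(A^\dagger-\tilde A^\dagger)u\|$ because $u$ is a unit vector, which completes the proof. Note that $E\neq 0$ automatically, since $E=0$ would make $A$ and $\tilde A$ trivially acute.
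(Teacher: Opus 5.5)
Your proof is correct. The paper gives no proof to compare against: it only quotes this lemma from Stewart and Sun and says it is not used later. Your argument is the standard one from that source: a unit $u$ in one range orthogonal to the other, so that $A^\dagger u=0$ and $1\le\|E\|\,\|\tilde A^\dagger u\|$, with transposition handling the row spaces.

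The step you planned to cite the CS decomposition for is elementary. $P-\tilde P$ is symmetric with spectrum in $[-1,1]$, so $\|P-\tilde P\|=1$ gives a unit eigenvector $x$ with $(P-\tilde P)x=\pm x$. The case $(P-\tilde P)x=x$ forces $x^TPx=1$ and $x^T\tilde Px=0$, i.e.\ $x\in\mathcal{R}(A)\cap\mathcal{R}(\tilde A)^\perp$; the case $-1$ is symmetric.
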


This lemma implies that if ${\rm rank}(\tilde{A})<n$ then the minimal
2-norm embedded solution $x_s$ can be far from the solution $x_{ls}$ of the LS problem
\eqref{eq:original problem} and generally bears no relation to $x_{ls}$.

\begin{lemma}[\mbox{\cite[p.~145]{stewart1990matrix}}]\label{lem:wedin}
    Let $A\in \mathbb{R}^{m \times n}$ with $m\geq n$ and $\tilde{A}=A+E$. If $\mathrm{rank}(A)=\mathrm{rank}(\tilde{A})=n$, then
    \begin{equation*}
        \|\tilde{A}^\dagger-A^\dagger\|   \leq  \sqrt{2} \|\tilde{A}^\dagger\| \|A^\dagger\|\|E\|.
    \end{equation*}
\end{lemma}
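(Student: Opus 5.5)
This is Wedin's bound for full-column-rank perturbations. The plan is to decompose $\tilde{A}^\dagger-A^\dagger$ into two pieces that live in orthogonal subspaces. Then bound each piece by $\|\tilde{A}^\dagger\|\|A^\dagger\|\|E\|$ and combine them Pythagorean-style, which is where the factor $\sqrt{2}$ comes from. Write $P=P_{\mathcal{R}(A)}=AA^\dagger$ and $\tilde{P}=P_{\mathcal{R}(\tilde{A})}=\tilde{A}\tilde{A}^\dagger$. Since both matrices have full column rank $n$, we have $A^\dagger A=\tilde{A}^\dagger\tilde{A}=I_n$.

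\textbf{Step 1: the decomposition.} Insert $I=P+(I-P)$ on the right:
\[
\tilde{A}^\dagger-A^\dagger=\tilde{A}^\dagger P+\tilde{A}^\dagger(I-P)-A^\dagger .
\]
Because $\tilde{A}^\dagger\tilde{A}=I_n$, the first part is
\[
\tilde{A}^\dagger P-A^\dagger=\tilde{A}^\dagger AA^\dagger-\tilde{A}^\dagger\tilde{A}A^\dagger=-\tilde{A}^\dagger EA^\dagger .
\]
For the second part, use $\tilde{A}^\dagger=\tilde{A}^\dagger\tilde{P}$ and $\tilde{A}^{\dagger}=(\tilde{A}^T\tilde{A})^{-1}\tilde{A}^T$, together with $A^T(I-P)=0$. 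This gives
\[
\tilde{A}^\dagger(I-P)=(\tilde{A}^T\tilde{A})^{-1}(A+E)^T(I-P)=(\tilde{A}^T\tilde{A})^{-1}E^T(I-P)=\tilde{A}^\dagger(\tilde{A}^\dagger)^TE^T(I-P).
\]
Hence
\[
\tilde{A}^\dagger-A^\dagger=-\tilde{A}^\dagger EA^\dagger+\tilde{A}^\dagger(\tilde{A}^\dagger)^TE^T(I-P).
\]

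\textbf{Step 2: bounding each term.} For the first term, $\|\tilde{A}^\dagger EA^\dagger\|\le\|\tilde{A}^\dagger\|\|E\|\|A^\dagger\|$ is immediate. For the second term, the naive bound gives $\|\tilde{A}^\dagger\|^2\|E\|$, which has the wrong form. The fix is to use $(I-P)A=0$, so that $(I-P)E=(I-P)\tilde{A}$. Then
\[
\|(I-P)\tilde{A}\tilde{A}^\dagger\|=\|(I-P)\tilde{P}\|=\|P-\tilde{P}\|=\|\tilde{P}-P\|,
\]
where the middle equality holds because both projectors have equal rank $n$, a standard fact. Symmetrically, $\tilde{P}-P$ can be expressed through $(I-\tilde{P})A=-(I-\tilde{P})E$, which gives
\[
\|(I-\tilde{P})P\|=\|(I-\tilde{P})AA^\dagger\|\le\|E\|\|A^\dagger\|.
\]
Therefore
\[
\|\tilde{A}^\dagger(\tilde{A}^\dagger)^TE^T(I-P)\|=\|\tilde{A}^\dagger\tilde{P}(I-P)\|\le\|\tilde{A}^\dagger\|\|\tilde{P}(I-P)\|=\|\tilde{A}^\dagger\|\|P-\tilde{P}\|\le\|\tilde{A}^\dagger\|\|A^\dagger\|\|E\|.
\]

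\textbf{Step 3: combining with $\sqrt2$.} The first term maps into $\mathbb{R}^n$ from $\mathcal{R}(A)$, since it equals $(\cdot)P$. The second term vanishes on $\mathcal{R}(A)$, since it equals $(\cdot)(I-P)$. So for $z=Pz+(I-P)z$,
\[
\|(\tilde{A}^\dagger-A^\dagger)z\|\le\alpha\|Pz\|+\beta\|(I-P)z\|\le\sqrt{\alpha^2+\beta^2}\,\|z\|,
\]
by Cauchy--Schwarz. Here $\alpha,\beta\le\|\tilde{A}^\dagger\|\|A^\dagger\|\|E\|$, which yields the factor $\sqrt{2}$.

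The main obstacle is Step 2 for the second term. One must avoid the crude $\|\tilde{A}^\dagger\|^2$ bound and instead route it through the projector identity $\|\tilde{P}(I-P)\|=\|(I-\tilde{P})P\|$ for equal-rank orthogonal projectors. I would cite this identity from Stewart--Sun (it follows from the CS decomposition) rather than reprove it.
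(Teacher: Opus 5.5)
Your proof is correct. The paper gives no proof of its own and only cites Stewart--Sun; your argument is essentially Wedin's proof there, specialized to full column rank, where the third term $(I-\tilde{A}^\dagger\tilde{A})E^T(A^\dagger)^TA^\dagger$ of the general decomposition drops out because $\tilde{A}^\dagger\tilde{A}=I_n$, and it uses the same key ingredients: the equal-rank identity $\|\tilde{P}(I-P)\|=\|(I-\tilde{P})P\|$ together with $(I-\tilde{P})A=-(I-\tilde{P})E$ to replace the crude factor $\|\tilde{A}^\dagger\|^2$ by $\|\tilde{A}^\dagger\|\|A^\dagger\|$, and the splitting of the domain into $\mathcal{R}(A)$ and $\mathcal{R}(A)^\perp$ that yields the constant $\sqrt{2}$.
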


This lemma shows that an acute perturbation $\tilde{A}^\dagger$ is continuous and tends to
$A^\dagger$ as $E$ tends to zero, so that the unique $x_s\rightarrow x_{ls}$ by the standard perturbation
theory in \cite{stewart1990matrix}.

\begin{theorem}[Acute angle embedding criterion]\label{thm:min_sv_bound}
Let $S \in \mathbb{R}^{d \times m}$ be an $\epsilon$-distortion embedding matrix over $\mathcal{R}((A,b))$,
and assume that ${\rm rank}(A)=n$. Then provided $\kappa(A)\epsilon< 1$,
the matrix $S$ is an acute angle embedding.
\end{theorem}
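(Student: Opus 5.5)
We take the explicit backward perturbation $E=E_1=-r_sr_s^\dagger A$ from \Cref{thm:explicit_error}. That result already guarantees
\[
x_s=\arg\min_y\|(A+E)y-b\|
\]
with $\|E\|\le\epsilon\|A\|$. This $E$ is the natural candidate for the approximately minimal perturbation in \Cref{def:acute_embed}, in view of \Cref{rem9} and the lemma giving \eqref{6}. What remains is to show that $\tilde A=A+E$ is an acute perturbation of $A$. We do this through the rank characterization in \Cref{lemma:eq_relation_acute}, namely $\mathrm{rank}(A)=\mathrm{rank}(\tilde A)=\mathrm{rank}(P_{\mathcal{R}(A)}\tilde AP_{\mathcal{R}(A^T)})=n$.

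First, $\mathrm{rank}(\tilde A)=n$. By Weyl's inequality for singular values,
\[
\sigma_{\min}(\tilde A)\ge \sigma_{\min}(A)-\|E\|\ge \sigma_{\min}(A)-\epsilon\|A\| = \sigma_{\min}(A)\bigl(1-\kappa(A)\epsilon\bigr)>0,
\]
using the hypothesis $\kappa(A)\epsilon<1$. Hence $\tilde A$ has full column rank.

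Second, $\mathrm{rank}(P_{\mathcal{R}(A)}\tilde AP_{\mathcal{R}(A^T)})=n$. Since $A$ has full column rank, $\mathcal{R}(A^T)=\mathbb{R}^n$, so $P_{\mathcal{R}(A^T)}=I$. Also $P_{\mathcal{R}(A)}A=A$, so
\[
P_{\mathcal{R}(A)}\tilde A = A+P_{\mathcal{R}(A)}E .
\]
Because $\|P_{\mathcal{R}(A)}E\|\le\|E\|\le\epsilon\|A\|$, the same Weyl argument gives $\sigma_{\min}(P_{\mathcal{R}(A)}\tilde A)\ge\sigma_{\min}(A)(1-\kappa(A)\epsilon)>0$. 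So this matrix also has rank $n$, and \Cref{lemma:eq_relation_acute} yields acuteness. Equivalently, we could bound $\|P_{\mathcal{R}(A)}-P_{\mathcal{R}(\tilde A)}\|\le\|E\|\|A^\dagger\|<1$ directly, since $P_{\mathcal{R}(\tilde A)}$ has the same rank $n$.

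The main obstacle is not these rank estimates but the phrase ``with the minimal perturbation $E$'' in \Cref{def:acute_embed}. I would handle it as follows. By \Cref{thm:teacher}, any backward perturbation $E'$ for $x_s$ with $b$ fixed (the case $\theta\to\infty$, $\mu=1$) satisfies
\[
\|E'\|\le\|E'\|_F\le\|E_1\|_F .
\]
Moreover, $E_1$ has rank one, so $\|E_1\|_F=\|E_1\|\le\epsilon\|A\|$. Consequently every such minimal perturbation $E'$ also satisfies $\|E'\|\le\epsilon\|A\|$, and the two Weyl steps above apply to $E'$ verbatim. Thus the acuteness conclusion holds whichever minimizer is meant, and the argument does not depend on identifying the minimizer exactly.
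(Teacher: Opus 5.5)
Your argument is correct and follows the paper's route: use the rank characterization of acute perturbations, and use $\|E_1\|\le\epsilon\|A\|<\sigma_{\min}(A)$ (from \eqref{eq:E1} and $\kappa(A)\epsilon<1$) to keep the rank at $n$. It is more complete than the paper's proof, which checks only $\mathrm{rank}(A+E)=n$ and does not address the condition on $P_{\mathcal{R}(A)}\tilde A P_{\mathcal{R}(A^T)}$ or which (minimal) $E$ is meant; one small correction is that $\|E'\|_F\le\|E_1\|_F$ holds for a \emph{minimal} backward perturbation $E'$ (simply because $E_1$ is feasible), not for an arbitrary one.
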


\begin{proof}


According to Lemma~\ref{lemma:eq_relation_acute}, an acute angle embedding requires that
        \begin{equation*}
            \mathrm{rank}(A)=\mathrm{rank}(A+E),
        \end{equation*}
        which holds when
        $$
        \|E\|<\sigma_{\min}(A).
        $$
By \eqref{eq:E1}, this requirement is met once
        \begin{align*}
            \epsilon\kappa(A) < 1.
        \end{align*}
\end{proof}

\begin{remark}
This theorem gives a sufficient condition for ${\rm rank}(SA)={\rm rank}(A+E)=n$.
However, $\kappa(A)\epsilon< 1$ may be restrictive. Fortunately, it is not {\em necessary},
so that it is not a stringent requirement on $\epsilon$ in practice.
\end{remark}

\section{Stopping criteria of iterative solvers for the sLS problem}\label{stopdesign}

Unlike those traditional stopping criteria for an iterative solver mentioned in
\cref{tradstop}, the sharp bounds \eqref{eq:normal_residual_upper_bound}
and \eqref{eq: second_new_bound} inspire us to
adopt completely new and general-purpose stopping criteria when iteratively solving the sLS
problem \eqref{eq:sketched problem} and using iterates as approximate solutions of of the
LS problem \eqref{eq:original problem}. For this
purpose, based on \eqref{eq:normal_residual_upper_bound}, a
stopping criterion for iterative solvers requires that one terminates iterations at the smallest iteration $k$
whenever the current iterate $x_k^s$ of \eqref{eq:sketched problem} satisfies
    \begin{equation}\label{eq:normal_residual_upper_bound2}
        \frac{\|A^T r_k^s\|}{\|A\| \|r_k^s\|} \leq \epsilon,
    \end{equation}
    where $r_k^s=Ax_k^s-b$.
This fundamentally different criterion from the traditional \eqref{trad}
arises from two important facts. First, the sharpness of \eqref{eq:normal_residual_upper_bound}
implies that the inherent error introduced by randomization
already dominates when the inequality becomes equality. Consequently, further iterations on the
sLS problem \eqref{eq:sketched problem} can only reduce errors
specific to itself but cannot bring out improvements for the original LS problem \eqref{eq:original problem}.
That is, once an iterate $x_k^s$ reached the bound $\epsilon$ in \eqref{eq:normal_residual_upper_bound2}, continuing iterations would no longer improve
the accuracy of $x_k^s$ as approximate solutions of the LS problem \eqref{eq:original problem}.
Second, in terms of \eqref{eq:normal_residual_upper_bound}, the left-hand side of \eqref{eq:normal_residual_upper_bound2} quantifies the orthogonality of
$r_k^s$ against $\mathcal{R}(A)$. This shows that the randomized approximation
framework fundamentally limits the attainable accuracy of $x_k^s$ as approximate solutions of
\eqref{eq:original problem} and surpassing this threshold wastes computational effort.

As far as an iterative solver for \eqref{eq:sketched problem} itself is concerned, a traditional
stopping criterion like \eqref{trad} is to terminate the solver for the first iteration step $k$ such that
\begin{equation}\label{eq:normal_residual_upper_bound3}
        \frac{\|(SA)^T (Sr_k^s)\|}{\|SA\| \|Sr_k^s\|} \leq tol,
\end{equation}
where one typically takes $tol\in [10\epsilon_{\rm mach},\sqrt{\epsilon_{\rm mach}}]$. However,
as has been elaborated above, such traditional criterion is inappropriate for
any longer because it ignores a key issue of the sLS problem \eqref{eq:sketched problem}'s
mismatch with the original LS problem \eqref{eq:original problem}. A reasonable and
correct stopping criterion is to use \eqref{eq:normal_residual_upper_bound2} rather than
\eqref{eq:normal_residual_upper_bound3} when iteratively
solving \eqref{eq:sketched problem}, which
enforces an optimal residual alignment guaranteed by the randomized framework.

We now further show why \eqref{eq:normal_residual_upper_bound2} is valid and
shed more light on it. To do this, let us write
$$
\|(SA)^T(Sr_k^s)\|=\|A^TS^TS r_k^s\|=\| A^T(S^TS-I) r_k^s+A^T r_k^s\|.
$$
Therefore, exploiting \eqref{eq:thm3.2}, we obtain
\begin{equation}\label{basicrelation}
\left|\|(SA)^T Sr_k^s\|- \|A^T(S^TS-I) r_k^s\|\right|\leq \|A^Tr_k^s\|\leq \|(SA)^T Sr_k^s\|+\|A\|\|r_k^s\|\epsilon.
\end{equation}
For a convergent iterative solver,
since $ A^Tr_k^s\rightarrow A^Tr_s$ and $(SA)^T Sr_k^s\rightarrow 0$ with $k$
increasing, we ultimately have
\begin{equation}\label{lowerupper}
\|A^T(S^TS-I) r_k^s\| \lesssim \|A^Tr_k^s\|\lesssim \|A\|\|r_k^s\|\epsilon
\end{equation}
i.e., \eqref{eq:normal_residual_upper_bound2} ultimately holds.

\begin{remark}
We notice from the proof of \cref{thm:projected_error}
that bound \eqref{eq:thm3.2} is sharp, so that the lower and upper bounds in
\eqref{lowerupper} are
almost equal and thus \eqref{eq:normal_residual_upper_bound2} is sharp.
As a result, once $\|(SA)^T Sr_k^s\|$ is sufficiently small, $\frac{\|A^Tr_k^s\|}{\|A\|\|r_k^s\|}$ stablizes at no more than $\epsilon$. Moreover, once $\|(SA)^T Sr_k^s\|<\|A\|\|r_k^s\|\epsilon$ considerably, say
 $\|(SA)^T Sr_k^s\|\leq 0.1\cdot\|A\|\|r_k^s\|\epsilon$,
further iterations do not improve the accuracy of $x_k^s$'s as approximate solutions
of the LS problem \eqref{eq:original problem}, and $\|A^Tr_k^s\|$ starts to stabilize with $k$ increasing.
Coming back to \eqref{eq:normal_residual_upper_bound3} and noticing that
$\|SA\| \|Sr_k^s\|\approx \|A\|\|r_k^s\|$, we can deduce that it suffices to take
$tol=0.1\epsilon$.
\end{remark}

Remarkably, the distortion $\epsilon$ or its accurate estimate are generally not available in
computations. Nevertheless, motivated by
\eqref{eq:normal_residual_upper_bound2}, we can design such a stopping criterion: If the left-hand sides of
\eqref{eq:normal_residual_upper_bound2} are almost unchanged for a few consecutive iteration steps $\ell$, e.g., $5$, from some iteration step $k$ onwards, we terminate the solver.
Precisely, for some $k$ onwards, if the geometric mean of the $k$th to $(k+\ell)$-th
relative residual
norms in the left-hand sides of \eqref{eq:normal_residual_upper_bound2}
are nearly one, we terminate the solver, and claim to have found a  best
possible approximate solution of \eqref{eq:original problem}. Specifically,
we have obtained an optimal approximate solution $x_k^s$ of the LS problem
\eqref{eq:original problem} whenever
\begin{equation}\label{stopcrit1}
\left(\frac{\|A^T x_{k+\ell}^s\|\|x_k^s\|}{\|A^T x_k^s\|\|x_{k+\ell}^s\|}\right)^{1/\ell}\approx 1,
\end{equation}
e.g., $0.99\sim 1.01$, for the first iteration step $k$. This is our first stopping criterion.

Alternatively, based on \eqref{eq: second_new_bound}, we can propose the other stopping criterion.
By \eqref{eq:basic_inequality} we have
\begin{equation}\label{eq: basic_inequality_c}
        \frac{1}{1+\epsilon}\|Sr_k^s\|^2\leq \|r_k^s\|^2 \leq \frac{1}{1-\epsilon}\|Sr_k^s\|^2.
\end{equation}
Since $Sr_k^s\rightarrow Sr_s$ and $r_s$ satisfies
\eqref{eq: second_new_bound}, we have $r_k^s\rightarrow r_s$. Therefore, if $\|r_k^s\|$ starts to
stabilize for a few consecutive steps $\ell$, say 5, from some $k$ onwards,
we terminate the iterations and have solved the LS problem \eqref{eq:original problem}.
Specifically, an iterative solver has computed
an optimal approximate solution of \eqref{eq:original problem} whenever the geometric mean
\begin{equation}\label{stopcrit2}
\left(\frac{\|r_{k+\ell}^s\|}{\|r_k^s\|}\right)^{1/\ell}\approx 1,
\end{equation}
e.g., $0.99\sim 1.01$,
for the first iteration step $k$. We then terminate the solver when \eqref{stopcrit2} is met.
This is our second stopping criterion.

For the LSQR algorithm, it is well known from \cite{doi:10.1137/1.9781611971484,10.1145/355984.355989} that
the left-hand side of \eqref{eq:normal_residual_upper_bound3} may not decrease smoothly
as $k$ increases and may thus be hard to stabilize smoothly when $SA$ is ill conditioned, so does
the left-hand
side of \eqref{eq:normal_residual_upper_bound2} either; on the other hand,
$\|Sr_k^s\|$ unconditionally decreases monotonically and converges
to $\|Sr_s\|$, so that $\|r_k^s\|\rightarrow \|r_s\|$ exhibits smooth convergence for a fairly small
$\epsilon$, as \eqref{eq: basic_inequality_c} indicates.
Therefore, \eqref{eq:basic foundation} shows that $\|r_k^s\|$ ultimately stabilizes
at $\|r_{ls}\|$ with a multiple $\sqrt{\frac{1+\epsilon}{1-\epsilon}}$.
As a consequence, the termination criterion \eqref{stopcrit1} for LSQR may be unreliable, but
 \eqref{stopcrit2} is a reliable and general-purpose one for it.

The LSMR algorithm is mathematically the minimal
residual (MINRES) method \cite{doi:10.1137/1.9781611971484} applied to the normal equation of a LS problem. In our context, this means
that, when using it to solve \eqref{eq:sketched problem}, $\|(SA)^T(Sr_k^s)\|$ rather than $\|Sr_k^s\|$ decreases monotonically with $k$ increasing.
According to \eqref{basicrelation} and the analysis followed, it is known
that $\|A^Tr_k^s\|$ overall decreases smoothly but
$\|r_k^s\|$ may not since $\|Sr_k^s\|$ does not possess monotonic decreasing property and
may exhibit irregular convergence. Therefore, if LSMR is used to solve \eqref{eq:sketched problem},
then the stopping criterion \eqref{stopcrit1} is preferable to \eqref{stopcrit2}.


\section{Numerical experiments}\label{sec:experiments}
We now present a number of numerical experiments to justify our theoretical results and new
stopping criteria.
The experiments were performed on Intel(R) Core(TM) i7-9700 CPU with 7.8GB RAM using the {\sc Matlab}
R2024a with
the machine precision $\epsilon_{\rm mach}=2.22\times 10^{-16}$ under Microsoft Windows 11 system, and we
used the {\sc Matlab} built-in function {\sf lsqr} and the LSMR algorithm as our test iterative solvers.
We had written the {\sc Matlab} code of LSMR by following {\sf lsqr}.

\setlength{\tabcolsep}{12pt} 
\renewcommand{\arraystretch}{1.5} 
\begin{table}[htbp]
  \centering
  \caption{Test matrices} 
  \label{tab:results}
  \begin{tabular}{|>{\centering}m{3cm}|c|c|c|c|} 
    \hline
    $A$ & $m$ & $n$ & $nnz$ & $\kappa(A)$ \\
    \hline
    illc1033 & 1033   & 320 & 4719 & 1.8888e+04 \\
    \hline
    Kemelmacher &  28452  & 9693 &   100875 &  2.3824e+04 \\
    \hline
    mesh\_deform & 234023    &  9393 &  853829 & 1.1666e+03 \\
    \hline
    photogrammetry &  1388   &  390 &    11816 &  4.3519e+08 \\
    \hline
    well1850 &  1850   &    712 & 8755 &   111.3129 \\
    \hline
    photogrammetry2 & 4472  &936 &  37056 & 1.3391e+08 \\
    \hline
  \end{tabular}
\end{table}

We use several full column rank matrices from the SuiteSparse matrix
collection \cite{Suitsparse} as $A$, and construct $b=Ax_{ls}-r_{ls}$, where the residual $r_{ls}=10^{-3}t/\|t\| $, and the solution $x_{ls}$ and
the vector $t$ are generated in the normal distribution $\mathcal{N}(0,1)$.
\Cref{tab:results} lists the test matrices $A$ and
some of their properties, where $nnz$ is the total number of nonzero entries of $A$ and
$\kappa(A)$ is the available condition number of $A$
from the SuiteSparse matrix collection. All the rows $d$ of the embedding matrices
$S$ in \cref{fig:fig1}--\ref{fig:fig12} are set to twice the number of columns of $A$, i.e., $d=2n$.

\begin{figure}[htbp]
    \centering
    \includegraphics[width=7.5cm, height=6cm]{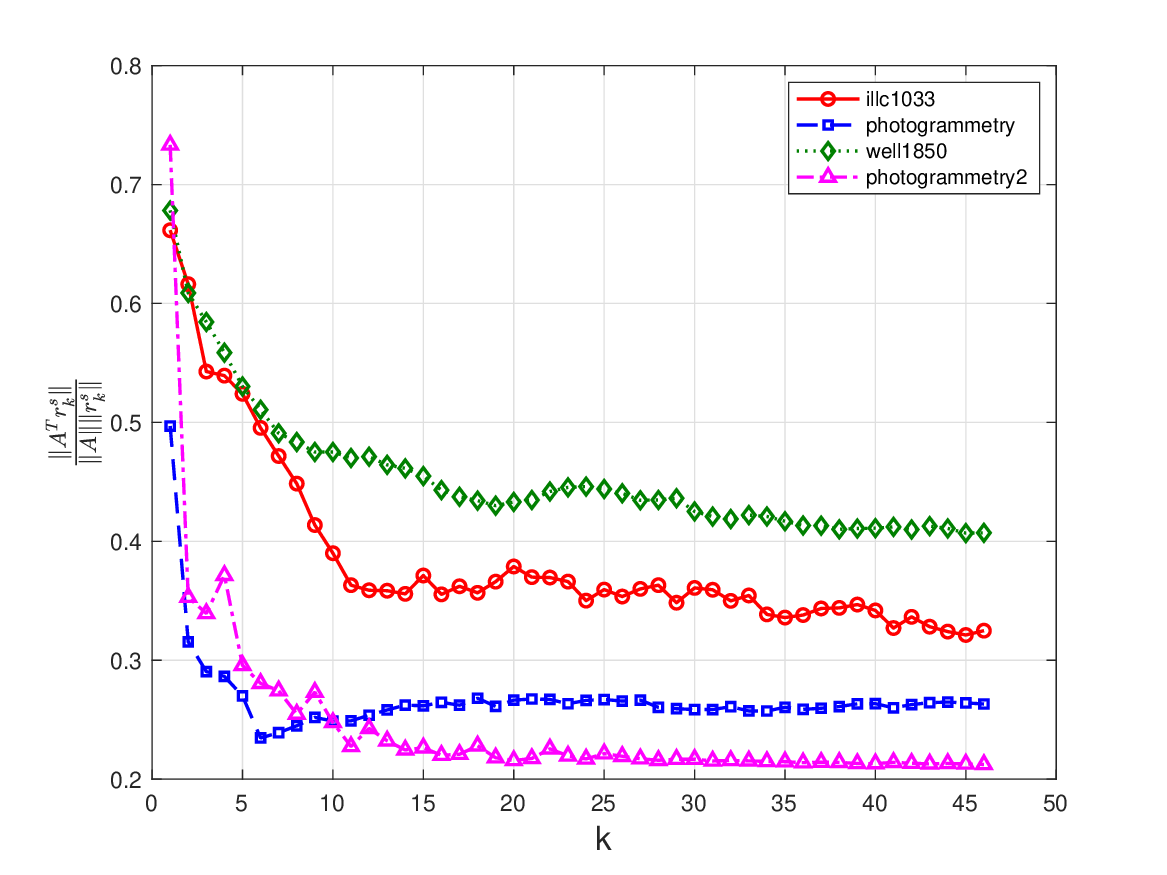} 
    \hfill
    \includegraphics[width=7.5cm, height=6cm]{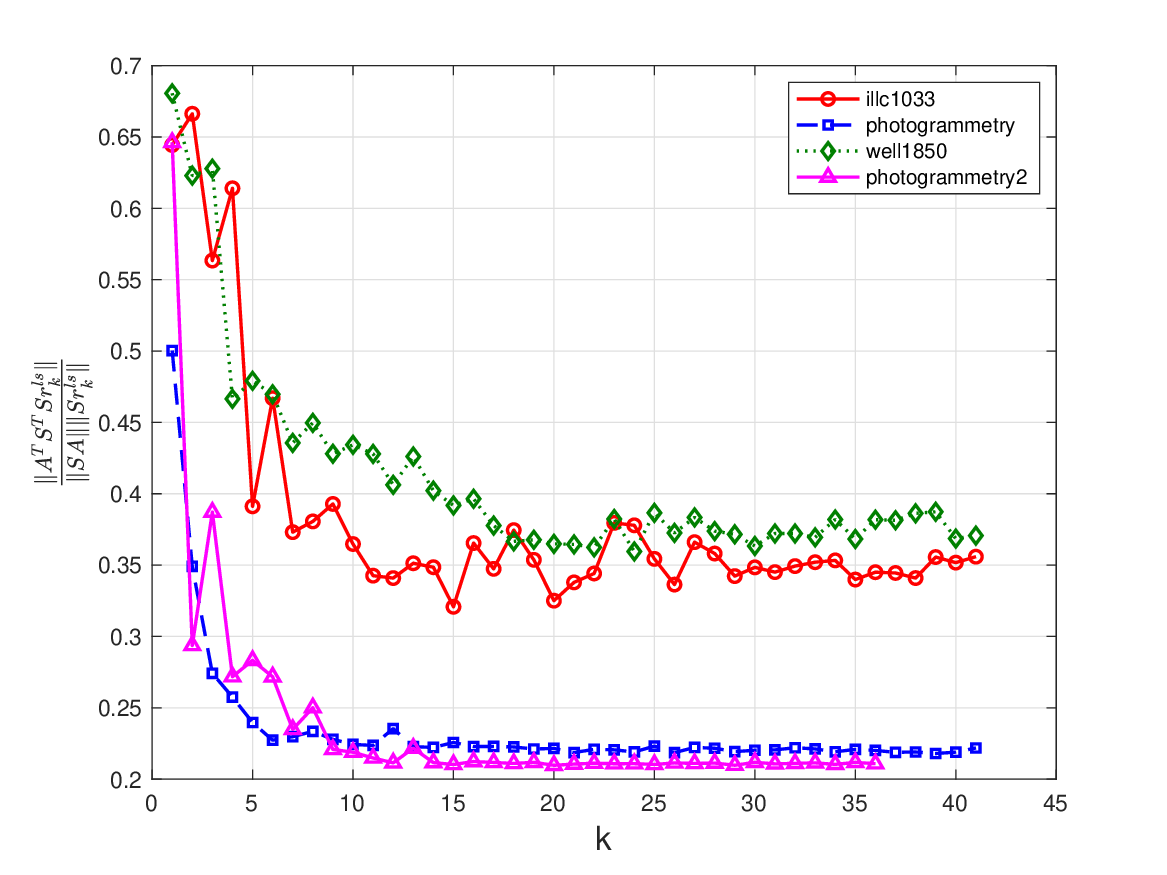}
    \caption{Relative residual norms of the normal equations of the LS problems
    \eqref{eq:original problem} and \eqref{eq:sketched problem}
 using Gaussian embedding matrices and LSQR}\label{fig:fig1}
\end{figure}

We first use illc1033, photogrammetry, well1850 and photogrammetry2 to conduct numerical experiments using
Gaussian embedded matrices. The other test matrices are excluded because generating its Gaussian embedding matrix exceeds the allocated memory of our computer. We use LSQR to compute
the $k$-step iterates $x_k^s$ for \eqref{eq:sketched problem} and
$x_k^{ls}$ for \eqref{eq:original problem}, and define the residual
$r_k^{ls}=Ax_k^{ls}-b$. \Cref{fig:fig1} shows that the convergence
curves of the left-hand sides of \eqref{eq:normal_residual_upper_bound2} and the
the counterparts by taking the $x_k^{ls}$ as approximate solutions of \eqref{eq:sketched problem}. As is seen,
LSQR undergos a rapid initial decrease followed by gradual stabilizations as iterations proceed, though
possibly irregular in the stabilization phase.
This behavior persists regardless of whether problem \eqref{eq:original problem} is approximated by problem
\eqref{eq:sketched problem} or problem \eqref{eq:sketched problem} is approximated by problem
\eqref{eq:original problem}. Strikingly, each of the rapid decreasing processes
took only dozens of iterations, indicating that LSQR
solved \eqref{eq:original problem} very quickly by solving its sketched surrogate \eqref{eq:sketched
problem} to obtain an optimal approximate solution of \eqref{eq:original problem}.
These results are in accordance with Theorem~\ref{thm:embedded_stopping} and the
key arguments between \eqref{eq:normal_residual_upper_bound2} and
\eqref{eq:normal_residual_upper_bound3}, illustrating that
the stopping criterion \eqref{stopcrit1} worked reliably when the right-hand sides lie in
$[0.99,1.01]$. Furthermore, based on bound \eqref{eq:normal_residual_upper_bound},
from \Cref{fig:fig1} (left), we can deduce that the four $\epsilon$'s are smaller than
0.42, 0.34, 0.24, and 0.225 for well1850, illc1033, photogrammetry, and photogrammetry3, respectively.

Next we examine SRHT embedding matrices.
Traditional construction of SRHT matrices via Hadamard matrices constrains the row number $m$
of $A$ to the powers of 2.
In practice, we can pad the input data with zeros to adjust the row number $m$ of $A$,
thereby rounding $m$ up to the smallest power of two that is greater than $m$ \cite{balabanov2023block}
Due to the lack of the allocated memory, mesh{\_}deform is excluded in the experiments.
\Cref{fig:fig2} exhibits the behavior of LSQR similar
to that in Figure \ref{fig:fig1}.
Therefore, the comments and observations on Figure \ref{fig:fig1} apply to \ref{fig:fig2}.

\begin{figure}[htbp!]
    \centering
    \includegraphics[width=7.5cm, height=6cm]{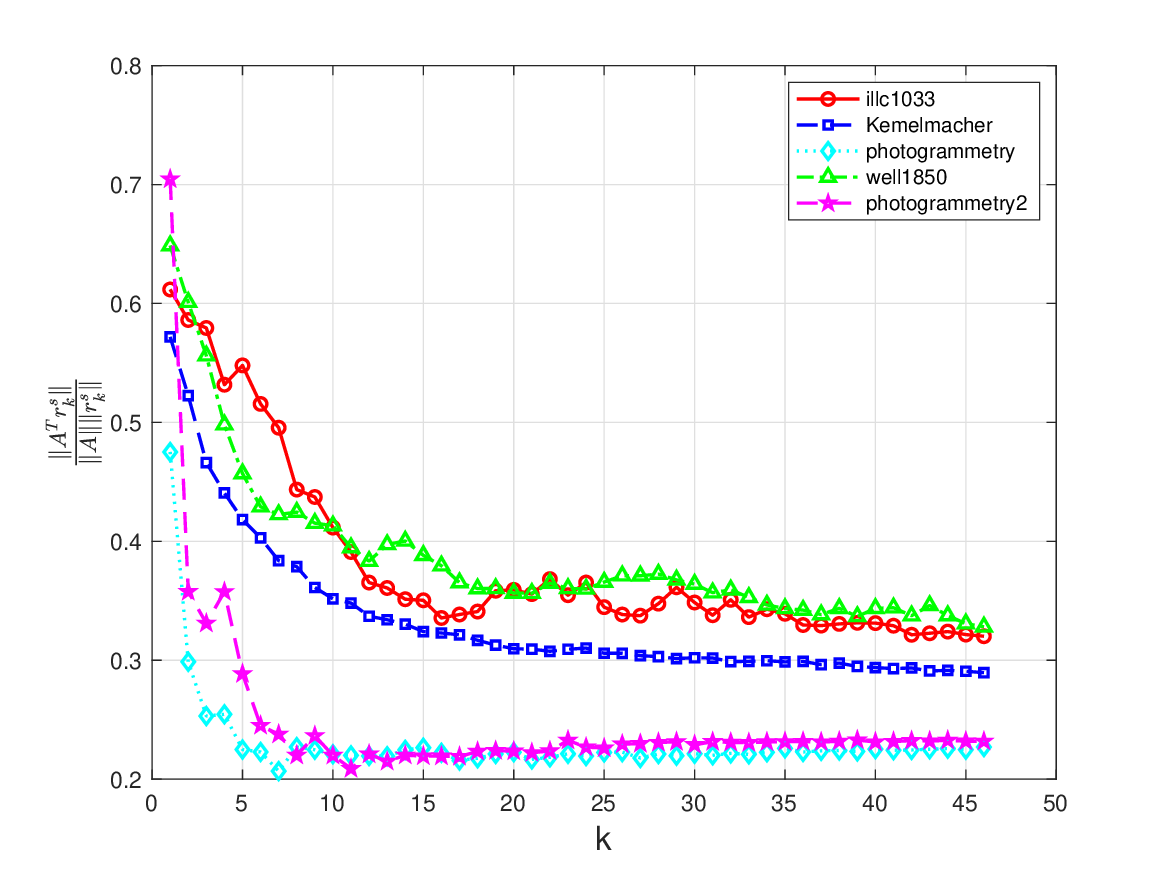} 
    \hfill
    \includegraphics[width=7.5cm, height=6cm]{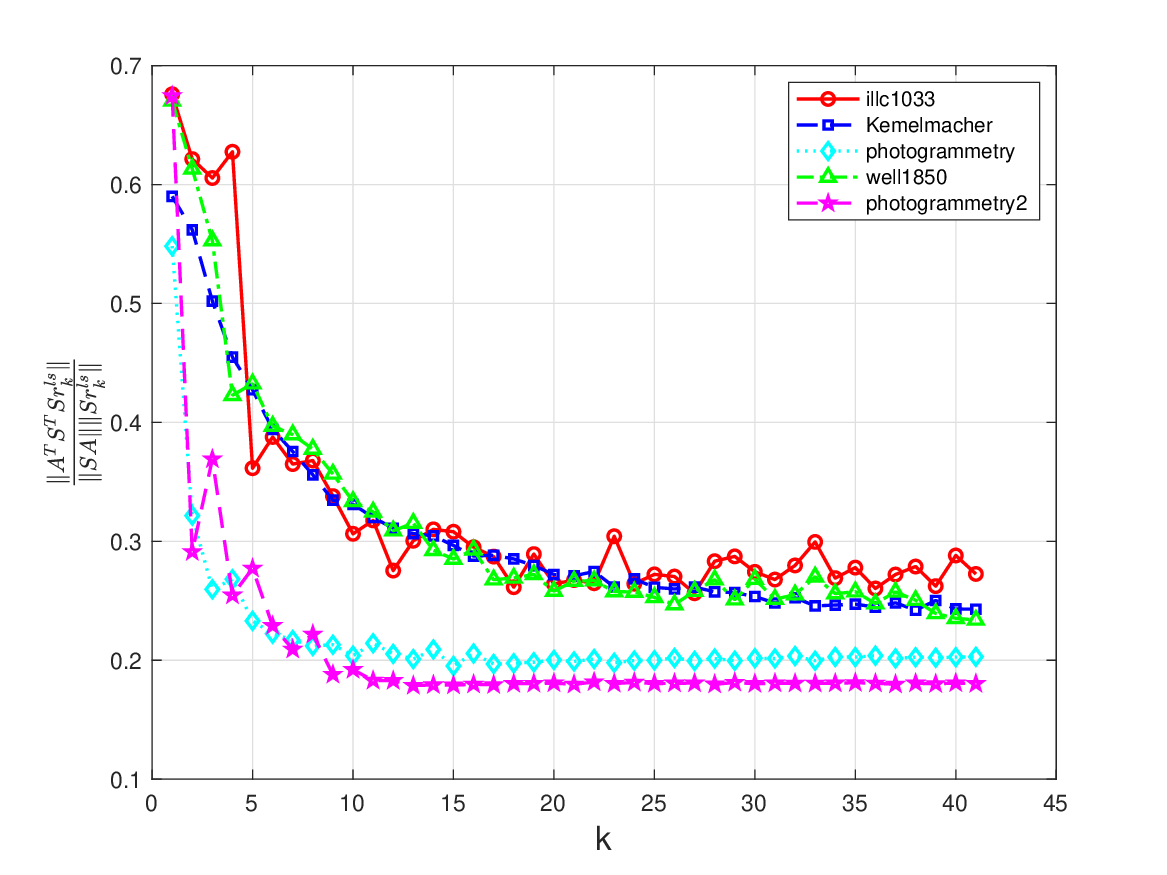}
    \caption{Relative residuals norms of the normal equations of the LS problems
    \eqref{eq:original problem} and \eqref{eq:sketched problem} using SRHT embedding matrices and LSQR}\label{fig:fig2}
\end{figure}


\begin{figure}[htbp!]
    \centering
    \includegraphics[width=7.5cm, height=6cm]{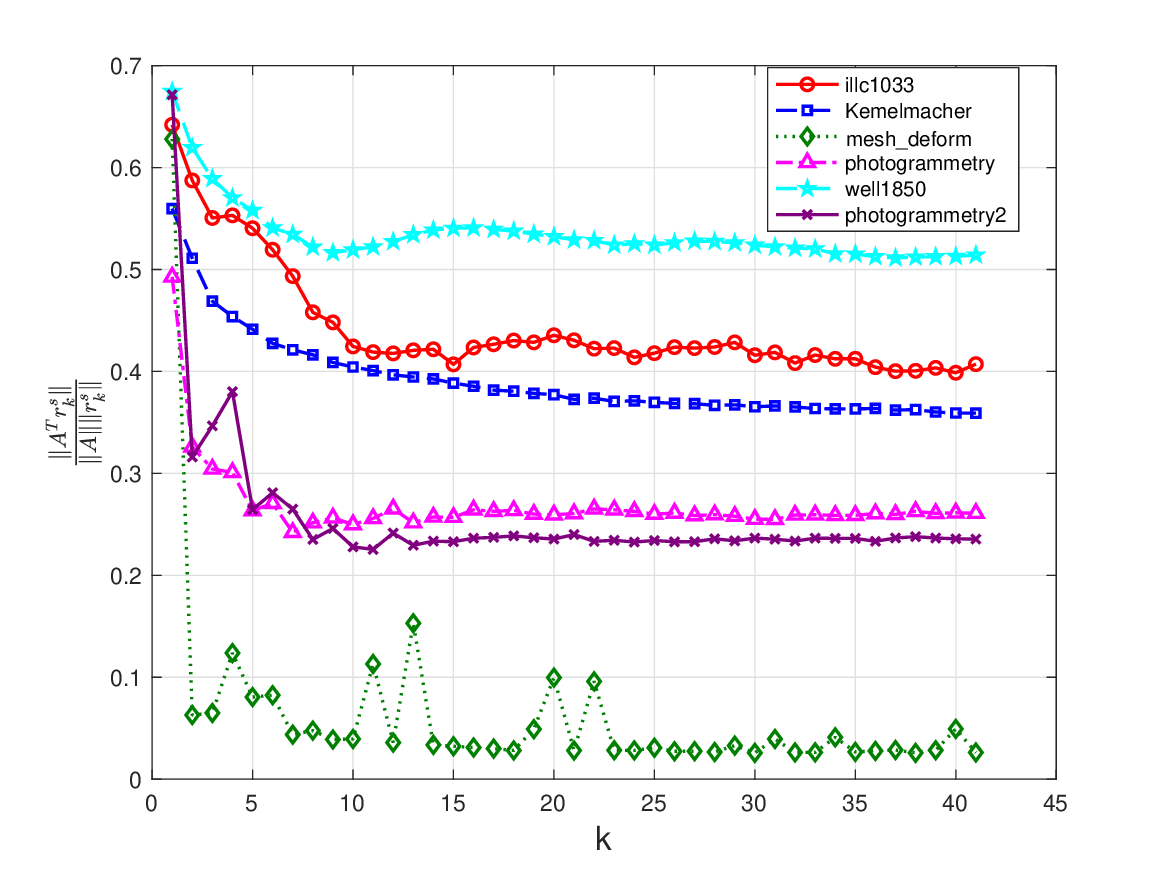} 
    \hfill
    \includegraphics[width=7.5cm, height=6cm]{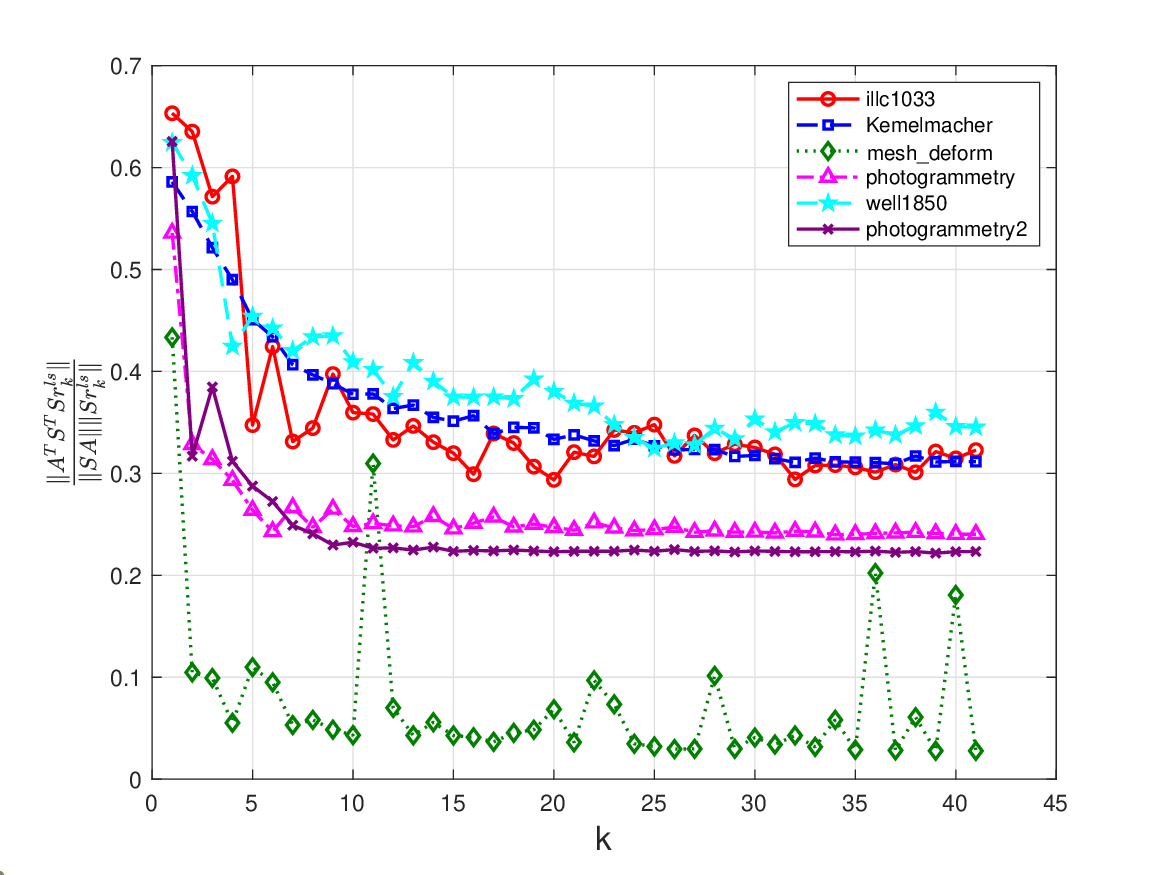}
    \caption{Relative residual norms of  the normal equations of the LS problems
    \eqref{eq:original problem} and \eqref{eq:sketched problem}
 using sparse embedding matrices and LSQR}\label{fig:fig3}
\end{figure}

\Cref{fig:fig3} depicts the convergence processes of LSQR for \eqref{eq:original problem} and \eqref{eq:sketched problem} using sparse embedding matrices. We observe similar and some dissimilar
phenomena to those in \Cref{fig:fig1,fig:fig2}.
Overall, the sparse embedding did not work as well as Gaussian and SRHT embeddings, and
exhibited more irregular behavior and was less effective, particularly for illc1033 and well1850.
The less effectiveness is expected from our previous description and discussion
in \cref{subsec:compare} on these three kinds
of embedding techniques when taking the same embedding row number $d$. Irregular convergence behavior is
not unusual because of the convergence
property of LSQR residual norms of the normal equations, as we have explained in \cref{stopdesign}. Therefore,
the stopping criterion \eqref{stopcrit1} for LSQR may indeed be unreliable.

\begin{figure}[htbp]
    \centering
    \includegraphics[width=7.5cm, height=6cm]{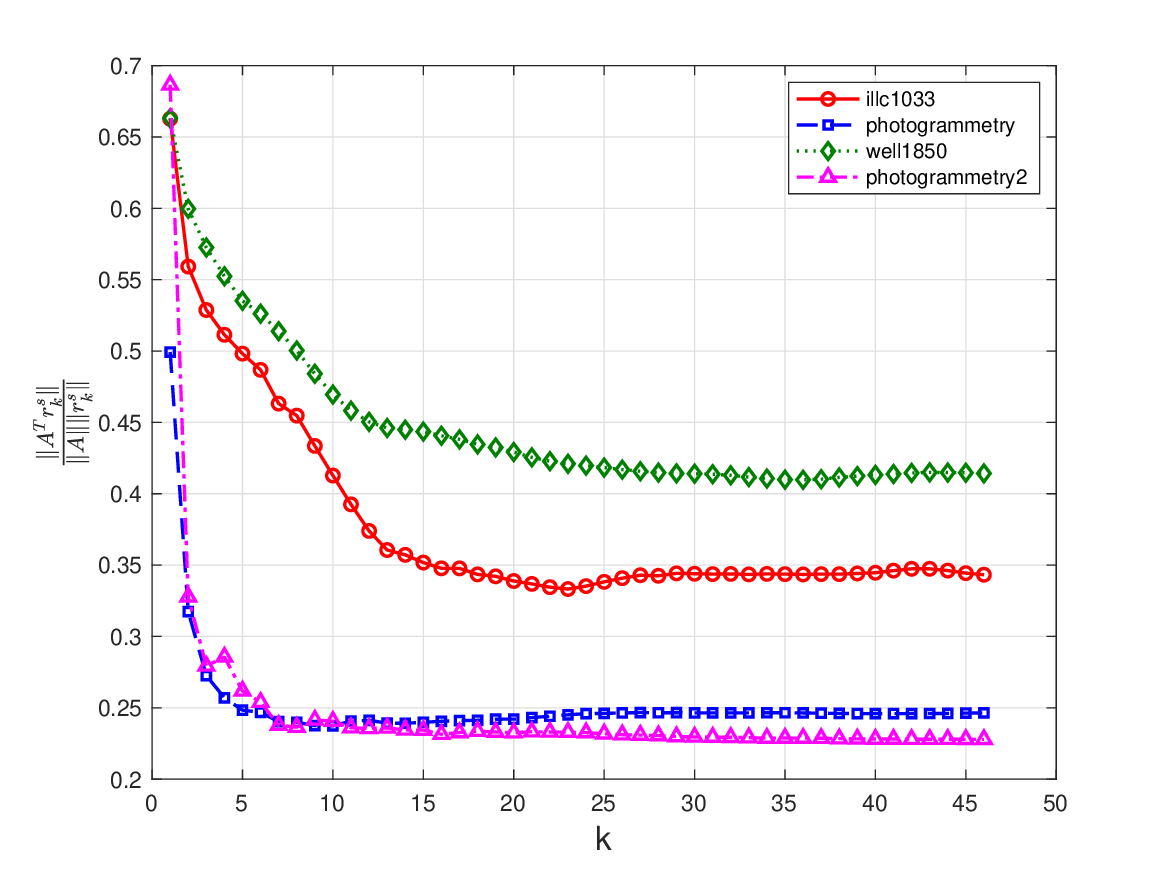} 
    \hfill
    \includegraphics[width=7.5cm, height=6cm]{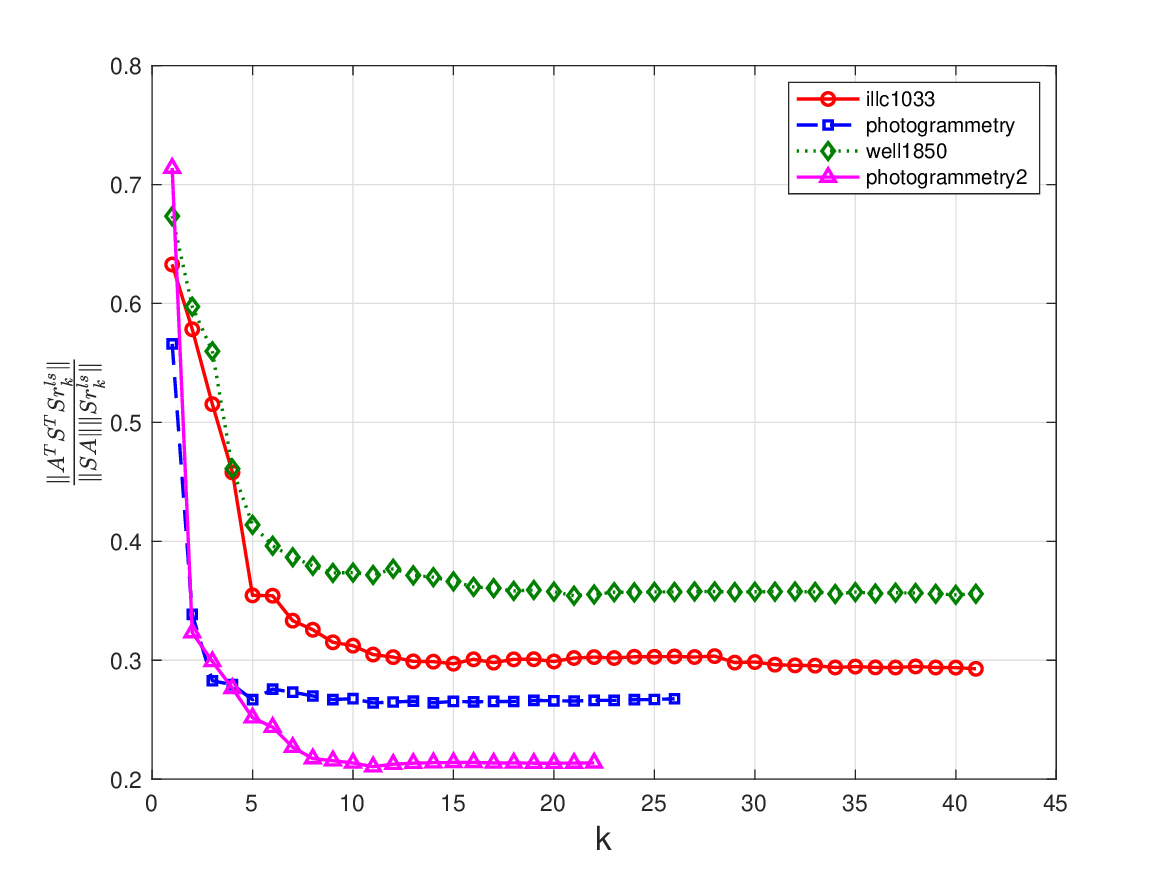}
    \caption{Relative residual norms of the normal equations of the LS problems
    \eqref{eq:original problem} and \eqref{eq:sketched problem}
    using Gaussian embedding matrices and LSMR}\label{fig:fig4}
\end{figure}

\begin{figure}[htbp]
    \centering
    \includegraphics[width=7.5cm, height=6cm]{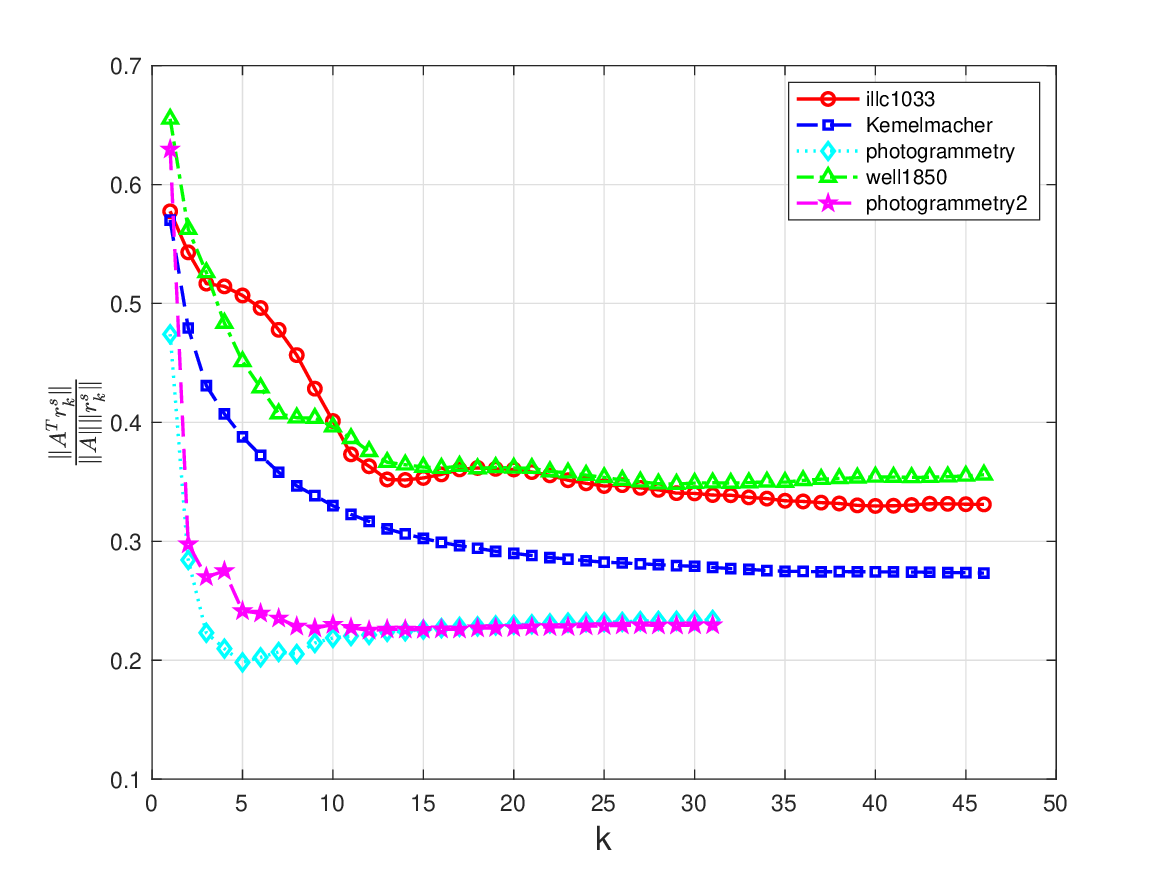} 
    \hfill
    \includegraphics[width=7.5cm, height=6cm]{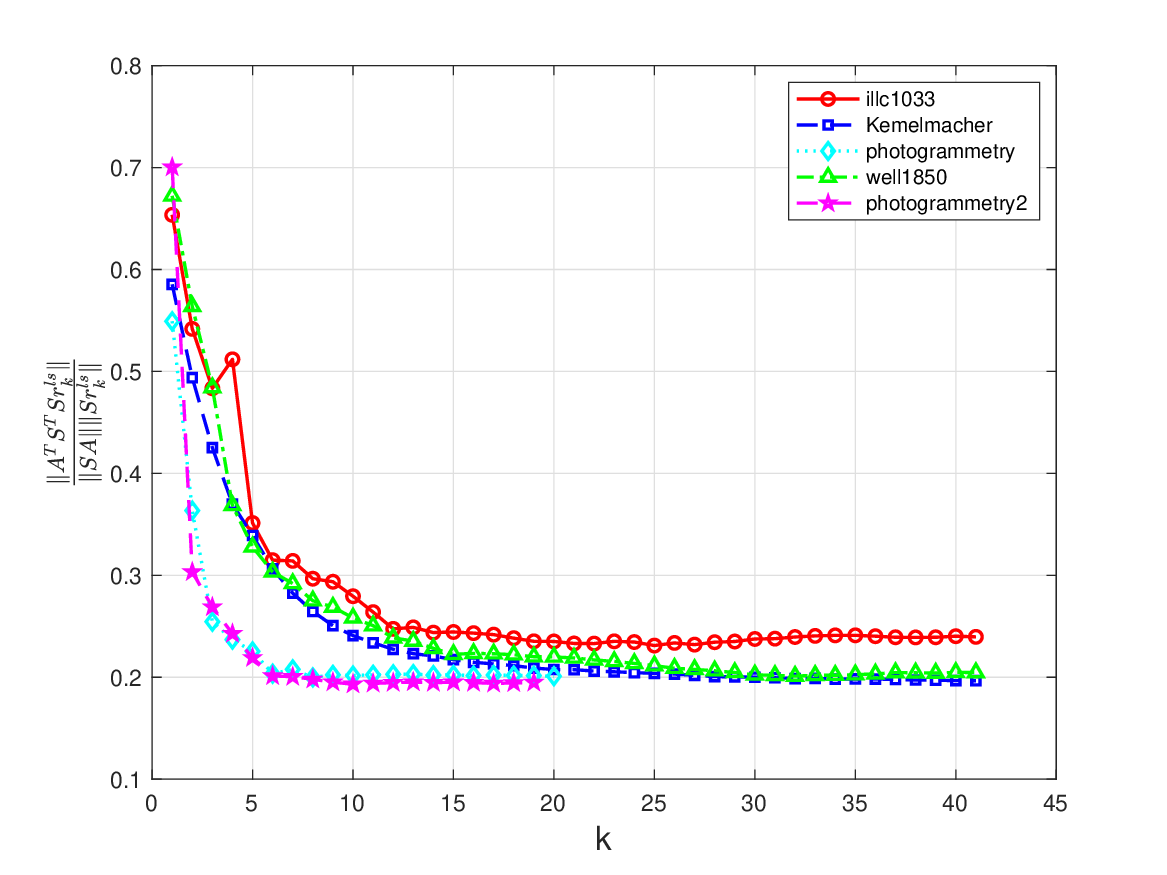}
    \caption{Relative residual norms of the normal equations of the LS problems
    \eqref{eq:original problem} and \eqref{eq:sketched problem}
    using SRHT embedding matrices and LSMR}\label{fig:fig5}
\end{figure}

\begin{figure}[htbp]
    \centering
    \includegraphics[width=7.5cm, height=6cm]{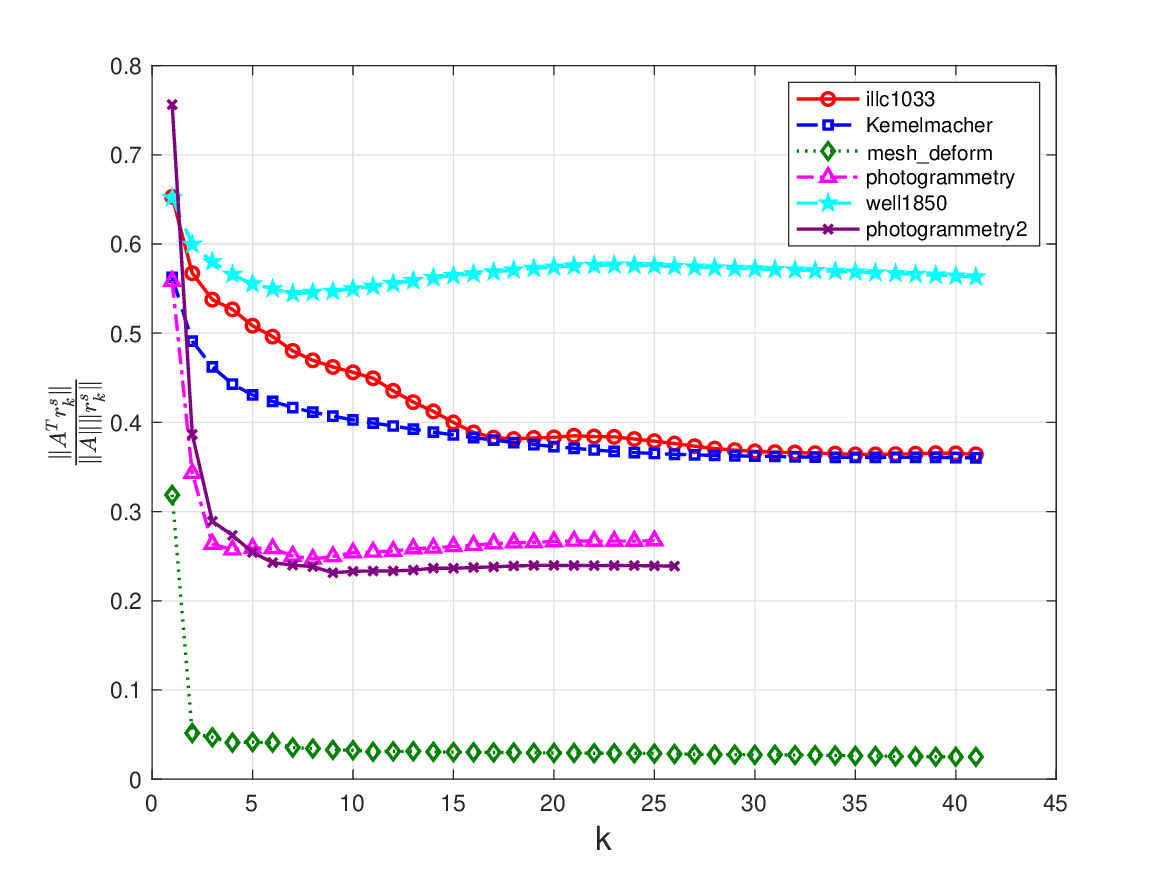} 
    \hfill
    \includegraphics[width=7.5cm, height=6cm]{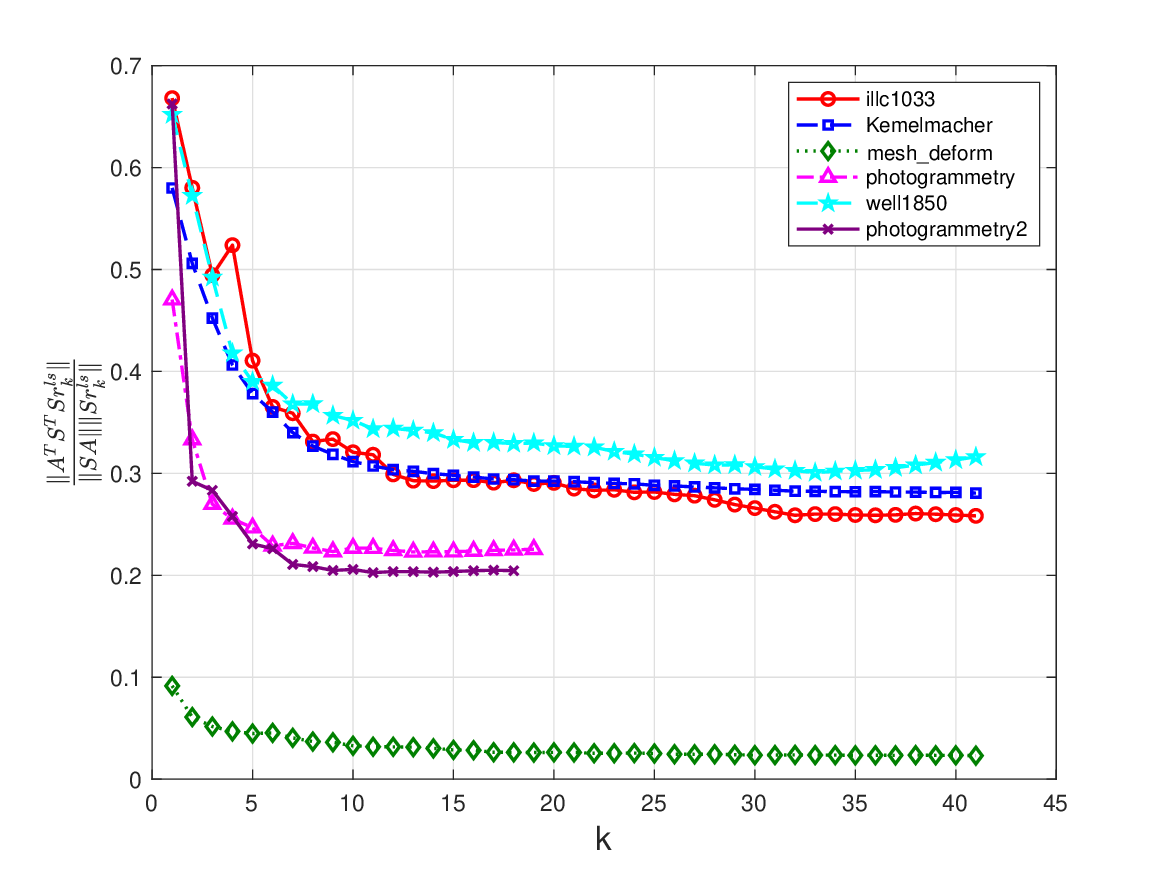}
    \caption{Relative residual norms of the normal equations of the LS problems
    \eqref{eq:original problem} and \eqref{eq:sketched problem}
   using sparse embedding matrices and LSMR}\label{fig:fig6}
\end{figure}

We conducted similar experiments  using the LSMR algorithm. Similar to \cref{fig:fig1}--\cref{fig:fig3}.
The convergence processes are drawn in Figures \ref{fig:fig4}--\ref{fig:fig6}.
Because of the monotonic decreasing property of residual norms obtained by LSMR when applied
to the normal equations of \eqref{eq:original problem} and \eqref{eq:sketched problem},
as is expected, LSMR
exhibited guaranteed monotonic convergence decrease; its convergence curves
demonstrate much smoother than those by LSQR, as is clearly seen
by comparing each of Figures \ref{fig:fig1}--\ref{fig:fig3} and the corresponding one of
Figures \ref{fig:fig4}--\ref{fig:fig6}. This justifies the general-purpose reliability and robustness of
the stopping criterion \eqref{stopcrit1} for LSMR, as we have addressed in \cref{stopdesign}.

\begin{figure}[htbp]
    \centering
    \includegraphics[width=7.5cm, height=6cm]{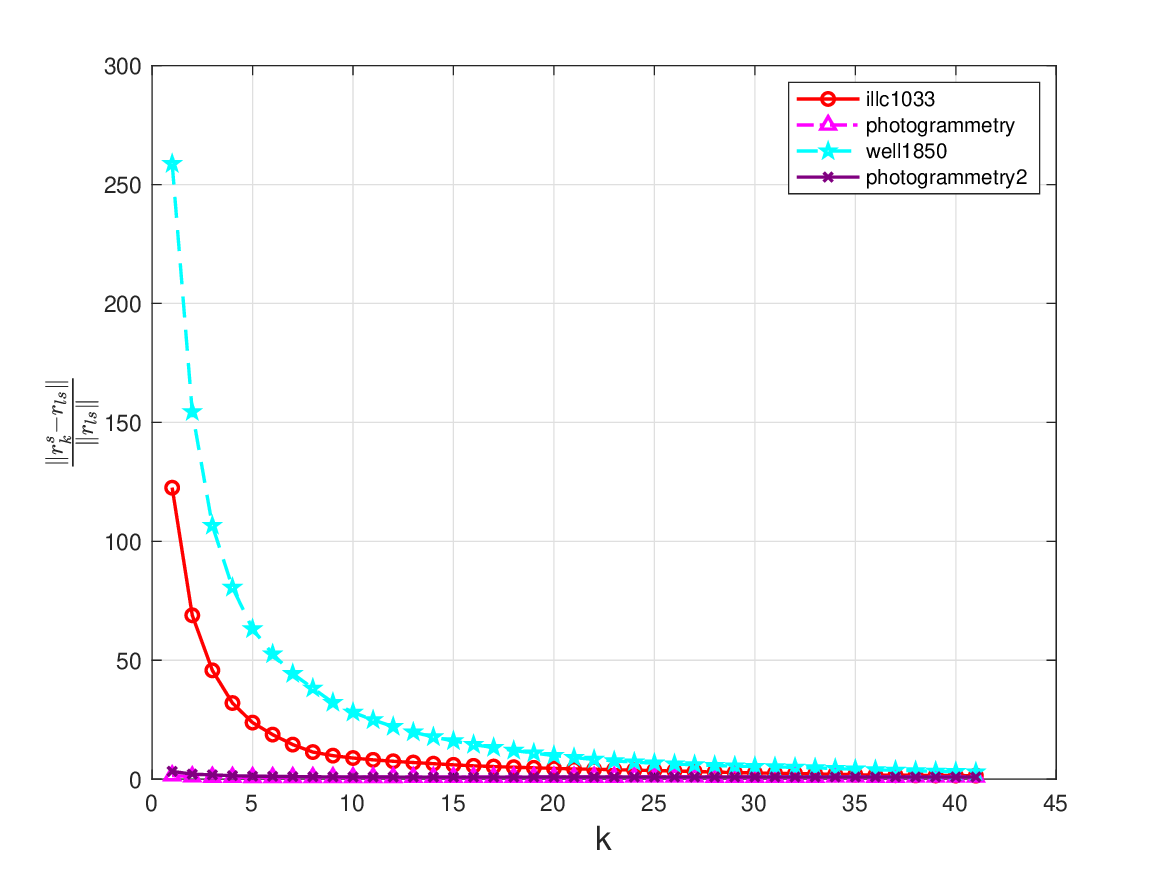} 
    \hfill
    \includegraphics[width=7.5cm, height=6cm]{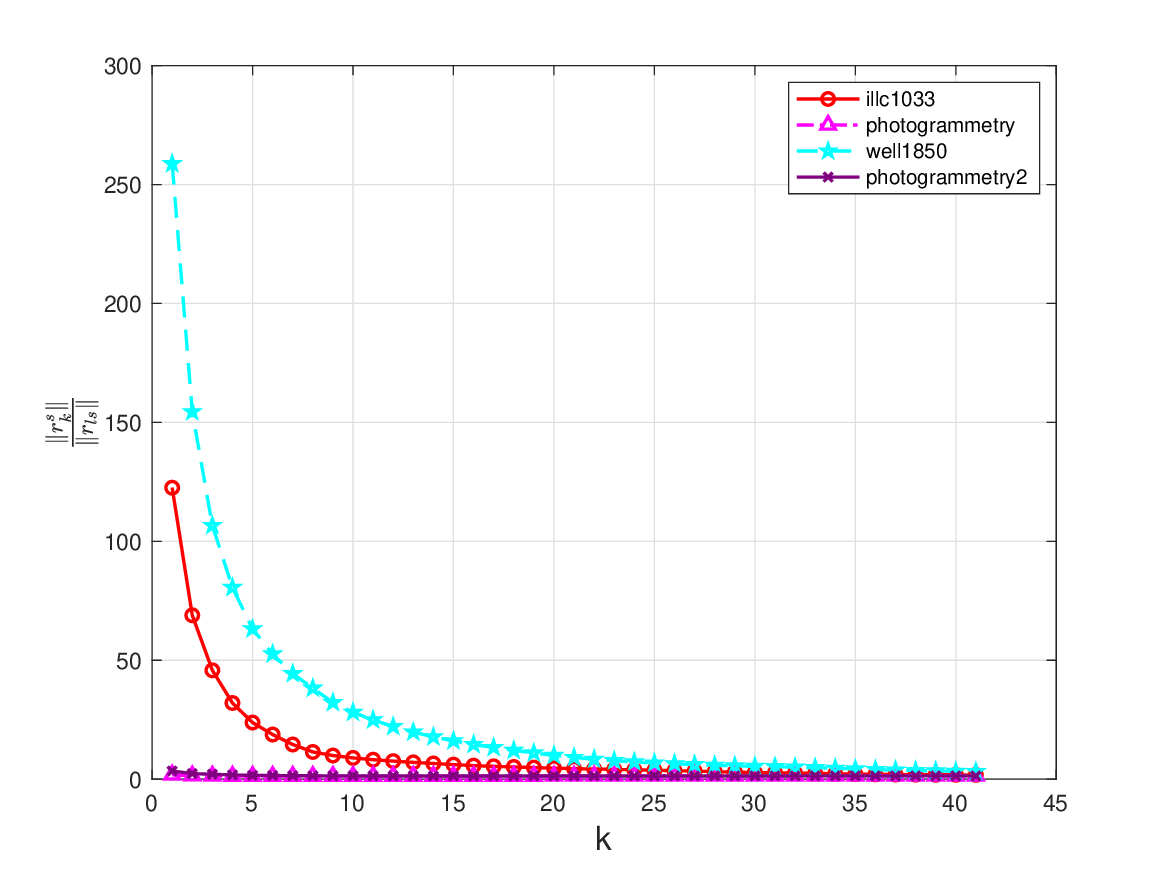}
    \caption{Residual norms $\|r_k^s\|$ using Gaussian embedding matrices and LSQR }\label{fig:fig7}
\end{figure}

\begin{figure}[htbp]
    \centering
    \includegraphics[width=7.5cm, height=6cm]{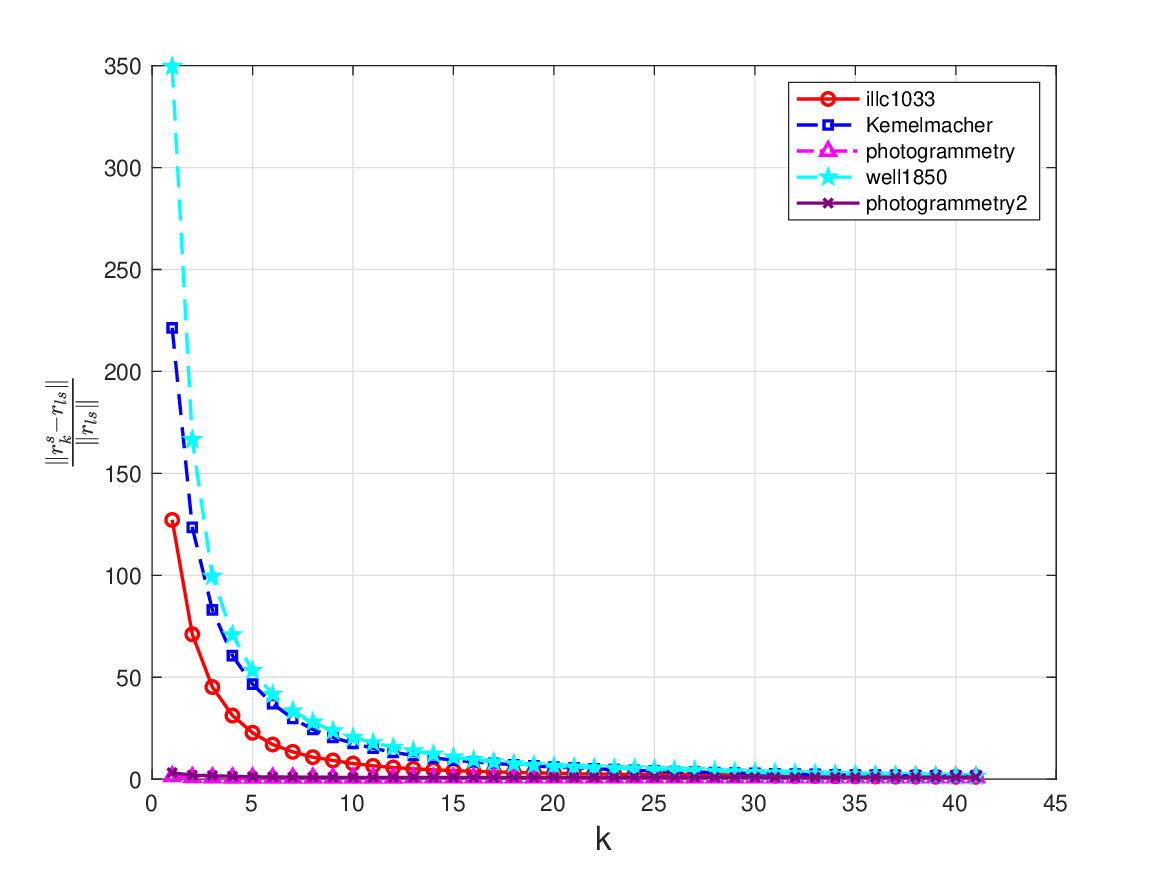} 
    \hfill
    \includegraphics[width=7.5cm, height=6cm]{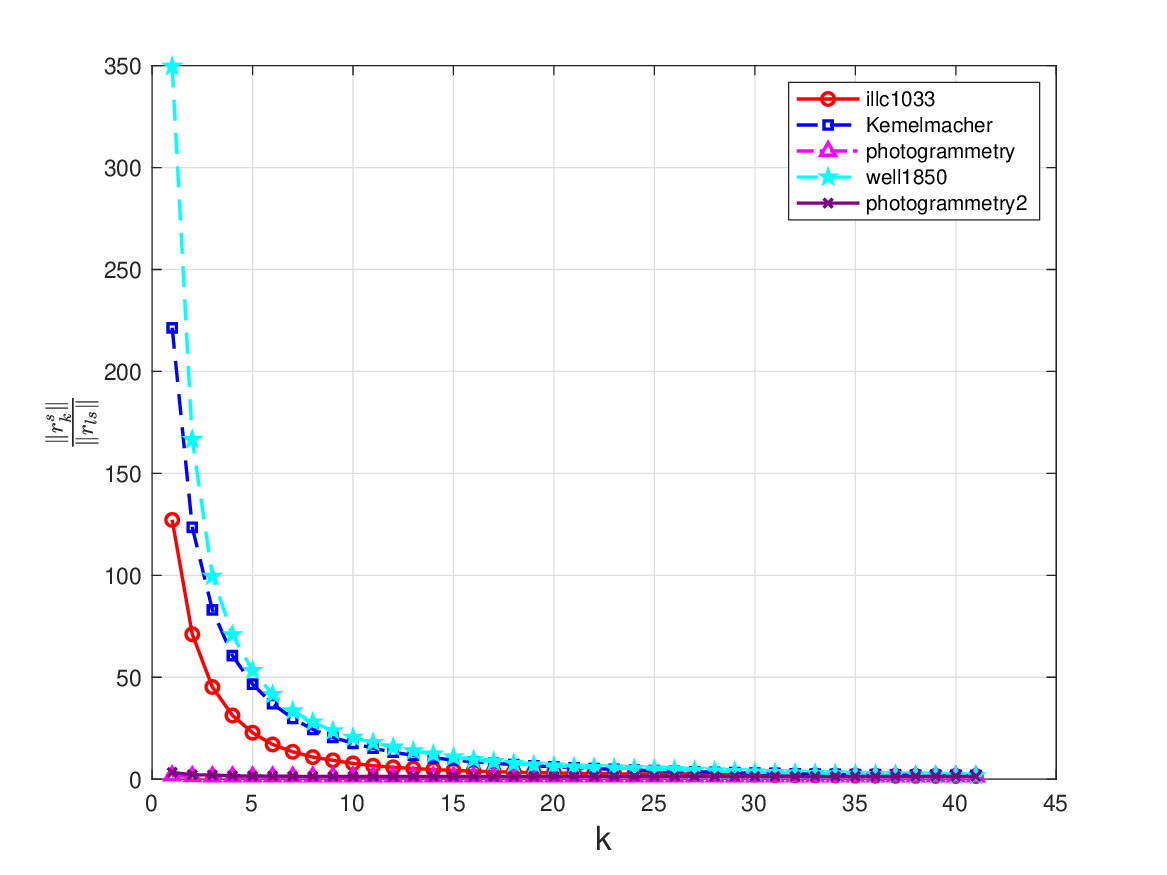}
    \caption{Residual norms $\|r_k^s\|$
   using SRHT embedding matrices and LSQR}\label{fig:fig8}
\end{figure}

\begin{figure}[htbp]
    \centering
    \includegraphics[width=7.5cm, height=6cm]{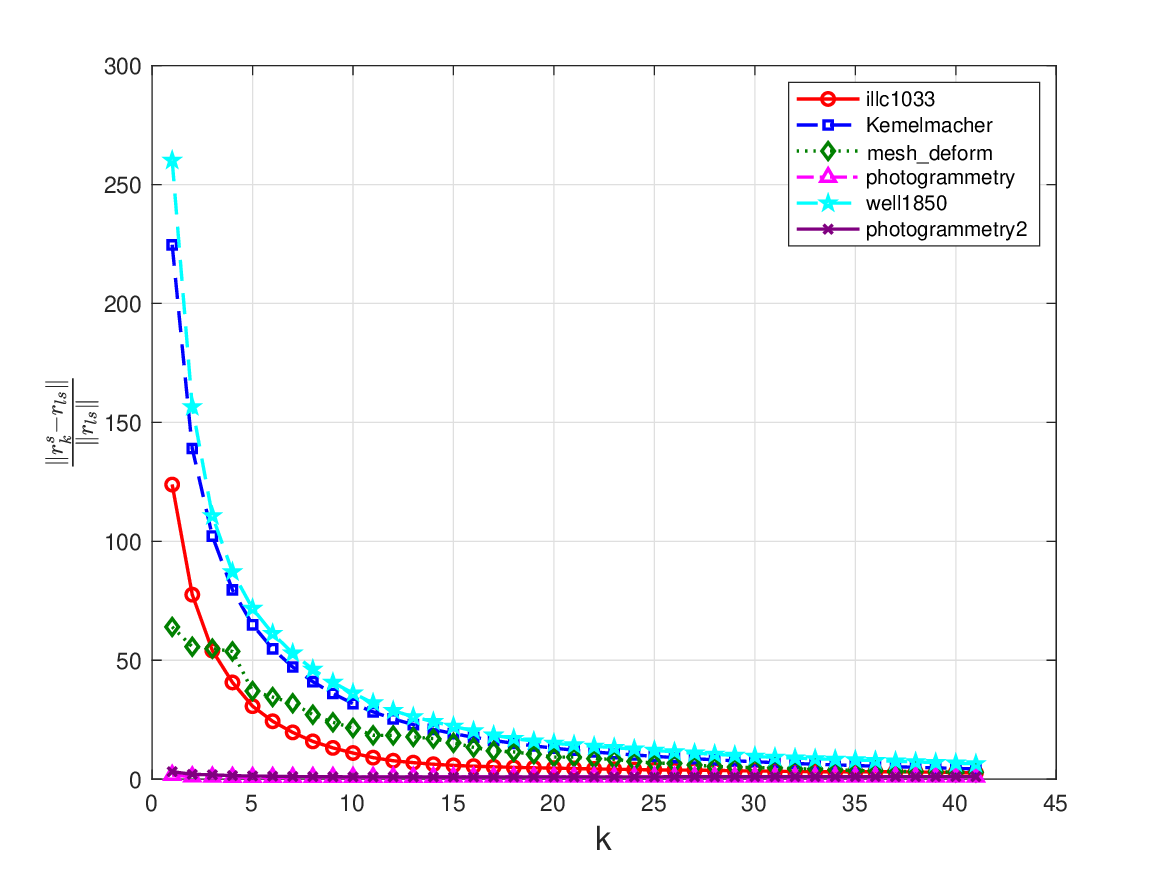} 
    \hfill
    \includegraphics[width=7.5cm, height=6cm]{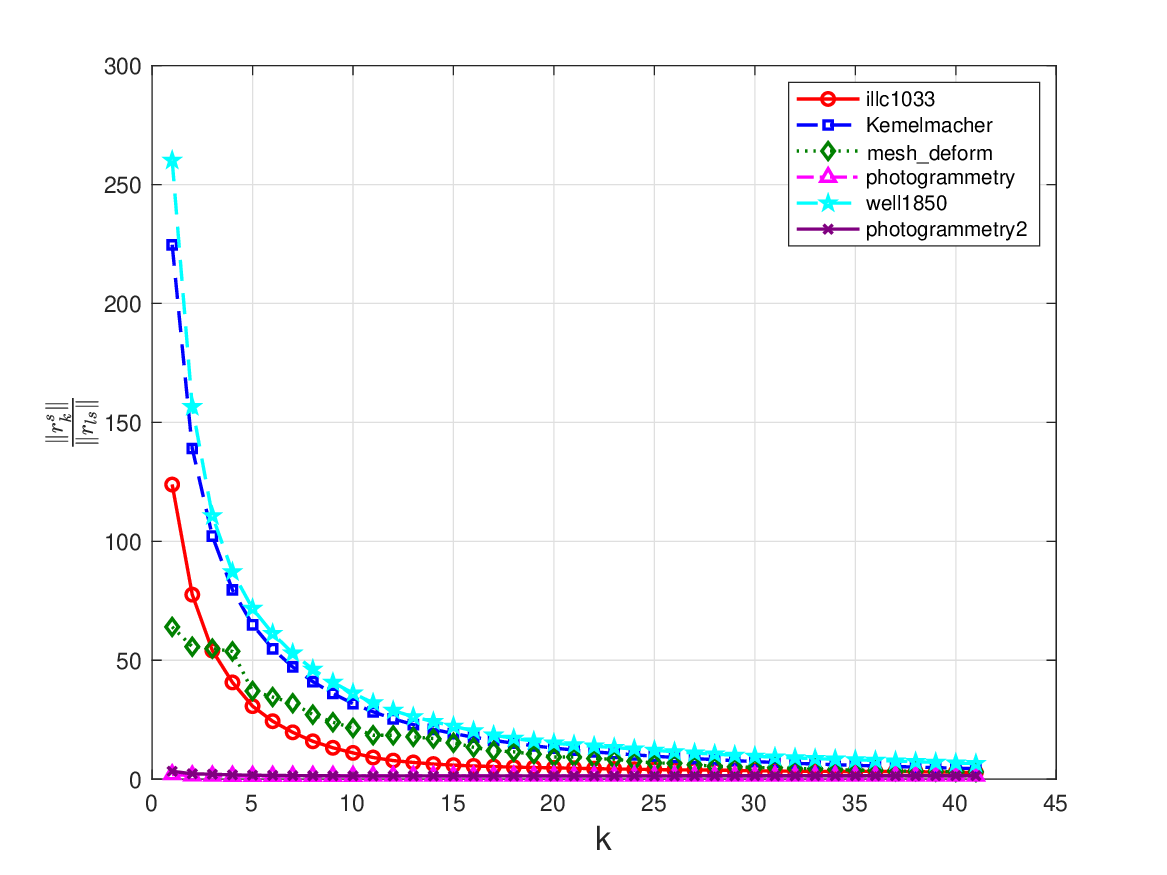}
    \caption{Residual norms $\|r_k^s\|$ using sparse embedding matrices and LSQR}\label{fig:fig9}
\end{figure}

\begin{figure}[htbp]
    \centering
    \includegraphics[width=7.5cm, height=6cm]{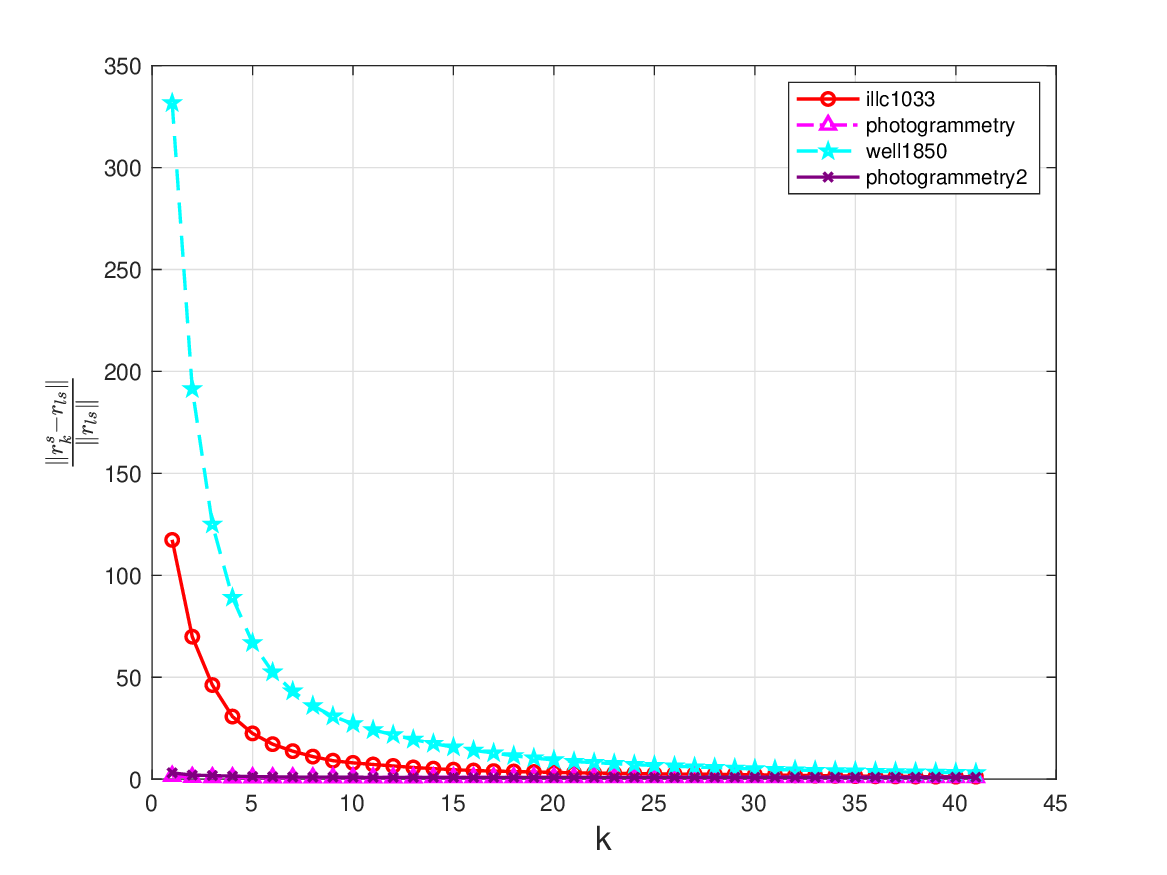} 
    \hfill
    \includegraphics[width=7.5cm, height=6cm]{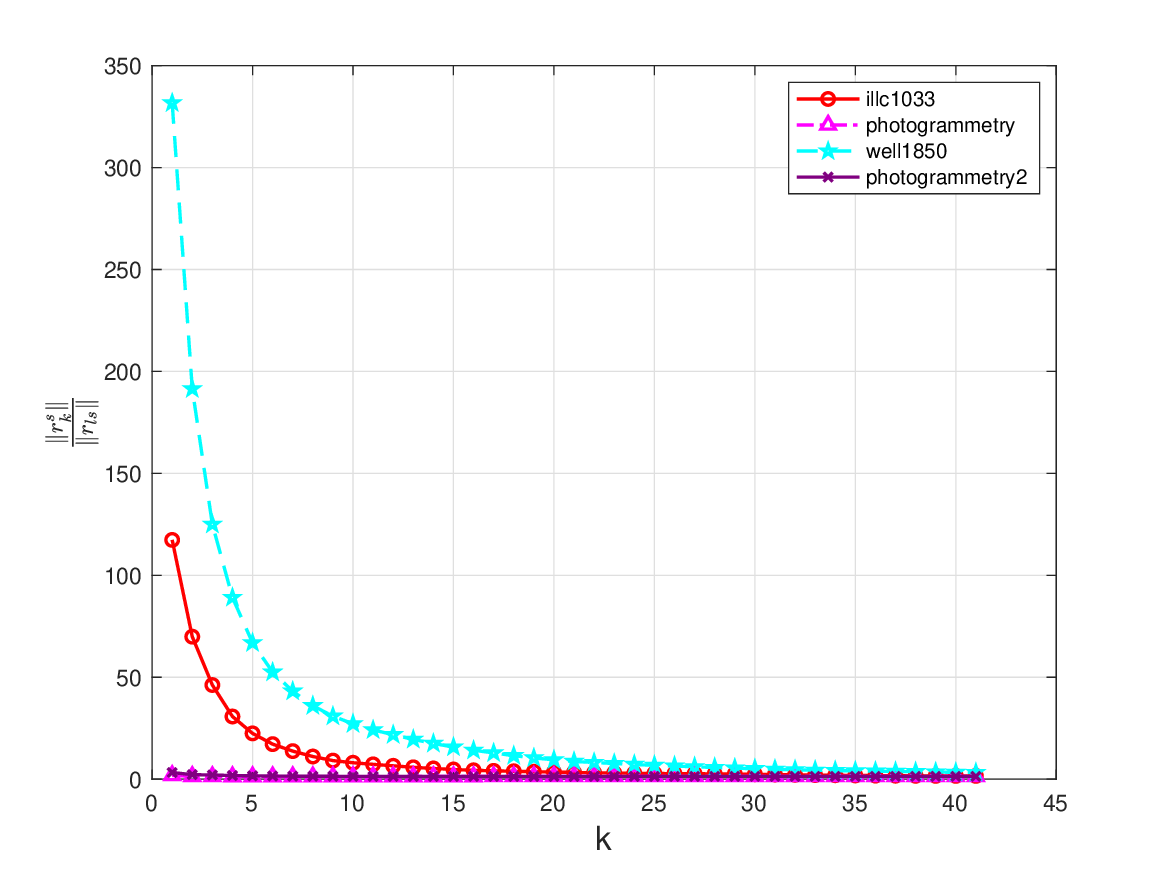}
    \caption{Residual norms $\|r_k^s\|$ using Gaussian embedding matrices and LSMR }\label{fig:fig10}
\end{figure}

\begin{figure}[htbp]
    \centering
    \includegraphics[width=7.5cm, height=6cm]{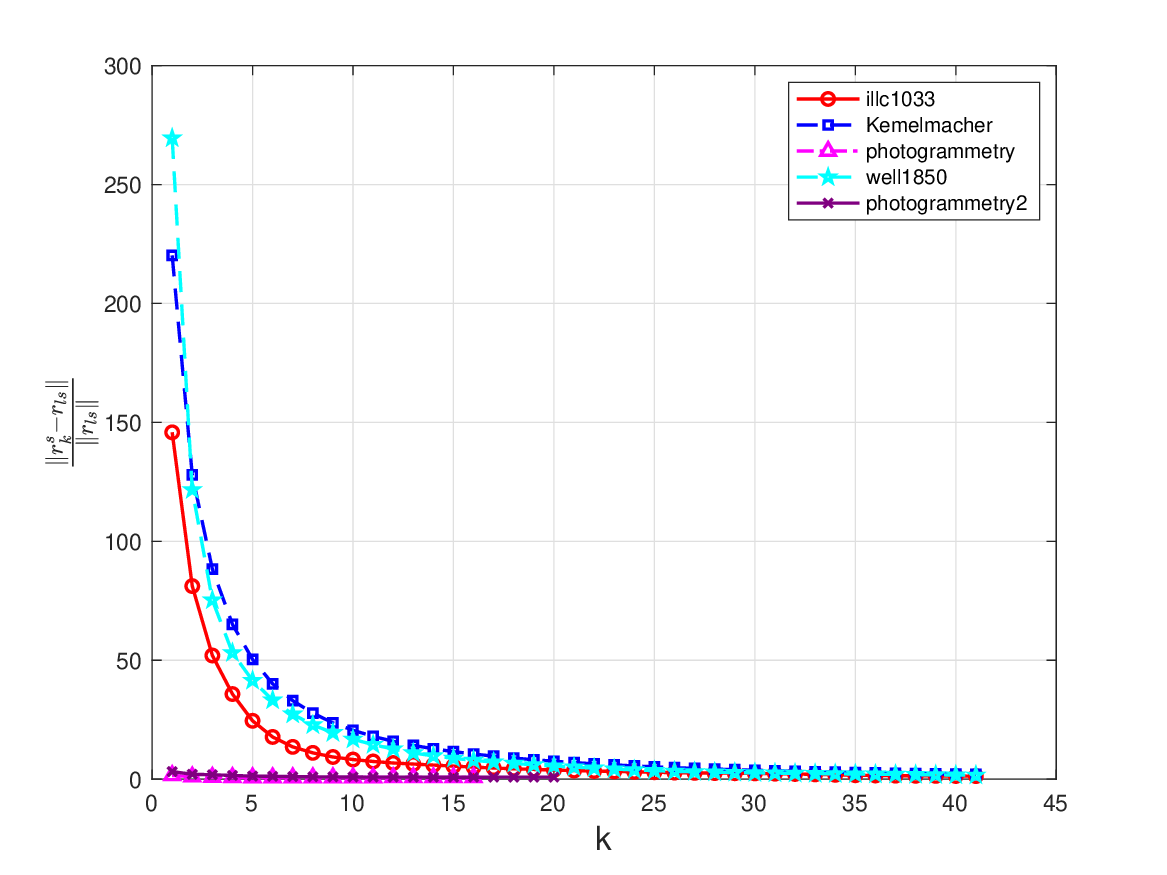} 
    \hfill
    \includegraphics[width=7.5cm, height=6cm]{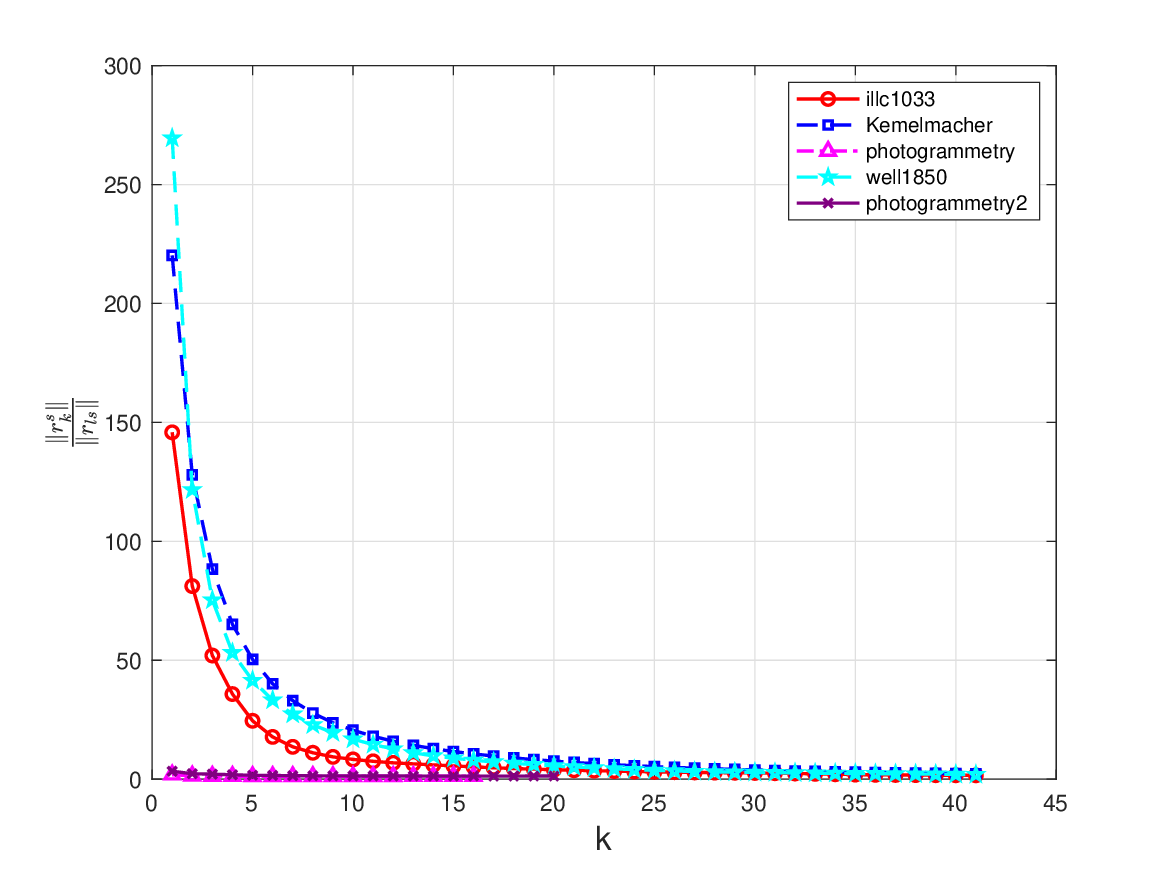}
    \caption{Residual norms $\|r_k^s\|$
   using SRHT embedding matrices and LSMR}\label{fig:fig11}
\end{figure}

\begin{figure}[htbp]
    \centering
    \includegraphics[width=7.5cm, height=6cm]{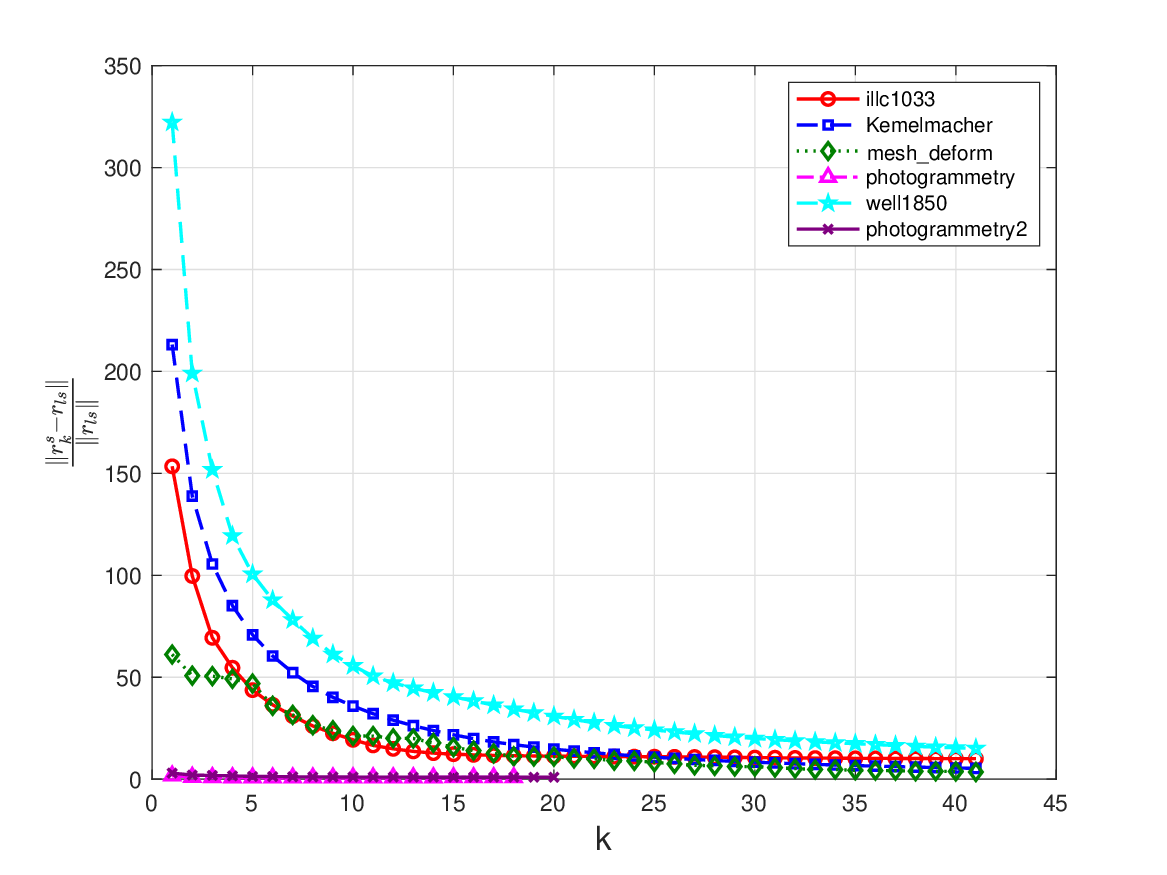} 
    \hfill
    \includegraphics[width=7.5cm, height=6cm]{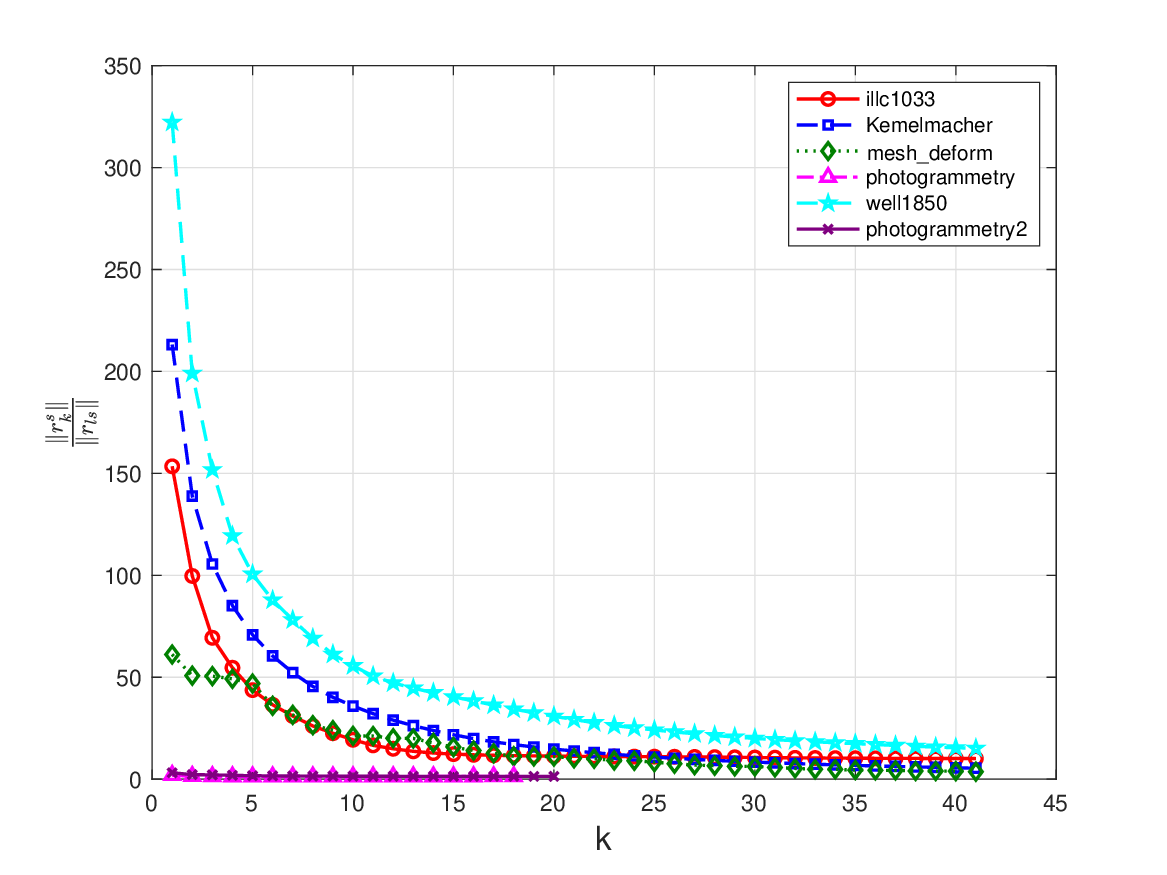}
    \caption{Residual norms $\|r_k^s\|$ using sparse embedding matrices and LSMR}\label{fig:fig12}
\end{figure}

\begin{figure}[htbp!]
    \centering
    \includegraphics[width=7.5cm, height=6cm]{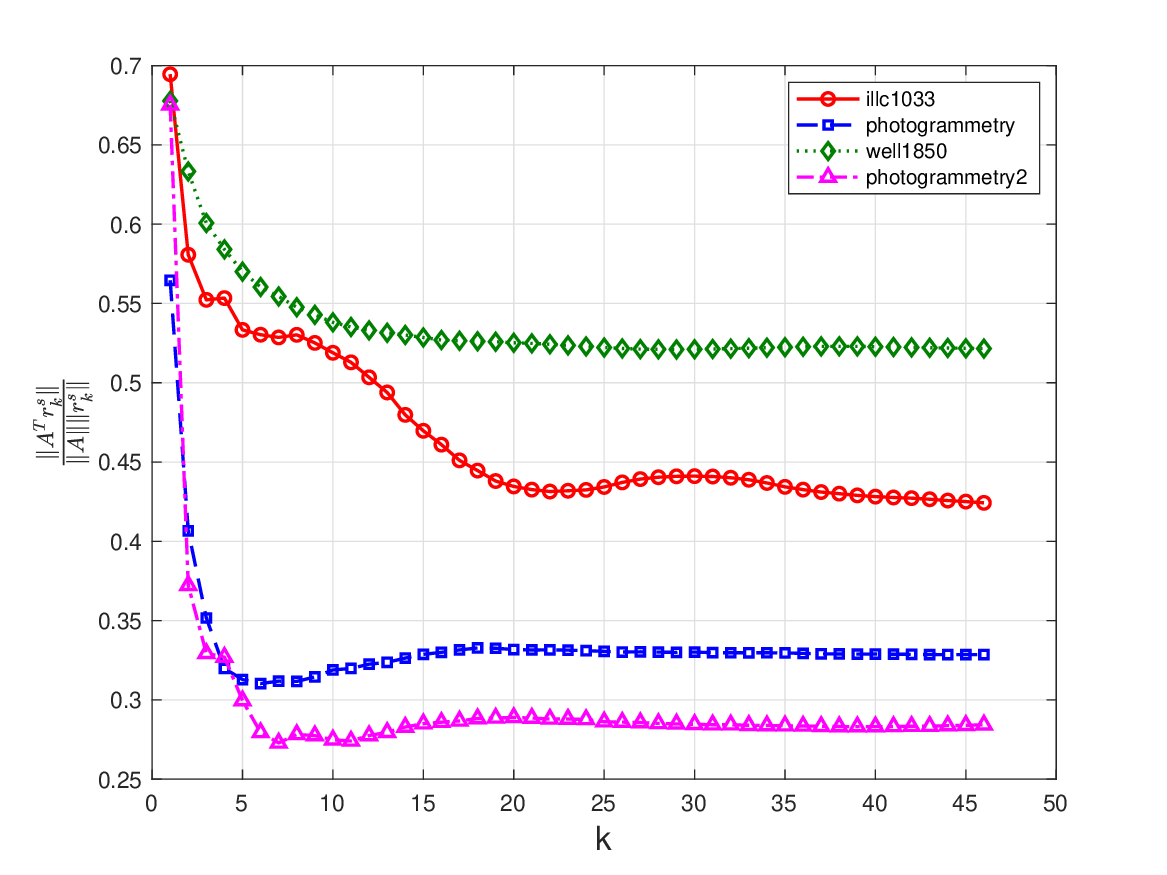} 
    \hfill
    \includegraphics[width=7.5cm, height=6cm]{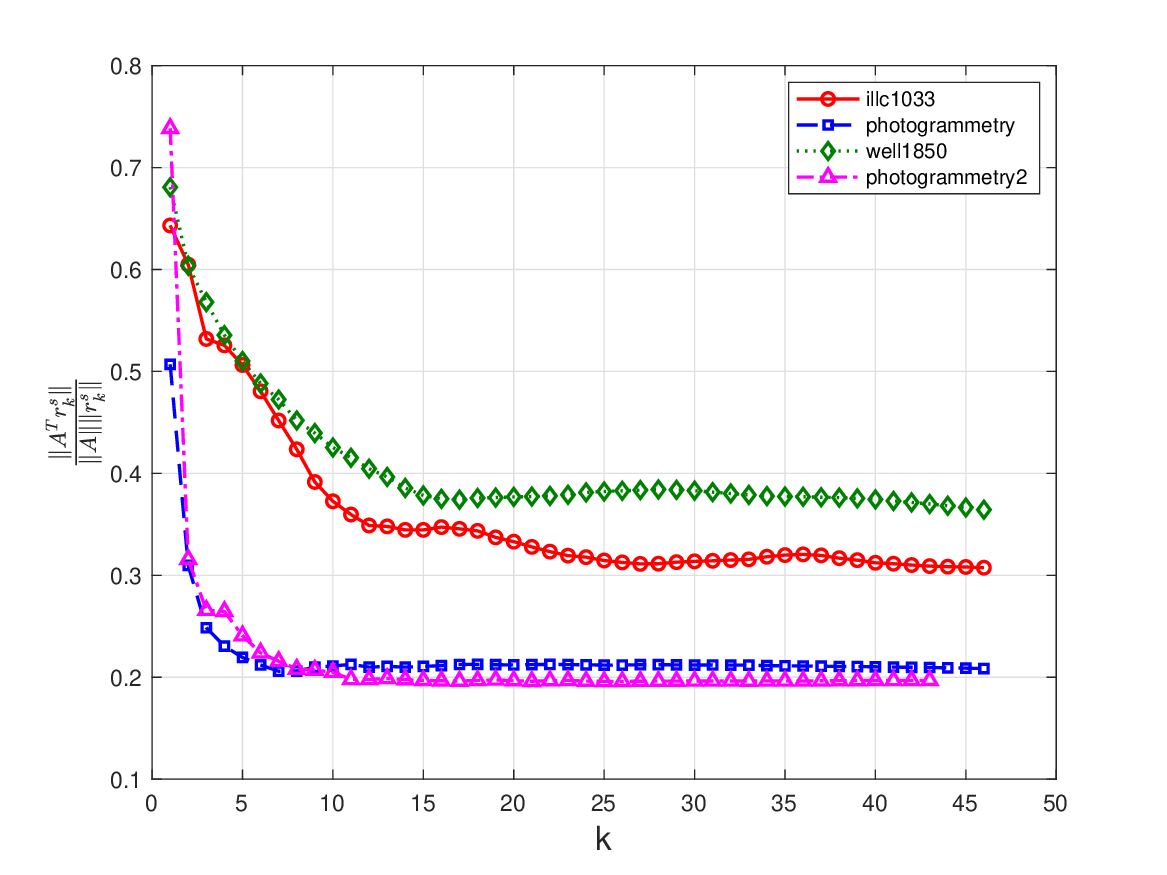}
    \caption{The convergence curves of LSMR for the problems with different rows $d$ using Gaussian embedding matrices}\label{fig:fig13}
\end{figure}

\begin{figure}[htbp!]
    \centering
    \includegraphics[width=7.5cm, height=6cm]{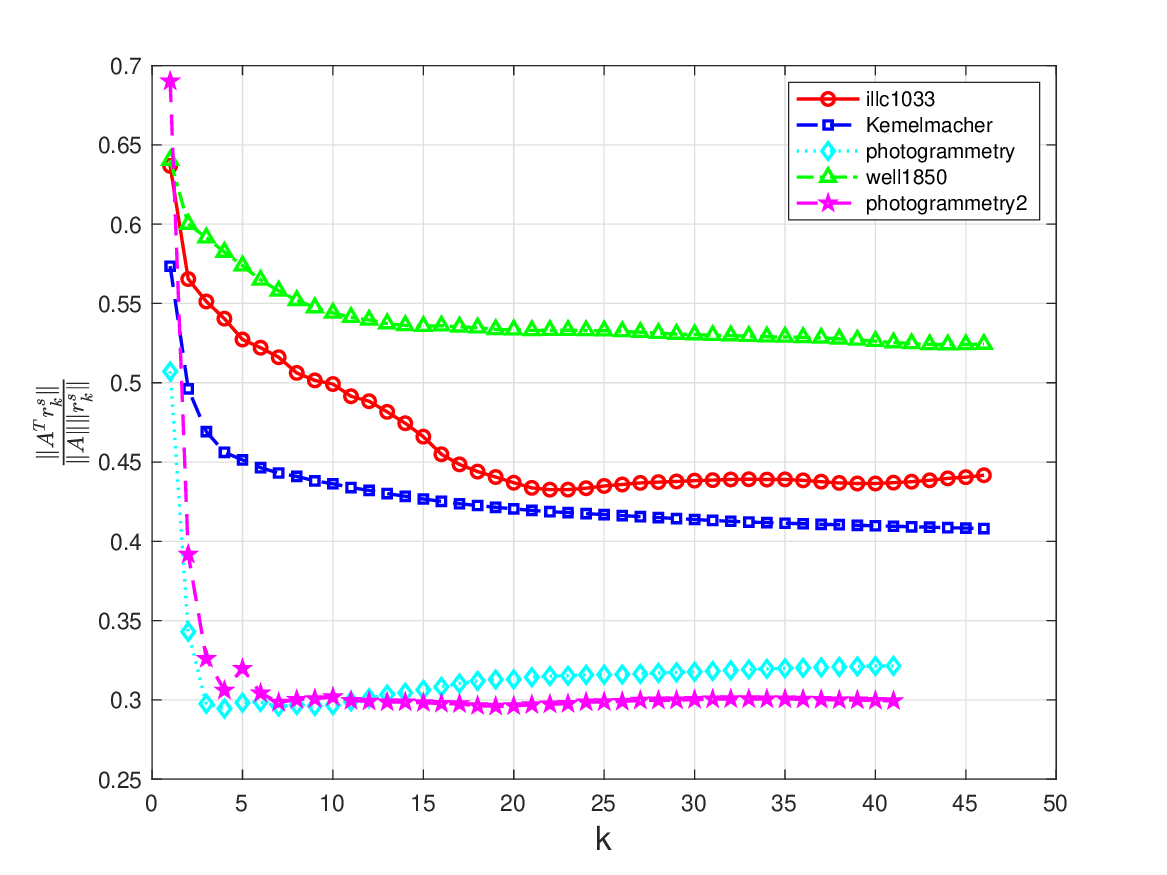} 
    \hfill
    \includegraphics[width=7.5cm, height=6cm]{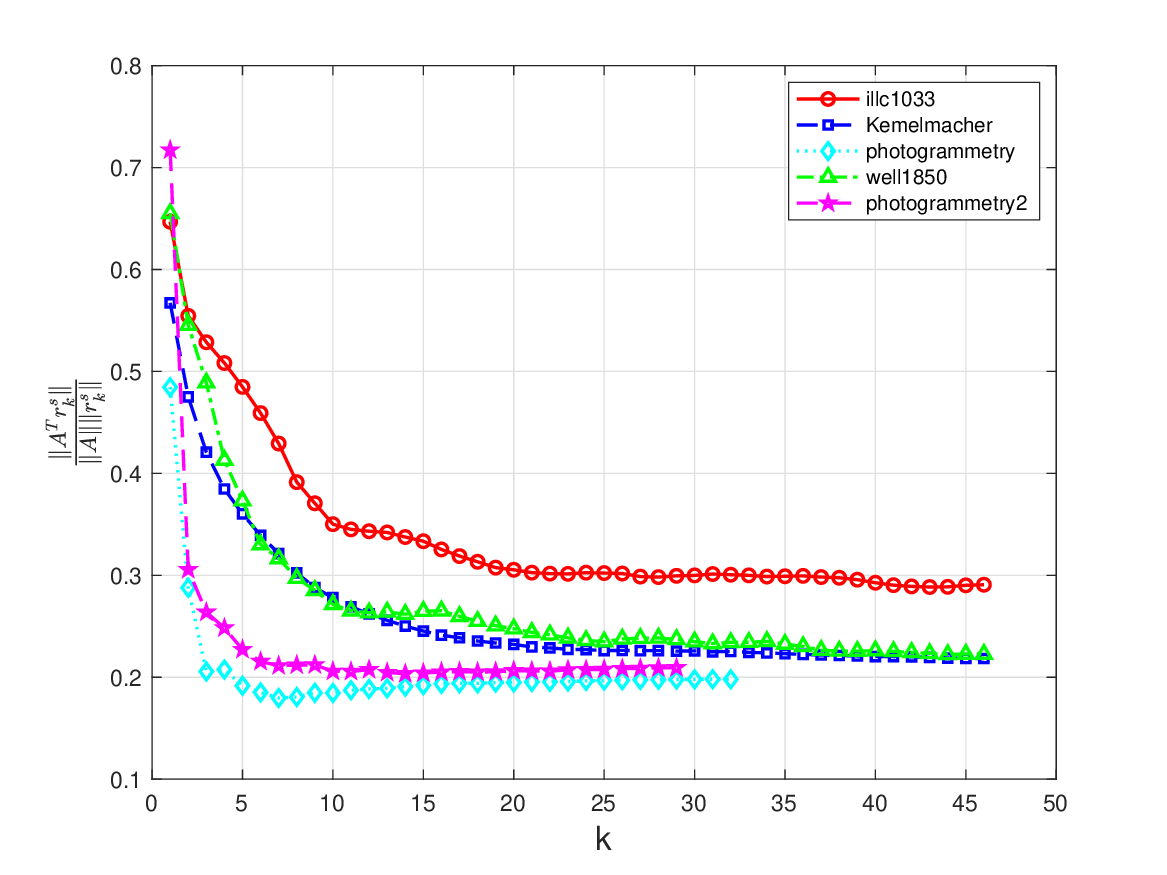}
    \caption{The convergence curves of LSMR for the problems with different rows $d$ using SRHT embedding matrices}\label{fig:fig14}
\end{figure}

\begin{figure}[htbp!]
    \centering
    \includegraphics[width=7.5cm, height=6cm]{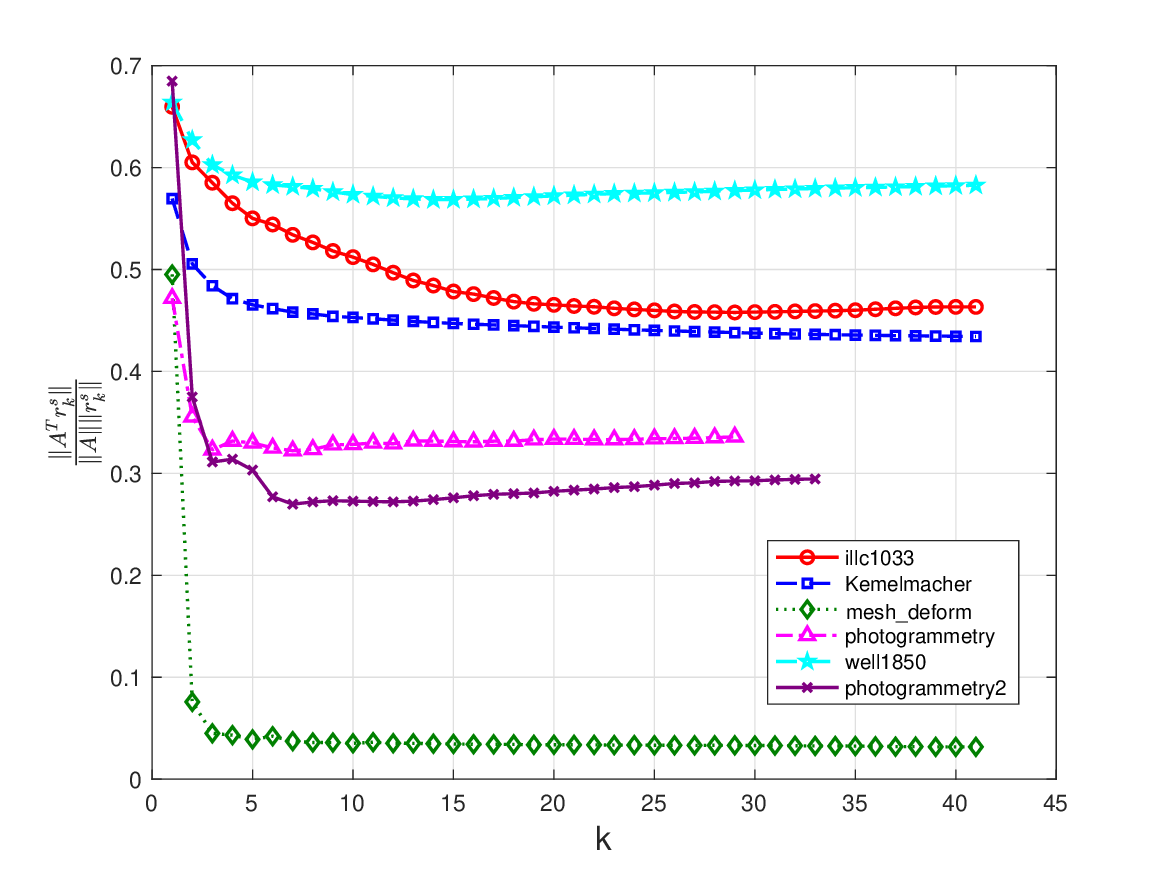} 
    \hfill
    \includegraphics[width=7.5cm, height=6cm]{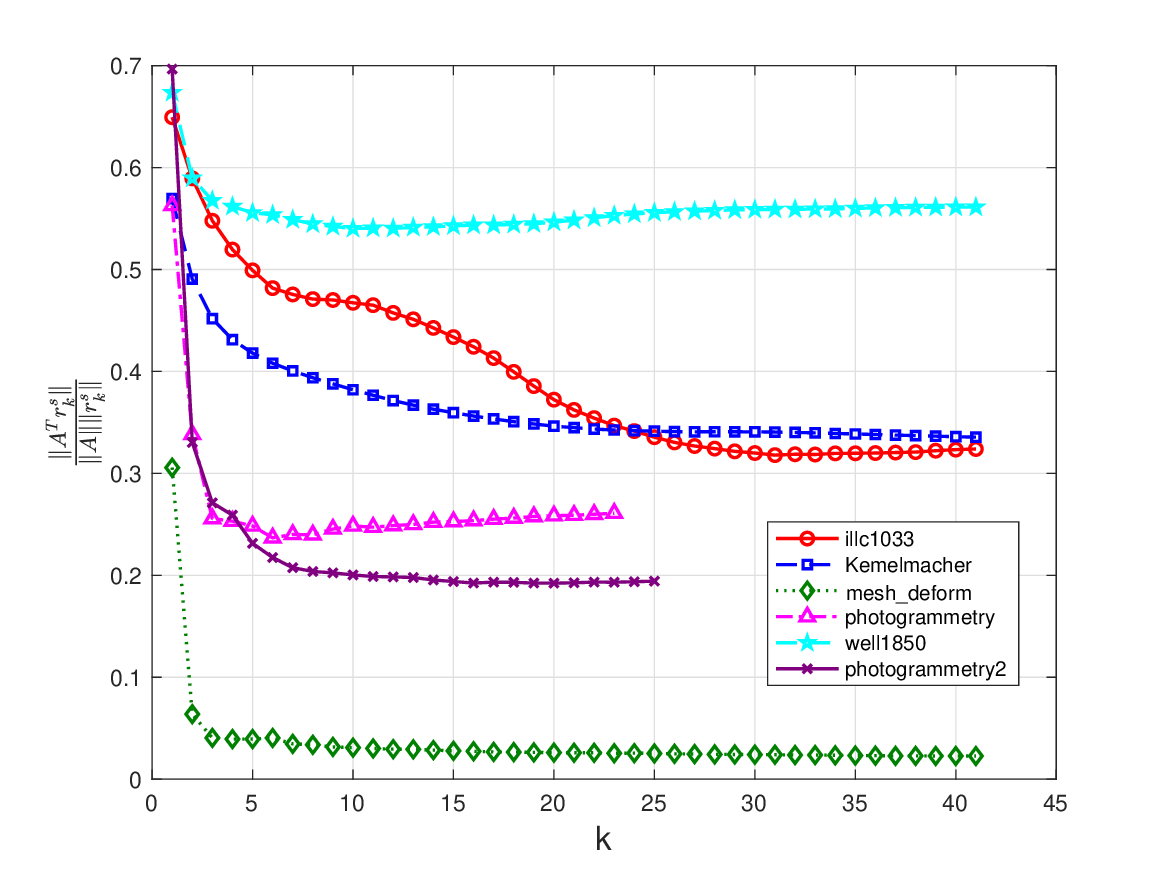}
    \caption{The convergence curves of LSMR for the problems with different rows $d$ using sparse embedding matrices}\label{fig:fig15}
\end{figure}

Now we investigate at the important quantities $\|r_k^s-r_{ls}\|/\|r_{ls}\|$ and $\|r_k^s\|/\|r_{ls}\|$. For
LSQR, recall from \cref{stopdesign} that $\|Sr_k^s\|$ monotonically decreases and converges to $\|Sr_s\|$
and thus
$$
\frac{\|r_k^s-r_{ls}\|}{\|r_{ls}\|}\rightarrow \frac{\|r_s-r_{ls}\|}{\|r_{ls}\|},
$$
which is bounded by $\sqrt{\frac{2\epsilon}{1-\epsilon}}$, as \Cref{thm:new} shows.
On the other hand, it is known from \eqref{eq: basic_inequality_c} that
$\|r_k^s\|$ overall decreases smoothly, so do $\frac{\|r_k^s-r_{ls}\|}{\|r_{ls}\|}$ and
$\frac{\|r_k^s\|}{\|r_{ls}\|}$, provided that $\epsilon$ is fairly small.
Bound \eqref{eq:basic foundation} indicates
that $\frac{\|r_k^s\|}{\|r_{ls}\|}$ ultimately stabilizes at one and is bounded from above by $\sqrt{\frac{1+\epsilon}{1-\epsilon}}$ at most. \Cref{fig:fig4,fig:fig5,fig:fig6} fully
confirm these theoretical results. Moreover, for each of the test problems,
by a careful comparison of \Cref{fig:fig10,fig:fig11,fig:fig12} and \Cref{fig:fig4,fig:fig5,fig:fig6},
we see that LSMR and LSQR stabilized approximately at the same iteration step, as is expected. That is,
LSMR and LSQR have solved \eqref{eq:original problem} by solving the sLS problem
\eqref{eq:sketched problem} and computed equally accurate optimal approximate solutions
of \eqref{eq:original problem}
when their own reliable stopping criteria \eqref{stopcrit1} and \eqref{stopcrit2} are used, respectively.
As shown in \Cref{fig:fig10,fig:fig11,fig:fig12}, when we use the LSMR method, the trend of  $\frac{\|r_k^s-r_{ls}\|}{\|r_{ls}\|}$ and
$\frac{\|r_k^s\|}{\|r_{ls}\|}$ still exhibit a smooth decline. However, for problems with extremely poor qualities, the curve may exhibit irregular decreasing behavior.

\Cref{fig:fig13,fig:fig14,fig:fig15} display the results with the
left figures corresponding to $d=1.2n$ and the right figures
corresponding to $d=2.4n$. All these results are obtained by using LSMR.

From \Cref{fig:fig13,fig:fig14,fig:fig15} and our recorded data, we find
that, when the row number $d$ of $S$ doubles, the corresponding relative residual norm decreases by
approximately a factor $\sqrt{2}$. These results justify the comments in \cref{subsec:compare}, \eqref{eq:normal_residual_upper_bound} and \Cref{thm:new}, which state that doubling $d$
approximately reduces
$\epsilon$ by $\sqrt{2}$ times.



\section{Conclusion}\label{sec:conclusions}

We have made a detailed and comprehensive numerical analysis on the sketch-and-solve paradigm.
In terms of the distortion $\epsilon$, we have
established the directional discrepancies $\|r_{ls}-r_s\|$,
and derived upper bounds for two metrics
that are the relative residual norm \eqref{eq:normal_residual_upper_bound} of the approximate solution
relative to the original LS problem \eqref{eq:original problem} and relative residual norm
\eqref{eq:corr_normal_residual_upper_bound} of the exact solution relative to the sLS problem
\eqref{eq:sketched problem}.
We have investigated backward errors of the approximate problems—a key metric for assessing the
stability of numerical approximations and established the sharp bounds for the minimal backward errors in
for the inconsistent and consistent LS problems. In the meantime, we have established error bounds
for the solution error $\|x_s-x_{ls}\|$ in terms of both the distortion $\epsilon$ and the residual
norm $\|r_{ls}\|$, and exposed an intrinsic relationship between them: For the same $\epsilon$,
the larger $\|r_{ls}\|$ is, the less accurate $x_s$ is as an
approximate solution of  the LS problem \eqref{eq:original problem}. These results reveal
the capabilities and limitations of the sketch-and-solve paradigm.

Based on the theoretical results and analysis, we have proposed two novel and general-purpose stopping criteria for iteratively solving the sLS problem to obtain an optimal approximate
solution of the original LS problem at the earliest iteration. They make an iterative solver such as
LSQR and LSMR terminate at right time. On the contrary to those traditional ones for a LS problem itself, the new stopping criteria make the iterations terminate much more early without sacrificing attainable accuracy of approximate solutions. Numerical experiments have confirmed all the theoretical results, and
justified the new stopping criteria and their separate safe use for LSQR and LSMR.


\section{Declarations}
The two authors declare that they have no conflict of interest,
and they read and approved the final manuscript.


\end{document}